\theoremstyle{plain}
\newtheorem{theorem}{Theorem}
\newtheorem{proposition}[theorem]{Proposition}
\newtheorem{lemma}[theorem]{Lemma}
\newtheorem{corollary}[theorem]{Corollary}
\theoremstyle{definition}
\newtheorem{definition}[theorem]{Definition}
\newtheorem{remark}[theorem]{Remark}
\newcommand{\id}{\mathrm{id}}
\tikzset{nomorepostaction/.code=\let\tikz@postactions\pgfutil@empty}
\DeclareRobustCommand{\SkipTocEntry}[5]{}
\newcommand{\eqrel}[2][B]{\mathrel{\eqmakebox[#1]{#2}}}
\title{An asymptotic homotopy lifting property}
\author[J. Carri\'on]{Jos\'e R.\ Carri\'on}
\address{Jos\'e R.\ Carri\'on, Department of Mathematics, Texas Christian
	University, Fort Worth, Texas 76129, USA}
\email{j.carrion@tcu.edu}
\author[C. Schafhauser]{Christopher Schafhauser}
\address{Christopher Schafhauser, Department of  Mathematics, University of Nebraska - Lincoln, Lincoln, Nebraska, USA}
\email{cschafhauser2@unl.edu}
\dedicatory{In memory of Eberhard Kirchberg}
\date{\today}
\begin{document}

\begin{abstract}
  A $C^*$-algebra $A$ is said to have the \emph{homotopy lifting property} if for all $C^*$-algebras $B$ and $E$, for every surjective $^*$-homomorphism $\pi \colon E \rightarrow B$ and for every $^*$-homomorphism $\phi \colon A \rightarrow E$, any path of $^*$-homomorphisms $A \rightarrow B$ starting at $\pi \phi$ lifts to a path of $^*$-homomorphisms $A \rightarrow E$ starting at $\phi$.
  Blackadar has shown that this property holds for all semiprojective $C^*$-algebras.
	
  We show that a version of the homotopy lifting property for asymptotic morphisms holds for separable $C^*$-algebras that are sequential inductive limits of se\-mi\-pro\-ject\-ive $C^*$-algebras.
  It also holds for any separable $C^*$-algebra if the quotient map $\pi$ satisfies an approximate decomposition property in the spirit of (but weaker than) the notion of quasidiagonality for extensions.
\end{abstract}

\maketitle

\addtocontents{toc}{\SkipTocEntry}
\section*{Introduction}\label{sec:introduction}
\renewcommand{\thetheorem}{\Alph{theorem}}
	
A pair $(A, \pi)$ consisting of a $C^*$-algebra $A$ and a surjective $^*$-homomorphism $\pi \colon E \rightarrow B$ between $C^*$-algebras has the \emph{homotopy lifting property} if
we have the following path-lifting property along $\pi$: if  $\phi \colon A \rightarrow B$ is a $^*$-homomorphism that lifts to a  $^*$-homomorphism $\tilde \phi \colon A \rightarrow E$, then any continuous path $\theta$ of $^*$-homomorphisms $A \rightarrow B$ starting at $\phi$ lifts to a continuous path $\tilde\theta$ of $^*$-homomorphisms $A \rightarrow E$ starting at $\tilde \phi$.
In other words,
the diagram completion problem
\begin{equation}\label{eq:intro-diagram}
\begin{tikzcd}
  A \arrow[dashed]{dr}{\tilde\theta}
    \arrow[bend right]{ddr}[swap]{\theta}
    \arrow[bend left]{drr}{\tilde\phi}
  &[-10pt] &[10pt] \\[-5pt] 
  & IE \arrow{d}[swap]{I\pi} \arrow{r}{{\rm ev}_0^E} & E
    \arrow{d}{\pi} \\[10pt] 
  & IB \arrow{r}[swap]{{\rm ev}_0^B} & B 
\end{tikzcd}
\end{equation}
always has a solution, where we write $ID = C([0, 1], D)$ for any $C^*$-algebra $D$.

The homotopy lifting property and its dual \emph{homotopy extension property} play a fundamental role in homotopy theory.
A classical result of Borsuk \cite{Borsuk37} (in its dual form) states that if $X$ is a (compact Hausdorff) absolute neighborhood retract, then $(C(X), \pi)$ satisfies a restricted form of the homotopy lifting property where one requires the $C^*$-algebras $B$ and $E$ above to be commutative.
This holds, for example, when $X$ is a finite CW-complex.

The analogs of absolute neighborhood retracts in the noncommutative setting are the semiprojective $C^*$-algebras.
Blackadar proved an analog of Borsuk's homotopy extension theorem in \cite{Blackadar16}, showing that if $A$ is a semiprojective $C^*$-algebra, then $(A, \pi)$ has the homotopy lifting property for all $\pi$.

Most ``naturally occurring'' noncommutative $C^*$-algebras (such as simple $C^*$-algebras) are highly singular from a topological point of view, due to the abundance of inner automorphisms, yielding intricate internal dynamics on the $C^*$-algebra.
Consequently, there are typically very few maps (i.e., $^*$-homomorphisms) between two non-commutative $C^*$-algebras.
There are several natural ways of enlarging the collection of morphisms between $C^*$-algebras.
One common approach uses the theory of asymptotic morphisms of Connes and Higson \cite{Connes-Higson90}, which was central in the development of $E$-theory.
If $A$ and $B$ are $C^*$-algebras with $A$ separable, an \emph{asymptotic morphism} $\phi \colon A \xrightarrow\approx B$ is a point-norm continuous family of functions $\phi_t \colon A \rightarrow B$, indexed by $t \in \mathbb R_+$, that satisfies the properties of a $^*$-homomorphism in the limit as $t \rightarrow \infty$.

The present paper is devoted to an asymptotic version of the homotopy lifting property for $C^*$-algebras.
This stemmed from our recent work in \cite{top-e}, which introduces a topology on $E$-theory.
A pair $(A, \pi)$ consisting of a separable $C^*$-algebra $A$ and a surjective $^*$-homomorphism $\pi \colon E \rightarrow B$ between $C^*$-algebras $B$ and $E$ has the \emph{asymptotic homotopy lifting property} if the diagram completion problem in \eqref{eq:intro-diagram} always has a solution, where each arrow out of $A$ is interpreted as an asymptotic morphism and the diagram is required to commute in the limit as $t \rightarrow \infty$. (See Definition~\ref{def:ahlp}.)

The asymptotic homotopy lifting property is far less restrictive than the classical version.
In fact, we do not know of any pair $(A, \pi)$ for which it fails.
In the positive direction, we obtain the following result.

\begin{theorem}\label{thm:ahlp-asp}
  If $A$ is a (sequential) inductive limit of semiprojective $C^*$-algebras and $\pi \colon E \rightarrow B$ is a surjective $^*$-homomorphism between $C^*$-algebras $B$ and $E$, then $(A, \pi)$ satisfies the asymptotic homotopy lifting property.
\end{theorem}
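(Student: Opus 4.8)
The plan is to run an inductive construction along the system $A = \varinjlim(A_k, \iota_k)$, with $\kappa_k \colon A_k \to A$ the canonical maps, combining two consequences of semiprojectivity of each $A_k$: Blackadar's (classical) homotopy lifting property \cite{Blackadar16}, and the fact that a semiprojective $C^*$-algebra is finitely presented and weakly semiprojective, hence has \emph{stable relations}: for every finite $F \subseteq A_k$ and $\varepsilon > 0$ there are a finite $G \subseteq A_k$ and $\delta > 0$ such that every $(G,\delta)$-multiplicative contraction from $A_k$ into an arbitrary $C^*$-algebra is within $\varepsilon$ on $F$ of a genuine $^*$-homomorphism, and any two $^*$-homomorphisms out of $A_k$ that are $(G,\delta)$-close are joined by an arbitrarily short homotopy of $^*$-homomorphisms. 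A small device used throughout is that parameters may be absorbed into the coefficient algebra: for a compact interval $K$, the map $C(K,\pi)\colon C(K,E)\to C(K,B)$ is surjective, so a single application of Blackadar's property for $A_k$ to this surjection lifts a $K$-parametrised path of $^*$-homomorphisms $A_k\to IB$ (rel the endpoint $0$) to a $K$-parametrised path $A_k\to IE$ all at once. Finally, recall that an asymptotic morphism is determined by — and may be prescribed on — a dense $^*$-subalgebra.

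It is clarifying to treat first the case that $A$ itself is semiprojective, which already exhibits the main mechanism. Fix an increasing exhausting sequence of finite sets $F_k\subseteq A$ and tolerances $\varepsilon_k\downarrow 0$. Given the data of Definition~\ref{def:ahlp} — an asymptotic morphism $\phi\colon A\xrightarrow\approx E$ and an asymptotic morphism $\theta\colon A\xrightarrow\approx IB$ with $\ev_0^B\circ\theta$ asymptotically equal to $\pi\circ\phi$ — choose $1 = T_0 < T_1 < \cdots \to\infty$ so rapidly that on each block $[T_k,T_{k+1}]$ the families $\phi$ and $\theta$ are multiplicative enough for the stability estimates to run with error $\varepsilon_k$. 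Regarding $\theta$ restricted to the $k$-th block as an approximately multiplicative contraction $A\to C([T_k,T_{k+1}]\times[0,1],B)$, stability of the relations replaces it by a genuine $^*$-homomorphism; adjacent replacements agree on the overlaps up to $\varepsilon_k$, so joining them by short homotopies yields a genuine path of $^*$-homomorphisms $\Theta\colon A\to IB$ that is asymptotically equal to $\theta$, and likewise a genuine path $\Phi\colon A\to E$ asymptotically equal to $\phi$; a final adjustment of $\Theta$ near the parameter $0$ makes $\ev_0^B\circ\Theta = \pi\circ\Phi$ exactly. Now, block by block and relative to the right endpoint of the previous block, Blackadar's property (with the block parameter absorbed into the coefficients) lifts $\Theta$ to a genuine path of $^*$-homomorphisms $\tilde\Theta\colon A\to IE$ with $I\pi\circ\tilde\Theta = \Theta$ and $\ev_0^E\circ\tilde\Theta = \Phi$. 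Viewed as an asymptotic morphism, $\tilde\Theta$ solves the lifting problem.

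In general $\theta$ cannot be rigidified on all of $A$, only on each semiprojective $A_k$, so we run the block construction simultaneously over the inductive system: at stage $k$ it produces, on the $k$-th time block, a genuine path of $^*$-homomorphisms $A_k\to IE$ lifting $\theta\circ\kappa_k$ and restricting correctly at $0$, up to error $\varepsilon_k$. Passing from stage $k$ to stage $k+1$ is where strict homotopy lifting fails: the rigidifications of $\theta$ on $A_k$ and on $A_{k+1}$ agree only approximately along $\iota_k$, and a $^*$-homomorphism $A_k\to IE$ need not extend along $\iota_k$, so the stagewise data cannot be made literally compatible. One therefore insists only on approximate compatibility, with the errors $\varepsilon_k\to 0$ propagated down the recursion, and assembles the stagewise data into a single asymptotic morphism $\tilde\theta\colon A\xrightarrow\approx IE$ by a reindexing argument (equivalently, through the mapping telescope of the system), using stability of the relations to absorb the stage-to-stage mismatches into short transition intervals. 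Since $I\pi\circ\tilde\theta$ and $\ev_0^E\circ\tilde\theta$ then agree with $\theta$ and $\phi$ up to errors tending to $0$, they are asymptotically equal to them, which is exactly what the asymptotic homotopy lifting property demands.

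The main obstacle is precisely this assembly: coaxing the solutions on the semiprojective pieces to cohere — across both the block decomposition and the levels of the inductive system — into a genuine asymptotic morphism out of $A$, while simultaneously maintaining the two-sided compatibility (with the path $\theta$ and with the endpoint datum $\phi$) and controlling the accumulated errors. It is also here that the necessity of asymptotic, rather than strict, lifting becomes transparent: the rigidifications on the pieces $A_k$ cannot be chosen coherently over the whole system at once — were they, $A$ would inherit Blackadar's strict homotopy lifting property, which sequential inductive limits of semiprojective $C^*$-algebras need not enjoy — so a coherent lift can be found only in the limit, that is, as an asymptotic morphism.
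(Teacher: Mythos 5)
Your overall strategy---rigidify the asymptotic data on the semiprojective pieces using stability of relations, lift block-by-block with Blackadar's homotopy lifting property, and control errors that tend to zero---is the same strategy the paper follows (there the rigidification is packaged as Dadarlat's diagrammatic representations, and the blockwise lifts are the squares in \eqref{eq:AE-picture}). The problem is that the step you yourself identify as ``the main obstacle,'' namely making the stagewise lifts cohere, is exactly the content of the paper's proof (Lemma~\ref{lem:somehow} plus the proof of Theorem~\ref{thm:ahlp}), and your proposal defers it rather than carrying it out. Two concrete issues. First, each block lift has to satisfy \emph{three} boundary conditions simultaneously: it must push forward to $\theta$ under $I\pi$, restrict to $\tilde\phi$ on the $\mathrm{ev}_0$-edge, and agree with the previous block's lift on the interface edge (otherwise the assembled family is not even continuous in $t$). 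A single application of Blackadar's property to $C(K,\pi)$ ``rel the endpoint $0$,'' as you describe, enforces the $\mathrm{ev}_0$-edge condition but not the interface matching; enforcing both requires lifting with prescribed values on an L- or U-shaped subset of $\partial I^2$, i.e.\ the corner observation of Lemma~\ref{lem:X} (that $(I^2,X)\cong(I^2,I\times\{0\})$, so this is again an instance of the homotopy lifting property, now for $(A_k,I\pi)$). Your sketch never confronts this, and as written the blockwise lifts would fail to glue.

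Second, Blackadar's theorem needs \emph{exactly} compatible input: a genuine homomorphism into the parametrized $IB$ together with a genuine lift of its $\mathrm{ev}_0$-part. After you rigidify $\theta\kappa_k$ and $\tilde\phi\kappa_k$ separately via stable relations, the compatibility $\mathrm{ev}_0^B\Theta=\pi\Phi$ holds only approximately, and your ``final adjustment near the parameter $0$'' is asserted, not constructed; since you cannot interpolate between nearby $^*$-homomorphisms without another semiprojectivity input, and since the inserted correction homotopies must have diameters tending to $0$ for the asymptotic equivalences $(I\pi)\tilde\theta\cong\theta$ and $\mathrm{ev}_0^E\tilde\theta\cong\tilde\phi$ to survive, this is a genuine piece of work --- in the paper it is handled by lifting with a prescribed quotient (Proposition~\ref{prop:lift-prescribed-quotient}), the quantitative homotopy statement with exact control in the quotient (Corollary~\ref{cor:wishlist}), and the summable-error and reparametrization bookkeeping in Lemma~\ref{lem:somehow} needed to check, via Lemma~\ref{lem:close-diagrams}, that the corrected data still represents $\tilde\phi$ and $\theta$. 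Likewise, ``assemble by a reindexing argument through the mapping telescope, absorbing mismatches into short transition intervals'' is a description of Dadarlat's homotopy limit construction (Definition~\ref{def:diag-rep}) rather than a proof that the assembled object is an asymptotic morphism with the two required asymptotic compatibilities. (A small separate point: your closing claim that inductive limits of semiprojective $C^*$-algebras ``need not enjoy'' the strict homotopy lifting property is not something you establish, and it is not needed.) So the proposal is the right plan, but the theorem's actual difficulty lies in the steps you have left schematic.
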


A long-standing open question of Blackadar asks if every separable $C^*$-algebra is an inductive limit of semiprojective $C^*$-algebras.
Part of the difficulty of this question arises from the fact that there is no clear obstruction to the existence of such an inductive limit decomposition.
For instance, it is known that every separable $C^*$-algebra is an inductive limit of $C^*$-algebras with semiprojective connecting maps.
Since most properties of semiprojective $C^*$-algebras admit relative versions for semiprojective morphisms, proving that a property holds for inductive limits of semiprojective $C^*$-algebras but does not hold on all separable $C^*$-algebras may be difficult.
Theorem~\ref{thm:ahlp-asp} provides one possible obstruction.

The problem of deciding if a given $C^*$-algebra is an inductive limit of semiprojective $C^*$-algebras can be challenging---for instance, the question is open for $C(S^2)$.
There are, however, many naturally occurring examples of such $C^*$-algebras.
For instance, all AF algebras and all A$\mathbb T$ algebras are inductive limits of semiprojective $C^*$-algebras; this uses the semiprojectivity of $F$ and $F \otimes C(\mathbb T)$ for any finite dimensional $C^*$-algebra $F$ by \cite[Corollary~2.30]{Blackadar85}.
It then follows from \cite{Elliott-Evans93} that irrational rotation algebras are inductive limits of semiprojective $C^*$-algebras.
Graph $C^*$-algebras associated to countable direct graphs form another natural class of examples---indeed, $C^*$-algebras generated by finite graphs are semiprojective (see \cite[Propositions~2.18 and~2.23]{Blackadar85}), and the inductive limit decomposition then follows from the construction in \cite[Section~1]{Raeburn-Szymanski04}.
Further, as all Kirchberg algebras in the UCT class are inductive limits of Kirchberg algebras in the UCT class with finitely generated $K$-theory (see \cite[Proposition~8.4.13]{Rordam02}, for example), a theorem of Enders in \cite{Enders15} (see \cite{Blackadar04, Szymanski02, Spielberg09} for earlier results) implies that Theorem~\ref{thm:ahlp-asp} applies to all UCT Kirchberg algebras $A$.
Several more examples and permanence properties have been obtained by Thiel in \cite{Thiel19a,Thiel19b}.

We also show that the asymptotic homotopy lifting property holds for all separable $C^*$-algebras with some restrictions on the quotient map $\pi$.
Recall that an extension $0 \rightarrow \ker(\pi) \rightarrow E \xrightarrow\pi B \rightarrow 0$ is \emph{quasidiagonal} if $\ker(\pi)$ admits an approximate unit of projections that is quasicentral in $E$.

\begin{theorem}[cf.\ Corollary~\ref{cor:ahlp-qdl}]\label{thm:ahlp-qd}
	If $A$ is a separable $C^*$-algebra and 
	\begin{equation}
		0 \longrightarrow \ker(\pi) \longrightarrow E \overset\pi\longrightarrow B \longrightarrow 0
	\end{equation} 
	is a quasidiagonal extension of $C^*$-algebras, then $(A, \pi)$ satisfies the asymptotic homotopy lifting property.
\end{theorem}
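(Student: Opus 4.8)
The plan is to construct the lift $\tilde\theta$ directly, using the quasidiagonality of $\pi$ to decompose the given lift $\tilde\phi$, at the endpoint $t' = 0$ of the interval, into a piece lying in a ``kernel corner'' of $E$ and a piece that can be transported along the path $\theta$.

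First, after a routine separability reduction --- replace $B$ by a separable subalgebra containing the images of $\theta$ and of $\pi \circ \tilde\phi$, and replace $E$ by a separable subalgebra that surjects onto it and contains the image of $\tilde\phi$ together with a countable quasicentral approximate unit of projections for the kernel --- I may assume $B$ and $E$ are separable. After replacing $\theta$ by the asymptotically equivalent asymptotic morphism $\theta'$ given by $\theta'_t(a)(t') = \theta_t(a)(t') + (1 - t')\bigl(\pi\tilde\phi_t(a) - \theta_t(a)(0)\bigr)$, I may also assume $\ev_0^B \circ \theta = \pi \circ \tilde\phi$ exactly. Fix a continuous lift $s \colon B \to E$ of $\pi$ with $s(b^*) = s(b)^*$ and $\|s(b)\| \le 2\|b\|$ (symmetrize a Bartle--Graves section), and a sequence $(p_n)$ of projections in $J := \ker(\pi)$ that forms an approximate unit for $J$ and is quasicentral in $E$. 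Put $\psi_n(b) := (1 - p_n) s(b) (1 - p_n)$, an element of the hereditary subalgebra $(1 - p_n) E (1 - p_n)$. Using $(1 - p_n)^2 = 1 - p_n$, the fact that $s$ is multiplicative and additive modulo $J$, and the fact that $(p_n)$ is quasicentral and absorbs $J$, one checks that each $\psi_n$ is a continuous self-adjoint lift of $\pi$, that $(\psi_n)$ is asymptotically multiplicative and asymptotically linear, and that $\psi_n(\pi(e)) \approx (1 - p_n) e (1 - p_n)$ as $n \to \infty$ for each fixed $e \in E$.

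Given an increasing $r \colon \mathbb{N} \to \mathbb{N}$ to be chosen, I would then define $\tilde\theta_n \colon A \to IE$ by
\[
  \tilde\theta_n(a)(t') \;:=\; p_{r(n)}\, \tilde\phi_n(a)\, p_{r(n)} \;+\; (1 - p_{r(n)})\, s\bigl(\theta_n(a)(t')\bigr)\, (1 - p_{r(n)}).
\]
Then $I\pi \circ \tilde\theta_n(a) = \theta_n(a)$ holds exactly, since the first summand lies in $J = \ker(\pi)$ and the second equals $\psi_{r(n)}(\theta_n(a)(\cdot))$, on which $\pi$ acts as the identity. At $t' = 0$ one has $\theta_n(a)(0) = \pi(\tilde\phi_n(a))$, so $\ev_0^E \tilde\theta_n(a) = p_{r(n)} \tilde\phi_n(a) p_{r(n)} + \psi_{r(n)}(\pi(\tilde\phi_n(a))) \approx p_{r(n)} \tilde\phi_n(a) p_{r(n)} + (1 - p_{r(n)}) \tilde\phi_n(a) (1 - p_{r(n)})$, which differs from $\tilde\phi_n(a)$ only by the two off-diagonal terms, each of norm at most $\|[p_{r(n)}, \tilde\phi_n(a)]\|$. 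Finally $\tilde\theta_n$ is asymptotically a $^*$-homomorphism: its two summands are asymptotically $^*$-homomorphisms of $A$ into the mutually orthogonal corners $(1 - p_{r(n)}) E (1 - p_{r(n)})$ and $p_{r(n)} E p_{r(n)}$ --- here one uses the asymptotic multiplicativity of $\psi$, $\theta$ and $\tilde\phi$, and the quasicentrality of the $p_n$ applied to the fixed elements $\tilde\phi_n(a)$ --- and a sum of two $^*$-homomorphisms with orthogonal ranges is again a $^*$-homomorphism. Only finitely many of these approximations are needed at stage $n$, and each holds once $r(n)$ is large enough; choosing $r(n) \to \infty$ quickly enough forces every error to tend to $0$ as $n \to \infty$, so $(\tilde\theta_n)$ is a ``discrete'' solution of the asymptotic lifting problem.

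It remains to promote the discrete solution $(\tilde\theta_n)$ to a genuine continuous asymptotic morphism $\tilde\theta \colon A \xrightarrow{\approx} IE$ completing the diagram, via the usual reparametrization and discretization arguments for asymptotic morphisms with separable domain. I expect this --- rather than the construction itself --- to be the main obstacle: the argument genuinely requires the $p_n$ to be \emph{projections} (the identity $p_n(1 - p_n) = 0$ is exactly what makes the sum of the two corner maps multiplicative, so the quasicentral approximate unit of positive contractions available for an arbitrary extension would not suffice), whereas $t \mapsto p_{r(t)}$ cannot be made norm-continuous, so the passage to a continuous parameter cannot be performed naively and must be routed through the reparametrization machinery. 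Finally, since the construction is purely local --- at stage $n$ it uses only one projection in $\ker(\pi)$ that almost commutes with a finite subset of $E$ and almost absorbs a finite subset of $\ker(\pi)$ --- an approximate, local weakening of quasidiagonality for extensions already suffices, which is presumably the form of Corollary~\ref{cor:ahlp-qdl}.
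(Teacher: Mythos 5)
Your discrete construction is, in essence, the same algebraic decomposition the paper uses (compare the maps $\tilde\vartheta_\lambda(a)(s) = \rho_\lambda(\tilde\phi(a)) + \sigma_\lambda(\theta(a)(s))$ in the proof of Theorem~\ref{thm:hlp-qdl} and the approximate direct sum of Proposition~\ref{prop:approx-direct-sum}), and the stage-$n$ verifications you sketch can be made to work (with some care about uniformity in $t' \in I$, which requires an equicontinuity/Arzel\`a--Ascoli argument as in the paper). The genuine gap is exactly the step you defer to ``the usual reparametrization and discretization arguments'': no such standard argument does what you need. The standard discretization machinery for asymptotic morphisms identifies \emph{homotopy} classes of continuous and discrete asymptotic morphisms, but here you must produce a continuous $\tilde\theta \colon A \xrightarrow{\approx} IE$ with $\mathrm{ev}_0^E\tilde\theta \cong \tilde\phi$ and $(I\pi)\tilde\theta \cong \theta$, i.e.\ control of the \emph{equivalence} class, which is a far finer invariant (the paper stresses that even changing the evaluation sequence in a diagrammatic representation changes the equivalence class). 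Concretely, to interpolate between your $\tilde\theta_n$ and $\tilde\theta_{n+1}$ while continuing to lift the given path $(\theta_t)_{t \in [n,n+1]}$ and to stay close to lifts of $\tilde\phi$, you must connect the corner decomposition built from $p_{r(n)}$ to the one built from $p_{r(n+1)}$; these projections are in general not joined by any norm-continuous path of projections in $\ker(\pi)$, linear interpolation of the two decompositions destroys multiplicativity, and nothing in your setup supplies the connecting homotopies. This is not a technical afterthought but the entire content of the problem.

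The paper's route is designed precisely to supply those connecting homotopies: it writes $A$ as a shape system $(\underline A, \underline\alpha)$ with semiprojective connecting maps, proves an \emph{exact} relative homotopy lifting property for each pair $(\alpha_n, \pi)$ with $\pi$ approximately decomposable (Theorem~\ref{thm:hlp-qdl}, where the approximate decomposition produces an approximate morphism into the mapping cylinder of $\pi$ and semiprojectivity, via Lemma~\ref{lem:approx-lift}, upgrades it to a genuine lift), and then glues these exact lifts into a continuous asymptotic morphism with controlled equivalence class using diagrammatic representations (Lemma~\ref{lem:somehow} and the proof of Theorem~\ref{thm:ahlp}); Blackadar-type stability (Theorem~\ref{thm:liftable-closed}, Corollary~\ref{cor:wishlist}) is what bridges the small jumps between consecutive stages. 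Without semiprojectivity or some substitute playing this role, your construction stops at a discrete family. A smaller remark: your closing guess about Corollary~\ref{cor:ahlp-qdl} is off --- the paper's weakening of quasidiagonality (approximate decomposability) is an approximately multiplicative, approximately bimodule splitting of $\pi$ and explicitly avoids projections, whereas your argument, as you note, depends on $p_n(1-p_n)=0$; so even if completed, your approach would not yield that generalization.
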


We will prove Theorem~\ref{thm:ahlp-qd} under a condition weaker than quasidiagonality of the extenstion (but equivalent to it in the unital case) that we call \emph{approximate decomposability} (see Definition~\ref{def:qdelicious}).
The property will be defined in terms of the existence of a certain approximate splitting of the quotient map $\pi$.
It is characterized by the existence of an approximate decomposition of $E$ into the direct sum of $\ker(\pi)$ and $B$ in a way that does not require the existence of projections in $E$.

Theorems~\ref{thm:ahlp-asp} and~\ref{thm:ahlp-qd} will be proved simultaneously as corollaries of Theorem~\ref{thm:ahlp}.
The basic idea is to exploit a relative notion of the homotopy lifting property for a pair of $^*$-homomorphisms $(\alpha \colon A_0 \rightarrow A, \pi \colon E \rightarrow B)$, with $\pi$ surjective.
This notion is introduced in Section~\ref{sec:hlp}.
If $A$ is semiprojective, then $(\alpha, \pi)$ will satisfy the homotopy lifting property by Blackadar's previously mentioned result.
Further, when $\alpha$ is semiprojective and $\pi$ is approximately decomposable, the pair $(\alpha, \pi)$ has the homotopy lifting property (see Theorem~\ref{thm:hlp-qdl}).
Using the shape theoretic methods of \cite{Dadarlat94}, we prove the asymptotic homotopy lifting property by writing $A$ as an inductive limit of semiprojective $^*$-homomorphisms $\alpha_n \colon A_n \rightarrow A_{n+1}$ and making use of the homotopy lifting property for each pair $(\alpha_n, \pi)$---see Theorem~\ref{thm:ahlp} for the precise statement.

\addtocontents{toc}{\SkipTocEntry}
\subsection*{Notation}
For the most part, our notation is standard and any non-standard notation is explained in the body as it appears.
However, we take a moment to set out our conventions for algebras of continuous functions, which will be used frequently in the paper.

For a compact Hausdorff space $X$ and a $C^*$-algebra $A$, define $XA = C(X, A)$; in the case $X$ is only locally compact, we use the more standard notation $C_b(X, A)$ (respectively $C_0(X, A)$), for the $C^*$-algebras of continuous bounded functions $X \rightarrow A$ (respectively, those vanishing at infinity).
In the former case, this is most often used when $X$ is the space $I = [0, 1]$.
Other topological spaces that occur frequently are $\mathbb R_+ = [0, \infty)$ and $\mathbb R_{\geq t} = [t, \infty)$ for $t \in \mathbb R_+$.

We write $\mathrm{ev}^A_x \colon XA \rightarrow A$ for the evaluation map $f \mapsto f(x)$.
For a $^*$-ho\-mo\-mor\-phism $\phi \colon A \rightarrow B$ and a compact Hausdorff space $X$, we write $X\phi \colon XA \rightarrow XB$ for the induced $^*$-homomorphism $(X\phi)(f)(x) = \phi(f(x))$.
The case when $X$ is locally compact will occur less often; the analogously defined maps $C_b(X, A) \rightarrow C_b(X, B)$ and $C_0(X, A) \rightarrow C_0(X, B)$ will be denoted by $\bar\phi$.
This notation will be recalled whenever it appears.

\addtocontents{toc}{\SkipTocEntry}
\subsection*{Acknowledgments}
Parts of this project were completed during the first author's visit to University of Nebraska--Lincoln and the second author's visit to Texas Christian University.
We are grateful to the respective universities for their hospitality during these visits.
The second author was partially supported by NSF Grant DMS-2000129.

\renewcommand{\thetheorem}{\arabic{theorem}}
\numberwithin{theorem}{section}
\numberwithin{equation}{section}

\tableofcontents

\section{Approximate and asymptotic morphisms}\label{sec:approx}

This section lays out the definitions of and our conventions on approximate and asymptotic morphisms and collects some preliminary results needed in later sections.
We also recall Dadarlat's homotopy limit construction from \cite{Dadarlat94} for constructing asymptotic morphisms from what we call ``diagrammatic representations'' (see Definition~\ref{def:diag-rep}).

In the literature, approximate morphisms are most often indexed by the natural numbers, but it will be convenient to allow more general index sets.
Note that there is a difference between approximate morphisms and asymptotic morphisms (see Definition~\ref{def:asymp-morph}), even when the index set is $\mathbb R_+$: the topology on $\mathbb R_+$ is accounted for in the definition of asymptotic morphisms, whereas approximate morphisms should be regarded as being indexed over a discrete space.

\begin{definition}\label{def:approx-morph}
  For $C^*$-algebras $A$ and $B$, a net $(\sigma_\lambda\colon A\to B)$ of functions is an \emph{approximate morphism} if, for all $a, a_1, a_2 \in A$ and $r_1, r_2 \in \mathbb C$,
  \begin{enumerate}
  \item $\lim_\lambda \|\sigma_\lambda(r_1a_1 + r_2a_2) - (r_1\sigma_\lambda(a_1) + r_2\sigma_\lambda(a_2))\| = 0$,
  \item $\lim_\lambda \|\sigma_\lambda(a^*) - \sigma_\lambda(a)^* \| = 0$, and
  \item $\lim_\lambda \|\sigma_\lambda(a_1a_2) - \sigma_\lambda(a_1) \sigma_\lambda(a_2)\| = 0$.
  \end{enumerate}
  We will often write $(\sigma_\lambda)\colon A\to B$ for such an approximate morphism.
  We say that $(\sigma_\lambda)$ is
  \begin{itemize}
  \item \emph{linear} if each $\sigma_\lambda$ is linear,
  \item \emph{self-adjoint} if $\sigma_\lambda(a^*) = \sigma_\lambda(a)^*$ for all $\lambda$ and $a \in A$,
  \item \emph{pointwise-bounded} if $\sup_\lambda \|\sigma_\lambda(a)\| < \infty$ for all $a\in A$, and
  \item \emph{equicontinuous} if for all $a_0 \in A$ and $\epsilon > 0$, there is $\delta > 0$ such that for all $a \in A$ with $\|a - a_0\| < \delta$ and all $\lambda$, we have $\|\sigma_\lambda(a) - \sigma_\lambda(a_0) \| < \epsilon$.
  \end{itemize}
\end{definition}

For a directed set $\Lambda$, let $\ell^\infty(\Lambda, B)$ be the $C^*$-algebra all of bounded functions $\Lambda \rightarrow B$, and let $c_0(\Lambda, B) \subseteq \ell^\infty(\Lambda, B)$ be the ideal consisting of such functions converging to 0 along $\Lambda$.
Write $q^B_\Lambda \colon \ell^\infty(\Lambda, B) \rightarrow B_\Lambda$ for the quotient map and let $\iota^B_\Lambda \colon B \rightarrow B_\Lambda$ be the embedding given by sending an element to its corresponding constant net.
Note that any pointwise-bounded approximate morphism $(\sigma_\lambda) \colon A \rightarrow B$ indexed by $\Lambda$ induces a $^*$-homomorphism $A \rightarrow B_\Lambda$.
Conversely, given a $^*$-homomorphism $\sigma_\Lambda \colon A \rightarrow B_\Lambda$, any lift to $\ell^\infty(\Lambda, B)$ is a pointwise-bounded approximate morphism.
With some care, the following result allows us to extend this observation to approximate morphisms that are not pointwise-bounded.

\begin{proposition}
  \label{prop:ptwise-bounded}
  If $A$ and $B$ are $C^*$-algebras and $(\sigma_{\lambda})\colon A\to B$ is an approximate morphism, then the following statements hold:
  \begin{enumerate}
  \item\label{ptbd1} $\limsup_\lambda \|\sigma_\lambda(a)\| \leq \|a\|$ for all $a\in A$;
  \item\label{ptbd2} there is an approximate morphism $(\sigma_\lambda')\colon A\to B$, indexed by the same directed set, that is self-adjoint, linear, pointwise-bounded, and satisfies
    \begin{equation}\label{eq:ptbd}
      \lim_\lambda \|\sigma_\lambda(a) - \sigma_\lambda'(a)\| = 0, \qquad a \in A;
    \end{equation}
  \item\label{ptbd3} there is an approximate morphism $(\sigma'_\lambda) \colon A \rightarrow B$, indexed by the same directed set, that is self-adjoint, pointwise-bounded, equicontinuous, and satisfies \eqref{eq:ptbd}.
  \end{enumerate}
\end{proposition}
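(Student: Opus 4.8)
The plan is to reduce everything to the correspondence recalled just before the statement: a pointwise-bounded approximate morphism induces a $^*$-homomorphism $A\to B_\Lambda$, and conversely any lift of such a $^*$-homomorphism to $\ell^\infty(\Lambda,B)$ is a pointwise-bounded approximate morphism with the same induced $^*$-homomorphism. Thus, granting \ref{ptbd1}, statements \ref{ptbd2} and \ref{ptbd3} amount to first normalizing $(\sigma_\lambda)$ to a genuinely pointwise-bounded, self-adjoint approximate morphism and then choosing a suitable lift along $q^B_\Lambda$ (linear for \ref{ptbd2}, continuous for \ref{ptbd3}). The whole difficulty is concentrated in \ref{ptbd1}: one cannot yet form a $^*$-homomorphism into $B_\Lambda$ because $(\sigma_\lambda(a))_\lambda$ need not be bounded, and one cannot deduce $\limsup_\lambda\|\sigma_\lambda(a)\|\le\|a\|$ by exporting the $C^*$-relations to $B_\Lambda$, since that argument would multiply approximations together and there is as yet no control on $\|\sigma_\lambda(a)\|$. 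Breaking this circularity is the main obstacle.

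To prove \ref{ptbd1} I would first treat a self-adjoint contraction $a=a^*\in A$ with $\|a\|\le 1$. Then $1-a\ge 0$ in the unitization $\tilde A$, so $\sqrt{1-a}=1+d$ for a self-adjoint $d\in A$, and expanding $1-a=(1+d)^2$ gives the key identity $a=-(d^2+2d)$. Applying conditions (i) and (iii) of Definition~\ref{def:approx-morph} to this identity — crucially \emph{without} any boundedness input, since no approximation is ever multiplied by $\sigma_\lambda(d)$ — yields $\sigma_\lambda(a)=1-(1+\sigma_\lambda(d))^2+e_\lambda$ with $\|e_\lambda\|\to 0$. Writing $\sigma_\lambda(d)=r_\lambda+ij_\lambda$ with $r_\lambda,j_\lambda$ self-adjoint, condition (ii) forces $\|j_\lambda\|\to 0$, while $\operatorname{Re}\big((1+\sigma_\lambda(d))^2\big)=(1+r_\lambda)^2-j_\lambda^2\ge-\|j_\lambda\|^2\cdot 1$, so $\operatorname{Re}\sigma_\lambda(a)\le(1+\|j_\lambda\|^2+\|e_\lambda\|)\cdot 1$. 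Running the same estimate for $-a$ and comparing $\sigma_\lambda(-a)$ with $-\sigma_\lambda(a)$ via condition (i) gives the matching lower bound, hence $\limsup_\lambda\|\operatorname{Re}\sigma_\lambda(a)\|\le 1$; since $\operatorname{Im}\sigma_\lambda(a)\to 0$ (condition (ii) again), $\limsup_\lambda\|\sigma_\lambda(a)\|\le 1$. A rescaling via condition (i) extends this to all self-adjoint elements, and for general $a$ one first notes $\limsup_\lambda\|\sigma_\lambda(a)\|<\infty$ (split into real and imaginary parts); with that finite bound in hand, conditions (ii) and (iii) give $\|\sigma_\lambda(a)\|^2=\|\sigma_\lambda(a)^*\sigma_\lambda(a)\|=\|\sigma_\lambda(a^*a)\|+o(1)$, and the self-adjoint case applied to $a^*a$ finishes \ref{ptbd1}.

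For \ref{ptbd2} and \ref{ptbd3} I would set $\rho_\lambda(a)=\tfrac12\big(\sigma_\lambda(a)+\sigma_\lambda(a^*)^*\big)$, which is self-adjoint and, by condition (ii), satisfies $\|\rho_\lambda(a)-\sigma_\lambda(a)\|\to 0$, and then truncate: $\sigma''_\lambda(a)=g_{\|a\|+1}(\rho_\lambda(a))$, where $g_c$ cuts the spectrum of the self-adjoint element $\rho_\lambda(a)$ to $[-c,c]$ by functional calculus. By \ref{ptbd1}, $\|\rho_\lambda(a)\|<\|a\|+1$ eventually, so $\sigma''_\lambda(a)=\rho_\lambda(a)$ eventually; hence $(\sigma''_\lambda)$ is self-adjoint, has $\sup_\lambda\|\sigma''_\lambda(a)\|\le\|a\|+1$, is $o(1)$-close to $(\sigma_\lambda)$, and is an approximate morphism (a pointwise-$o(1)$ perturbation of an approximate morphism that stays pointwise bounded is again one). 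Let $\Sigma\colon A\to B_\Lambda$ be the $^*$-homomorphism it induces. For \ref{ptbd2}, pick a \emph{linear} vector-space section $s_0\colon B_\Lambda\to\ell^\infty(\Lambda,B)$ of $q^B_\Lambda$ (every linear surjection splits; symmetrize $s_0$ in the usual way to make it self-adjoint) and set $\sigma'_\lambda=\ev_\lambda\circ s_0\circ\Sigma$, where $\ev_\lambda\colon\ell^\infty(\Lambda,B)\to B$ is the coordinate map at $\lambda$: this is linear and self-adjoint by construction, \emph{pointwise} bounded simply because $s_0$ takes values in $\ell^\infty(\Lambda,B)$ (it is not, and need not be, uniformly bounded in $a$), satisfies $\|\sigma'_\lambda(a)-\sigma''_\lambda(a)\|\to 0$ because $q^B_\Lambda s_0\Sigma=\Sigma$, and is an approximate morphism by the perturbation observation. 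For \ref{ptbd3}, replace $s_0$ by a \emph{continuous}, norm-bounded section $s_1$ of $q^B_\Lambda$ furnished by the Bartle--Graves selection theorem (again symmetrized) and put $\sigma'_\lambda=\ev_\lambda\circ s_1\circ\Sigma$; pointwise boundedness and closeness are as before, and equicontinuity of $\{\sigma'_\lambda\}$ is nothing but continuity of the single map $s_1\circ\Sigma\colon A\to\ell^\infty(\Lambda,B)$, which holds since $\Sigma$ is contractive and $s_1$ is continuous.

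As indicated, the one step needing genuine care is \ref{ptbd1}, and within it the crux is to use the identity $a=-(d^2+2d)$ rather than something like $\|a\|^2\cdot 1-a^*a\ge 0$: the former lets one invoke conditions (i) and (iii) of Definition~\ref{def:approx-morph} with no approximation ever multiplied by an uncontrolled value of $\sigma_\lambda$, which is exactly what is needed to get the norm estimate before pointwise boundedness is available. Everything after \ref{ptbd1} is bookkeeping — symmetrization, a functional-calculus truncation, and a choice of set-theoretic, resp.\ continuous, section of $q^B_\Lambda$.
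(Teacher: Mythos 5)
Your argument is correct, and for parts \ref{ptbd2} and \ref{ptbd3} it is essentially the same as the paper's: normalize the net to a self-adjoint, pointwise-bounded one, pass to the induced $^*$-homomorphism $\Sigma\colon A\to B_\Lambda$, and pull it back along a self-adjoint linear (resp.\ continuous, via Bartle--Graves) section of $q^B_\Lambda$, reading off pointwise boundedness from the fact that the lift lands in $\ell^\infty(\Lambda,B)$ and equicontinuity from the continuity of the single map $s_1\Sigma$. The genuine difference is in \ref{ptbd1}: the paper simply cites \cite[Proposition~25.1.3]{Blackadar-K}, whereas you give a self-contained proof, and your proof is sound --- the identity $a=-(d^2+2d)$ coming from $\sqrt{1-a}=1+d$ is exactly what lets you invoke approximate linearity and multiplicativity without ever multiplying an error by an uncontrolled $\sigma_\lambda$-value, the estimate via $\operatorname{Re}\bigl((1+\sigma_\lambda(d))^2\bigr)=(1+r_\lambda)^2-j_\lambda^2$ together with the $\pm a$ comparison gives the self-adjoint case, and the passage to general $a$ through a finite limsup and the $C^*$-identity is correct. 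A further cosmetic difference: your normalization in \ref{ptbd2} is by truncation, while the paper's is blunter --- using \ref{ptbd1}, choose $\lambda_a$ with $\|\sigma_\lambda(a)\|\le\|a\|+1$ for $\lambda\ge\lambda_a$ and set $\sigma''_\lambda(a)=\sigma_\lambda(a)$ for $\lambda\ge\lambda_a$ and $0$ otherwise.

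One small repair is needed in your truncation step: for general $a$ the element $\rho_\lambda(a)=\tfrac12\bigl(\sigma_\lambda(a)+\sigma_\lambda(a^*)^*\bigr)$ is \emph{not} a self-adjoint element of $B$ (only the map $\rho_\lambda$ is self-adjoint, i.e.\ $\rho_\lambda(a^*)=\rho_\lambda(a)^*$), so the functional-calculus cutoff $g_{\|a\|+1}(\rho_\lambda(a))$ is not defined. Replace it by the retraction onto the closed ball of radius $\|a\|+1$, namely $b\mapsto\min\{1,(\|a\|+1)/\|b\|\}\,b$, which commutes with the adjoint and, by \ref{ptbd1}, agrees with $\rho_\lambda(a)$ eventually; alternatively use the paper's zeroing device. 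With that change everything downstream goes through, including the perturbation observation you invoke, which does hold but quietly uses \ref{ptbd1} to know the original net is eventually bounded when estimating the product terms.
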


\begin{proof}
  The bound in \ref{ptbd1} is standard: see the proof of \cite[Proposition~25.1.3]{Blackadar-K}, for example.
	
  Let $\Lambda$ be the index set of the net.  For \ref{ptbd2}, given $a\in A$, use (i) to obtain $\lambda_a \in \Lambda$ such that $\|\sigma_\lambda(a)\| \leq \|a\| + 1$ for all $\lambda \geq \lambda_a$.
  For $\lambda \in \Lambda$, define $\sigma_\lambda'' \colon A \rightarrow B$ by $\sigma_\lambda''(a) = \sigma_\lambda(a)$ for $\lambda \geq \lambda_a$ and $\sigma_\lambda''(a) = 0$ for $\lambda \not\geq \lambda_a$.
  Then $(\sigma''_\lambda)$ is a pointwise-bounded approximate morphism and hence induces a $^*$-homomorphism $A \rightarrow B_\Lambda$.
  Let $(\sigma'_\lambda)$ be a self-adjoint linear lift of this $^*$-homomorphism to $\ell^\infty(\Lambda, B)$.
	
  Condition~\ref{ptbd3} is a standard consequence of the Bartle--Graves selection theorem (\cite[Theorem~4]{Bartle-Graves52}; see also \cite[p. 85]{Dunford-Schwartz88}): every bounded linear surjective map between Banach spaces admits a continuous (not necessarily linear) splitting.
  Indeed, by \ref{ptbd2}, we may assume $(\sigma_\lambda)$ is pointwise-bounded.
  Let $\sigma_\Lambda \colon A \rightarrow B_\Lambda$ be the induced $^*$-homomorphism, and let $f' \colon B_\Lambda \rightarrow \ell^\infty(\Lambda, B)$ be a continuous splitting of $q^B_\Lambda$.
  Define $f \colon B_\Lambda \rightarrow \ell^\infty(\Lambda, B)$ by $f(b) = \frac12(f'(b) + f'(b^*)^*)$ for $b \in B$.
  Then $f$ is a continuous splitting of $q^\Lambda_B$ satisfying $f(b^*) = f(b)^*$ for all $b \in B_\Lambda$.
  For each $\lambda$, let $\sigma'_\lambda \colon A \rightarrow B$ be the $\lambda$-component of the composition $f \sigma_\Lambda$.
  Then $(\sigma'_\lambda)$ is a self-adjoint pointwise-bounded approximate morphism satisfying \eqref{eq:ptbd}.
  To prove that it is equicontinuous, fix $a_0 \in A$ and $\epsilon > 0$.
  Let $\delta > 0$ be such that for every $b \in B_\Lambda$ with $\|b - \sigma_\Lambda(a_0)\| < \delta$, we have $\|f(b) - f(\sigma_\Lambda(a_0))\| < \epsilon$.
  For $a \in A$ with $\|a - a_0\| < \delta$, we have $\|\sigma_\Lambda(a) - \sigma_\Lambda(a_0)\| < \delta$, because $^*$-homomorphisms are contractive, and so $\|f(\sigma_\Lambda(a)) - f(\sigma_\Lambda(a_0))\| < \epsilon$.
  It follows that $\|\sigma_\lambda(a) - \sigma_\lambda(a_0)\| < \epsilon$ for all $\lambda$.
\end{proof}

\begin{definition}\label{def:asymp-morph}
  For $C^*$-algebras $A$ and $B$, an \emph{asymptotic morphism} $\phi \colon A \xrightarrow\approx B$ is an approximate morphism $(\phi_t) \colon A \rightarrow B$ indexed by $\mathbb R_+$ such that the function $\mathbb R_+ \rightarrow B \colon t \rightarrow \phi_t(a)$ is continuous for all $a \in A$.
  For asymptotic morphisms $\phi, \psi \colon A \xrightarrow\approx B$, we say $\phi$ and $\psi$ are \emph{equivalent} and write $\phi \cong \psi$ if $\lim_t \|\phi_t(a) - \psi_t(a)\| = 0$ for all $a \in A$.
\end{definition}

If $A$, $B$, $D$, and $E$ are $C^*$-algebras, $\phi\colon A \xrightarrow\approx B$ is an asymptotic morphism, and $\psi \colon B \rightarrow D$ and $\theta \colon E \rightarrow A$ are $^*$-homomorphisms, then there are asymptotic morphisms $\psi \phi \colon A \xrightarrow\approx D$ and $\phi \theta \colon E \xrightarrow\approx B$ given by $(\psi\phi)_t(a) = \psi(\phi_t(a))$ and $(\phi\theta)_t(e) = \phi_t(\theta(e))$ for $t \in \mathbb R_+$, $a \in A$, and $e \in E$.

Note that all asymptotic morphisms are pointwise-bounded.
Indeed, given an asymptotic morphism $\phi \colon A \xrightarrow\approx B$ and $a \in A$, Proposition~\ref{prop:ptwise-bounded} implies there is $t_0 \in \mathbb R_+$ such that $\|\phi_t(a)\| \leq \|a\| + 1$ for all $t \in \mathbb R_{\geq t_0}$.
Since the continuous function $t \mapsto \|\phi_t(a)\|$ is necessarily bounded on the compact set $[0, t_0]$, the function $t \mapsto \|\phi_t(a)\|$ is bounded on $\mathbb R_+$.
In particular, an asymptotic morphism $\phi \colon A \xrightarrow\approx B$ induces a $^*$-homomorphism $\phi_{\rm as} \colon A \rightarrow B_{\rm as} = C_b(\mathbb R_+, B) / C_0(\mathbb R_+, B)$.
Note also that if $\phi, \psi \colon A \xrightarrow\approx B$ are asymptotic morphisms, then $\phi \cong \psi$ if and only if $\phi_{\rm as} = \psi_{\rm as}$.

We end this section with a useful procedure for constructing
asymptotic morphisms, due to Dadarlat \cite{Dadarlat94}.
We write $(\underline A, \underline \alpha)$ to denote a sequential inductive system
\begin{equation}
\begin{tikzcd}
	A_1 \arrow{r}{\alpha_1} & A_2 \arrow{r}{\alpha_2} & A_3 \arrow{r}{\alpha_3} & \cdots
\end{tikzcd}	
\end{equation}
of $C^*$-algebras.  For integers $m > n \geq 1$, let $\alpha_{n, n} =
\mathrm{id}_{A_n}$ and 
\begin{equation}
\alpha_{m, n} = \alpha_{m-1} \cdots
 \alpha_{n+1} \alpha_n.
\end{equation}  
If $A$ is the inductive limit of $(\underline A, \underline \alpha)$, let $\alpha_{\infty, n} \colon A_n \rightarrow A$ be the canonical map.

The following is a slight variation of Dadarlat's homotopy limit functor in \cite{Dadarlat94}.
The evaluation sequence $(t_n)_{n=1}^\infty$ is not included in \cite{Dadarlat94}, where the asymptotic morphisms are only considered up to homotopy, but it will be crucial for us since changing the evaluation sequence typically does change the equivalence class of the asymptotic morphism.

\begin{definition}\label{def:diag-rep}
  Let $(\underline A, \underline \alpha)$ be an inductive system of $C^*$-algebras with limit $A$ and let $B$ be a $C^*$-algebra.
  A \emph{diagrammatic representation} of an asymptotic morphism $A \xrightarrow\approx B$ is a triple $(\underline \phi, \underline h, \underline t) \colon (\underline A, \underline \alpha) \rightarrow B$ consisting of a  sequence of $^*$-ho\-mo\-morph\-isms $(\phi_n \colon A_n \rightarrow B)_{n=1}^\infty$, a sequence of homotopies $(h_n \colon A_n \rightarrow IB)_{n=1}^\infty$, and an unbounded strictly increasing sequence $(t_n)_{n=1}^\infty$ of strictly positive real numbers such that
  \begin{equation}\label{eq:diag-rep1}
    \mathrm{ev}^B_0 h_n = \phi_n \qquad \text{and} \qquad \mathrm{ev}^B_1 h_n = \phi_{n+1} \alpha_n.
  \end{equation}
  Given such a triple $(\underline \phi, \underline h, \underline t)$, define $\Phi_n \colon A_n \rightarrow C_b(\mathbb R_{\geq t_n}, B)$ by
  \begin{equation}\label{eq:diag-rep2}
    \Phi_n(a)(t) = h_m\big(\alpha_{m, n}(a)\big)\Big(\frac{t - t_m}{t_{m+1} - t_m} \Big),\quad a \in A_n,\ t_m \leq t < t_{m+1},\ m \geq n.
  \end{equation}
  Then there is a commuting diagram
  \begin{equation}\label{eq:diag-rep3}
    \begin{tikzcd}
      A_1 \arrow{r}{\alpha_1} \arrow{d}{\Phi_1} & A_2 \arrow{r}{\alpha_2} \arrow{d}{\Phi_2} & A_3 \arrow{r}{\alpha_3} \arrow{d}{\Phi_3} & \cdots &[-5ex] A\hphantom{,} \arrow[dashed]{d}{\phi_{\rm as}}	\\
      C_b(\mathbb R_{\geq t_1}, B) \arrow{r}{\rho^B_1} & C_b(\mathbb R_{\geq t_2}, B) \arrow{r}{\rho^B_2} & C_b(\mathbb R_{\geq t_3}, B) \arrow{r}{\rho^B_3} \arrow{r} & \cdots & B_{\rm as},
    \end{tikzcd}
  \end{equation}
  where the maps $\rho^B_n$ are the restriction maps.
  The fact that this diagram commutes implies that there is an induced $^*$-homomorphism $\phi_{\rm as} \colon A \rightarrow B_{\rm as}$.
  The corresponding asymptotic morphism $\phi \colon A \xrightarrow\approx B$ is the one represented by $(\underline \phi, \underline h, \underline t)$.
\end{definition}

The name ``diagrammatic representation'' was chosen since we typically picture the pair $(\underline \phi, \underline h)$ as a diagram
\begin{equation}\label{eq:diag-rep4}
  \begin{tikzcd}[row sep = 4ex, column sep = 4ex]
    A_1 \arrow{rr}{\alpha_1} \arrow{dr}{h_1} \arrow{dd}{\phi_1} & & A_2 \arrow{dd}{\phi_2} \arrow{rr}{\alpha_2} \arrow{dr}{h_2} \arrow{dd}{\phi_2} & & A_3 \arrow{rr}{\alpha_3} \arrow{dr}{h_3} \arrow{dd}{\phi_3} & & A_4 \arrow{dd}{\phi_4} \arrow[phantom]{rr}[description]{\cdots} &  & \phantom{A_5} \\
    & IB \arrow[shift left = .5ex]{dr}[near start]{\mathrm{ev}^B_1} \arrow[shift right = .5ex]{dr}[near start, swap]{\mathrm{ev}^B_0} & & IB \arrow[shift left = .5ex]{dr}[near start]{\mathrm{ev}^B_1} \arrow[shift right = .5ex]{dr}[near start, swap]{\mathrm{ev}^B_0} & & IB \arrow[shift left = .5ex]{dr}[near start]{\mathrm{ev}^B_1} \arrow[shift right = .5ex]{dr}[near start, swap]{\mathrm{ev}^B_0} & & & \\
    B \arrow[equals]{rr} & & B \arrow[equals]{rr} & & B  \arrow[equals]{rr} & & B \arrow[phantom]{rr}[description]{\cdots} & & \phantom{B}
  \end{tikzcd}
\end{equation}
with commuting triangles.
Although the choice evaluation sequence $\underline t$ is not recorded in this schematic, it is crucial in the definition of the induced asymptotic morphism.

We establish some notation for composing a diagrammatic representation of an asymptotic morphism with a $^*$-homomorphism.
Suppose $A$, $B$, and $D$ are $C^*$-algebras, $\phi \colon A \xrightarrow\approx B$ is an asymptotic morphism, and $\psi \colon B \rightarrow D$ is a $^*$-homomorphism.
Let $(\underline A, \underline \alpha)$ be an inductive limit of $C^*$-algebras with limit $A$ and let $(\underline \phi, \underline h, \underline t) \colon (\underline A, \underline \alpha) \rightarrow B$ be a diagrammatic representation of $\phi$.
Then the triple $(\underline \theta, \underline k, \underline t) \colon (\underline A, \underline \alpha) \rightarrow D$ is a diagrammatic representation of the asymptotic morphism $\psi \phi$, where $\theta_n = \psi \phi_n$ and $k_n = (I\psi) h_n$ for all $n \geq 1$.
The triple $(\underline \theta, \underline k, \underline t)$ will be denoted $\psi_*(\underline \phi, \underline h, \underline t)$.

The following lemma provides a simple sufficient condition for two diagrammatic representations to represent the same asymptotic morphism.

\begin{lemma}\label{lem:close-diagrams}
  Let $A$ and $B$ be $C^*$-algebras with $A$ separable and fix an inductive system $(\underline A, \underline \alpha)$ with limit $A$.
  Suppose that for each $n \geq 1$, a finite set $\mathcal F_n \subseteq A_n$ and $\epsilon_n > 0$ are given such that $\lim_n \epsilon_n = 0$, $\alpha_n(\mathcal F_n) \subseteq \mathcal F_{n+1}$ for all $n \geq 1$, and $\bigcup_{n=1}^\infty \alpha_{\infty, n}(\mathcal F_n)$ is dense in $A$. If
  \begin{equation}
    (\underline \phi, \underline h, \underline t), (\underline \phi', \underline h', \underline t) \colon (\underline A, \underline \alpha) \rightarrow B
  \end{equation}
  are diagrammatic representations of asymptotic morphisms $\phi, \phi' \colon A \xrightarrow\approx B$ such that
  $\|h_n(a) - h_n'(a)\| < \epsilon_n$ for all $a \in \mathcal F_n$ and $n \geq 1$, 
  then $\phi \cong \phi'$.
\end{lemma}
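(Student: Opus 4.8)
The plan is to show directly that $\phi_{\rm as}$ and $\phi'_{\rm as}$ agree on $A$, which by the remark following Definition~\ref{def:asymp-morph} is equivalent to $\phi \cong \phi'$. Since $\bigcup_n \alpha_{\infty, n}(\mathcal F_n)$ is dense in $A$ and $\phi_{\rm as}, \phi'_{\rm as}$ are $^*$-homomorphisms (hence contractive), it suffices to check that $\phi_{\rm as}(\alpha_{\infty, n}(a)) = \phi'_{\rm as}(\alpha_{\infty, n}(a))$ for each $n \geq 1$ and each $a \in \mathcal F_n$; equivalently, tracing through the commuting diagram \eqref{eq:diag-rep3}, that $q^B(\Phi_n(a) - \Phi'_n(a)) = 0$ in $B_{\rm as}$, i.e. that $\lim_{t \to \infty} \|\Phi_n(a)(t) - \Phi'_n(a)(t)\| = 0$, where $\Phi_n$, $\Phi'_n$ are the maps built from the two diagrammatic representations via \eqref{eq:diag-rep2}.

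First I would fix $n$ and $a \in \mathcal F_n$. For $m \geq n$ and $t_m \leq t < t_{m+1}$, formula \eqref{eq:diag-rep2} gives
\begin{equation*}
  \Phi_n(a)(t) = h_m(\alpha_{m,n}(a))\Big(\tfrac{t - t_m}{t_{m+1} - t_m}\Big), \qquad \Phi'_n(a)(t) = h'_m(\alpha_{m,n}(a))\Big(\tfrac{t - t_m}{t_{m+1} - t_m}\Big),
\end{equation*}
so $\|\Phi_n(a)(t) - \Phi'_n(a)(t)\| \leq \sup_{s \in [0,1]} \|h_m(\alpha_{m,n}(a))(s) - h'_m(\alpha_{m,n}(a))(s)\| = \|h_m(\alpha_{m,n}(a)) - h'_m(\alpha_{m,n}(a))\|$, the norm on the right being computed in $IB$. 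Now the key point: the hypotheses $\alpha_k(\mathcal F_k) \subseteq \mathcal F_{k+1}$ for all $k$ and $a \in \mathcal F_n$ force $\alpha_{m,n}(a) \in \mathcal F_m$ for every $m \geq n$ (an immediate induction on $m$). Hence the assumed closeness bound $\|h_m(b) - h'_m(b)\| < \epsilon_m$ for $b \in \mathcal F_m$ applies with $b = \alpha_{m,n}(a)$, giving
\begin{equation*}
  \|\Phi_n(a)(t) - \Phi'_n(a)(t)\| < \epsilon_m \qquad \text{whenever } t_m \leq t < t_{m+1},\ m \geq n.
\end{equation*}

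Since $(t_m)$ is unbounded and strictly increasing, as $t \to \infty$ the index $m = m(t)$ with $t_m \leq t < t_{m+1}$ tends to infinity, and $\epsilon_m \to 0$ by assumption; therefore $\lim_{t\to\infty}\|\Phi_n(a)(t) - \Phi'_n(a)(t)\| = 0$, which is what we needed. Running over all $n$ and all $a \in \mathcal F_n$ and invoking density and contractivity of $\phi_{\rm as}, \phi'_{\rm as}$ completes the argument. I do not expect a genuine obstacle here: the only thing to be slightly careful about is the bookkeeping that $\alpha_{m,n}(a)$ lands in $\mathcal F_m$ — this is exactly why the nesting hypothesis $\alpha_n(\mathcal F_n)\subseteq\mathcal F_{n+1}$ is imposed — and the (harmless) point that the bound is only assumed for $m \geq n$, which is all that enters since $\Phi_n$ is defined on $\mathbb R_{\geq t_n}$ and \eqref{eq:diag-rep2} only involves indices $m \geq n$.
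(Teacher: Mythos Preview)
Your proof is correct and follows essentially the same approach as the paper's: both use the nesting $\alpha_n(\mathcal F_n)\subseteq\mathcal F_{n+1}$ to see that $\alpha_{m,n}(a)\in\mathcal F_m$, apply the hypothesis $\|h_m-h'_m\|<\epsilon_m$ on $\mathcal F_m$ to bound $\|\Phi_n(a)(t)-\Phi'_n(a)(t)\|<\epsilon_m$ for $t\in[t_m,t_{m+1})$, and then conclude $\phi_{\rm as}=\phi'_{\rm as}$ on the dense set $\bigcup_n\alpha_{\infty,n}(\mathcal F_n)$. Your writeup is in fact slightly more explicit than the paper's about the passage from the pointwise bound to the limit statement.
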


\begin{proof}
  Let $\Phi_n$ be given as in \eqref{eq:diag-rep2} and define $\Phi'_n$ analogously.
  Further, let $\rho^B_n$ be the restriction maps as in \eqref{eq:diag-rep3}.
  For $m \geq n \geq 1$, $a \in \mathcal F_n$, and $t \in \mathcal [t_m, t_{m+1}]$, we have $\|\Phi_m(a)(t) - \Phi_m'(a)(t)\| < \epsilon_m$ since $\alpha_{m, n}(a) \in \mathcal F_m$.
  This implies that for all $n \geq 1$,
  \begin{align} \rho^B_{\infty, n}\big(\Phi_n(a)\big) &= \rho^B_{\infty, n}\big(\Phi'_n(a)\big), \quad a \in \mathcal F_n,
  \intertext{or equivalently,} \phi_{\rm as}\big(\alpha_{\infty, n}(a)\big) &= \phi'_{\rm as} \big(\alpha_{\infty, n}(a)\big), \quad a \in \mathcal F_n.
  \end{align}
  Since $\bigcup_{n=1}^\infty \alpha_{\infty, n}(\mathcal F_n)$ is dense in $A$, it follows that $\phi_{\rm as} = \phi'_{\rm as}$, so $\phi \cong \phi'$.
\end{proof}

Note that the finite sets $\mathcal F_n$ required in Lemma~\ref{lem:close-diagrams} always exist by following simple result.

\begin{lemma}\label{lem:finite-sets}
  If $(\underline A, \underline \alpha)$ is an inductive system of separable $C^*$-algebras with limit $A$, then there are finite sets $\mathcal F_n \subseteq A_n$ such that $\alpha_n(\mathcal F_n) \subseteq \mathcal F_{n+1}$ for all $n \geq 1$ and $\bigcup_{n=1}^\infty \alpha_{\infty, n}(\mathcal F_n)$ is dense in $A$.
\end{lemma}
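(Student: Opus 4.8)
The plan is a straightforward diagonal construction, so I do not expect any real obstacle; the one point requiring care is the bookkeeping of indices so that the finite sets are genuinely nested along the connecting maps. First I would use separability: for each $n \geq 1$, fix a sequence $(a_{n,k})_{k=1}^\infty$ in $A_n$ whose closure is all of $A_n$, and write $D_n = \{a_{n,k} : k \geq 1\}$. The idea is then to let $\mathcal F_n$ collect the images in $A_n$ of the first $n$ of these chosen elements from each of the first $n$ algebras.

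Concretely, I would set
\[
  \mathcal F_n = \{\, \alpha_{n,m}(a_{m,k}) : 1 \leq m \leq n,\ 1 \leq k \leq n \,\},
\]
which is a finite subset of $A_n$. The compatibility condition is then immediate from the identity $\alpha_n \alpha_{n,m} = \alpha_{n+1,m}$ (valid for all $m \leq n$): applying $\alpha_n$ to a typical element $\alpha_{n,m}(a_{m,k})$ of $\mathcal F_n$, with $m, k \leq n$, produces $\alpha_{n+1,m}(a_{m,k})$, and since $m, k \leq n < n+1$ this lies in $\mathcal F_{n+1}$. Hence $\alpha_n(\mathcal F_n) \subseteq \mathcal F_{n+1}$ for all $n \geq 1$.

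For density, I would use $\alpha_{\infty,n}\alpha_{n,m} = \alpha_{\infty,m}$ to compute
\[
  \bigcup_{n=1}^\infty \alpha_{\infty,n}(\mathcal F_n) = \{\, \alpha_{\infty,m}(a_{m,k}) : m, k \geq 1 \,\} = \bigcup_{m=1}^\infty \alpha_{\infty,m}(D_m).
\]
Since each $\alpha_{\infty,m}$ is contractive, hence continuous, and $D_m$ is dense in $A_m$, the image $\alpha_{\infty,m}(D_m)$ is dense in $\alpha_{\infty,m}(A_m)$. Therefore the closure of $\bigcup_m \alpha_{\infty,m}(D_m)$ contains $\bigcup_m \alpha_{\infty,m}(A_m)$, and the latter is dense in $A$ by the definition of the inductive limit. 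Taking closures once more shows that $\bigcup_n \alpha_{\infty,n}(\mathcal F_n)$ is dense in $A$, as required.

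As noted, there is essentially nothing hard here: the only thing to get right is to define each $\mathcal F_n$ entirely in terms of elements pulled forward from earlier stages (rather than adjoining fresh elements of $A_n$ at stage $n$), which is exactly what makes the relation $\alpha_n(\mathcal F_n) \subseteq \mathcal F_{n+1}$ automatic while still exhausting a dense subset of $A$ in the limit.
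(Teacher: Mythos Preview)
Your proof is correct and actually cleaner than the paper's. The paper works from the limit downward: it fixes an increasing sequence of finite sets $\mathcal G_n \subseteq A$ with dense union, then for each $a \in \mathcal G_n$ chooses approximate preimages in $A_n$ (one for each tolerance $1/m$ with $m \leq n$ that is achievable), calls the resulting finite set $\mathcal F_n'$, and finally forces the nesting by hand via $\mathcal F_{n+1} = \mathcal F_{n+1}' \cup \alpha_n(\mathcal F_n)$. Your approach works from the building blocks upward: you fix dense sequences in each $A_n$ and simply push forward the first $n$ terms from each of the first $n$ stages. This makes the nesting $\alpha_n(\mathcal F_n) \subseteq \mathcal F_{n+1}$ automatic from the index bookkeeping rather than something imposed after the fact, and the density follows from the elementary observation that continuous images of dense sets are dense in the image. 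The paper's route has the mild advantage that it only uses separability of the limit $A$, whereas yours uses separability of each $A_n$; but since the lemma hypothesizes the latter anyway, your argument is the more economical one. One small quibble: your closing remark that $\mathcal F_n$ consists ``entirely of elements pulled forward from earlier stages'' is not quite accurate, since the $m=n$ terms $a_{n,1},\dots,a_{n,n}$ are fresh; the point is rather that everything in $\mathcal F_n$ is \emph{indexed} in a way that survives passage to $\mathcal F_{n+1}$.
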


\begin{proof}
  Let $(\mathcal G_n)_{n=1}^\infty$ be an increasing sequence of finite subsets of $A$ with dense union.
  Given $n \geq 1$ and $a \in \mathcal G_n$, for each integer $m$ such that $1 \leq m \leq n$ and there exists $a_n \in A_n$ with $\|\alpha_{\infty, n}(a_n) - a \| < \frac1m$, choose one such $a_n$ and let $\mathcal F_n'$ denote the set of all such choices.  Then $\mathcal F_n'$ is a finite subset of $A_n$ (which may be empty).
  Define finite sets $\mathcal F_n \subseteq A_n$ by $\mathcal F_1 = \mathcal F_1'$ and $\mathcal F_{n+1} = \mathcal F_{n+1}' \cup \alpha_n(\mathcal F_n)$ for $n \geq 1$.
	
  Fix $\epsilon > 0$ and $a \in A$.  Let $m \geq 1$ be such that $\frac1m < \epsilon$.  Then choose an integer $n \geq 1$ such that there is $a' \in  \mathcal \mathcal G_n$ with $\|a - a'\| < \epsilon$.  Enlarging $n$ if necessary, we may further assume that there exists $a_n \in A_n$ such that $\|\alpha_{\infty, n}(a_n) - a'\| < \frac1m$.  By the choice of $\mathcal F_n'$, there exists $a_n' \in \mathcal F_n' \subseteq \mathcal F_n$ satisfying $\|\alpha_{\infty, n'}(a_n') - a' \| < \frac1m < \epsilon$.  Then $\|\alpha_{\infty, n}(a_n') - a \| < 2\epsilon$, and this shows that $\bigcup_{n=1}^\infty \alpha_{\infty, n}(\mathcal F_n)$ is dense in $A$.
\end{proof}

We end this section by noting that shifting a diagrammatic representation does not change the asymptotic morphism it represents.
In the notation of Definition~\ref{def:diag-rep}, for each integer $n_0 \geq 1$, let $(\underline \phi, \underline h, \underline t)_{+n_0} \colon (\underline A, \underline \alpha) \rightarrow B$ be the diagrammatic representation given by the sequences $(\phi_{n+n_0}\alpha_{n+n_0, n})_{n=1}^\infty$, $(h_{n+n_0}\alpha_{n+n_0, n})_{n=1}^\infty$, and $(t_{n+n_0})_{n=1}^\infty$.

\begin{lemma}\label{lem:shift}
  Consider an inductive system $(\underline A, \underline \alpha)$, a $C^*$-algebras $B$, a diagrammatic representation $(\underline \phi, \underline h, \underline t) \colon (\underline A, \underline \alpha) \rightarrow B$ of an asymptotic morphism $\phi \colon A \xrightarrow\approx B$.
  For any integer $n_0 \geq 1$, $(\underline \phi, \underline h, \underline t)_{+n_0}$ is a diagrammatic representation of $\phi$.
\end{lemma}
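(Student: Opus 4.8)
The plan is to show that the "shifted" diagrammatic representation $(\underline\phi, \underline h, \underline t)_{+n_0}$ induces the same $^*$-homomorphism $A \to B_{\mathrm{as}}$ as the original one, using the explicit formula \eqref{eq:diag-rep2} for the associated $\Phi_n$'s together with the uniqueness of the map induced on the inductive limit. The key observation is that the $^*$-homomorphism $\phi_{\mathrm{as}} \colon A \to B_{\mathrm{as}}$ associated to $(\underline\phi, \underline h, \underline t)$ is, by the commutativity of \eqref{eq:diag-rep3}, the unique map satisfying $\phi_{\mathrm{as}} \circ \alpha_{\infty, n} = \rho^B_{\infty, n} \circ \Phi_n$ for all $n$, where $\rho^B_{\infty, n} \colon C_b(\mathbb R_{\geq t_n}, B) \to B_{\mathrm{as}}$ is the composite of the restriction maps followed by the quotient. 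So it suffices to check that the analogous maps $\Phi_n^{+n_0}$ for the shifted representation satisfy $\rho^B_{\infty, n}\big(\Phi_n^{+n_0}(a)\big) = \phi_{\mathrm{as}}\big(\alpha_{\infty, n}(a)\big)$ for all $n \geq 1$ and $a \in A_n$, where now $\rho^B_{\infty, n}$ is taken starting from $C_b(\mathbb R_{\geq t_{n+n_0}}, B)$; since passing to $B_{\mathrm{as}}$ only depends on the germ at infinity, the shift in the index of the half-line is harmless.

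First I would write out $\Phi_n^{+n_0}$ explicitly. The shifted system has maps $\tilde\alpha_n = \alpha_{n+n_0, n}$ wait — more precisely its $n$-th term is $A_n$ with structure maps given by $a \mapsto \phi_{n+n_0}\alpha_{n+n_0,n}$ etc., but the relevant bookkeeping is: the shifted representation has $\tilde\phi_n = \phi_{n+n_0}\alpha_{n+n_0,n}$, $\tilde h_n = h_{n+n_0}\alpha_{n+n_0,n}$, and $\tilde t_n = t_{n+n_0}$. Feeding this into \eqref{eq:diag-rep2} and using the identity $\alpha_{m+n_0,\,n+n_0}\circ\alpha_{n+n_0,n} = \alpha_{m+n_0,n}$ (a routine consequence of the definition of $\alpha_{m,n}$), one finds for $a \in A_n$ and $\tilde t_m \leq t < \tilde t_{m+1}$, i.e.\ $t_{m+n_0} \leq t < t_{m+1+n_0}$, that
\begin{equation}
  \Phi_n^{+n_0}(a)(t) = \tilde h_m\big(\tilde\alpha_{m,n}(a)\big)\Big(\tfrac{t - \tilde t_m}{\tilde t_{m+1} - \tilde t_m}\Big) = h_{m+n_0}\big(\alpha_{m+n_0, n}(a)\big)\Big(\tfrac{t - t_{m+n_0}}{t_{m+1+n_0} - t_{m+n_0}}\Big).
\end{equation}
On the other hand, $\Phi_{n}(a)(t)$ for the original representation, evaluated at the same $t$ (which satisfies $t_{m+n_0} \leq t < t_{m+1+n_0}$), equals exactly $h_{m+n_0}(\alpha_{m+n_0,n}(a))\big(\tfrac{t-t_{m+n_0}}{t_{m+1+n_0}-t_{m+n_0}}\big)$. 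Hence $\Phi_n^{+n_0}(a)$ and $\Phi_n(a)$ agree on the common half-line $\mathbb R_{\geq t_{n+n_0}}$, i.e.\ $\Phi_n^{+n_0} = \rho^B \circ \Phi_n$ where $\rho^B$ is the restriction $C_b(\mathbb R_{\geq t_n}, B) \to C_b(\mathbb R_{\geq t_{n+n_0}}, B)$.

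Then I would conclude: composing with the quotient to $B_{\mathrm{as}}$, we get $\rho^B_{\infty,n}(\Phi_n^{+n_0}(a)) = \rho^B_{\infty,n}(\Phi_n(a)) = \phi_{\mathrm{as}}(\alpha_{\infty,n}(a))$ for all $n$ and all $a \in A_n$. Since the $^*$-homomorphism $A \to B_{\mathrm{as}}$ induced by a diagrammatic representation is characterized by this compatibility with the maps $\alpha_{\infty,n}$ (which have dense union of images in $A$), the induced map $(\phi_{+n_0})_{\mathrm{as}}$ coincides with $\phi_{\mathrm{as}}$, and therefore $\phi_{+n_0} \cong \phi$ by the criterion recalled after Definition~\ref{def:asymp-morph}. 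Alternatively one can invoke Lemma~\ref{lem:close-diagrams} directly with $\epsilon_n = 1/n$ and any admissible choice of finite sets, after reindexing, but the direct argument above is cleaner since the two families of homotopies literally agree after the shift rather than merely being close.

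The only point that requires a little care — and the one I would single out as the main bookkeeping obstacle — is keeping the two indexing conventions straight: the "local time" reparametrization $\frac{t-t_m}{t_{m+1}-t_m}$ in \eqref{eq:diag-rep2} must be matched up correctly between the shifted and unshifted systems, and one must verify the composition identity $\alpha_{m+n_0,\,n+n_0}\alpha_{n+n_0,n} = \alpha_{m+n_0,n}$ that makes the two formulas agree on the nose. Once the indices are lined up this is immediate, and no genuine analysis (density, estimates, limits) is needed beyond the already-established fact that $B_{\mathrm{as}}$ only sees germs at infinity.
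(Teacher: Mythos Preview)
Your proposal is correct and follows essentially the same approach as the paper: both arguments boil down to the observation that $\Phi_n^{+n_0}$ is literally the restriction of $\Phi_n$ to $\mathbb R_{\geq t_{n+n_0}}$, so the induced maps to $B_{\rm as}$ agree. The only cosmetic difference is that the paper reduces to $n_0 = 1$ and inducts, whereas you handle general $n_0$ directly; also note a small slip in your bookkeeping---the composition identity actually used is $\alpha_{m+n_0,m}\circ\alpha_{m,n} = \alpha_{m+n_0,n}$ (coming from $\tilde h_m = h_{m+n_0}\alpha_{m+n_0,m}$ applied to $\alpha_{m,n}(a)$), not the one you wrote, though both are instances of the same inductive-system rule.
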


\begin{proof}
  For notational convenience, we take $n_0 = 1$.
  Note that applying this special case inductively proves the general case.
	
  For all $n$, define $\Phi_n$ as in \eqref{eq:diag-rep2} and let $\rho^B_n$ be the restriction map as in \eqref{eq:diag-rep3}.
  In parallel with \eqref{eq:diag-rep2}, define $\Phi'_n \colon A_n \rightarrow C_b(\mathbb R_{\geq t_{n +1}}, B)$ by
  \begin{equation}
    \Phi'_n(a)(t) = h_m\big(\alpha_{m, n}(a)\big)\Big(\frac{t - t_m}{t_{m+1} - t_m} \Big),\ \ a \in A_n,\ t_m \leq t < t_{m+1},\ m \geq n+1.
  \end{equation}
  Then there is a commuting diagram 
  \begin{equation}
    \begin{tikzcd}
      A_1 \arrow{r}{\alpha_1} \arrow{d}{\Phi'_1} & A_2 \arrow{r}{\alpha_2} \arrow{d}{\Phi'_2} & A_3 \arrow{r}{\alpha_3} \arrow{d}{\Phi'_3} & \cdots &[-5ex] A\hphantom{,} \arrow[dashed]{d}{\phi'_{\rm as}}	\\
      C_b(\mathbb R_{\geq t_2}, B) \arrow{r}{\rho^B_2} & C_b(\mathbb R_{\geq t_3}, B) \arrow{r}{\rho^B_3} & C_b(\mathbb R_{\geq t_4}, B) \arrow{r}{\rho^B_4} \arrow{r} & \cdots & B_{\rm as},
    \end{tikzcd}
  \end{equation}
  defining an asymptotic morphism $\phi' \colon A \xrightarrow\approx B$ corresponding to $(\underline \phi, \underline h, \underline t)_{+1}$.
  We have $\Phi'_n = \rho^B_n \Phi_n$ for all $n \geq 1$, and hence
  \begin{equation}
    \phi'_{\rm as} \alpha_{\infty, n} = \rho^B_{\infty, n+1} \Phi'_n = \rho^B_{\infty, n} \Phi_n = \phi_{\rm as} \alpha_{\infty, n}
  \end{equation}
  for all $n \geq 1$.  Therefore, $\phi'_{\rm as} = \phi_{\rm as}$, and hence $\phi' \cong \phi$.
\end{proof}

\section{Approximate decomposability}
\label{sec:qdl}

An extension $0 \rightarrow \ker(\pi) \rightarrow E \xrightarrow \pi B \rightarrow 0$ of $C^*$-algebras is called \emph{quasidiagonal} if $\ker(\pi)$ admits a approximate unit of projections $(p_\lambda)$ that is quasicentral in $E$, in the sense that $\|p_\lambda e - e p_\lambda\| \rightarrow 0$ for all $e \in E$.
Quasidiagonal extensions were introduced in \cite{Brown84} in connection with extension theory.
It is typically required that the net $(p_\lambda)$ is an increasing sequence of projections, but it is not hard to show that this coincides with the present definition when the $C^*$-algebras involved are separable.

For our purposes, the most important property of a quasidiagonal extension is the existence of an approximate direct sum decomposition of such an extension.
For a quasidiagonal extension $0 \rightarrow \ker(\pi) \rightarrow E \xrightarrow \pi B \rightarrow 0$, if $\sigma \colon B \rightarrow E$ is any (set-theoretic) splitting, then the maps
\begin{equation}
  \sigma_\lambda \colon B \rightarrow E\colon b \mapsto  (1 - p_\lambda) \sigma(b)
\end{equation}
form an approximate morphism, as do the maps
\begin{equation}
  \rho_\lambda \colon E \rightarrow \ker(\pi) \colon e \mapsto
  p_\lambda e.
\end{equation}
Further, the approximate morphisms
\begin{align}
  \ker(\pi) \oplus B \rightarrow E&\colon (x, b) \mapsto x + \sigma_\lambda(b)
                                    \intertext{and}
                                    E \rightarrow \ker(\pi) \oplus B &\colon e \mapsto \big(\rho_\lambda(e), \pi(e)\big)
\end{align}
are approximately inverses of each other.
In this sense, there is an approximate decomposition of $E$ into a direct sum $\ker(\pi) \oplus B$

We introduce the following weaker condition on an extension that still allows for the approximate direct sum decomposition (Theorem~\ref{prop:approx-direct-sum}).
Roughly, we require an approximate splitting of the quotient map that is both an approximate morphism and an approximate bimodule map over the extension algebra.

\begin{definition}
  \label{def:qdelicious}
  An extension $0 \rightarrow \ker(\pi) \rightarrow E \xrightarrow \pi B \rightarrow 0$ is \emph{approximately decomposable}, or more briefly, $\pi$ is \emph{approximately decomposable}, if there is an approximate morphism $(\sigma_\lambda)\colon B\to E$ such that
  \begin{enumerate}
  \item\label{qdl-split} $\lim_\lambda \|\pi(\sigma_\lambda(b)) - b\| = 0$ and
  \item\label{qdl-bimod} $\lim_\lambda \| \sigma_\lambda(\pi(e)b) - e\sigma_\lambda(b) \|
    = 0$
    % \item $\lim_\lambda \| \sigma_\lambda(b_1 b_2) - \sigma_\lambda(b_1)
    %   \sigma_\lambda(b_2)\| = 0$
  \end{enumerate}
  for all $b\in B$ and $e\in E$.
\end{definition}

The following gives a connection between quasidiagonality and approximate decomposability.
The condition in the second sentence of the theorem is essentially the definition of weak quasidiagonality in \cite[Definition~2.1]{Manuilov-Thomsen00}, which is defined for separable extensions with stable ideal and requires that the net of projections be an increasing sequence.
In particular, all weakly quasidiagonal extensions are approximately decomposable.

\begin{proposition}\label{prop:qd-qdl}
  Every quasidiagonal extension  is approximately decomposable.  More generally, if $\pi \colon E \rightarrow B$ is a surjective $^*$-homomorphism and there is a net of projections $(p_\lambda)$ in $M(E)$, the multiplier algebra of $E$, such that
  \begin{enumerate}
  \item\label{wqd1}  $p_\lambda E \subseteq \ker(\pi)$,
  \item\label{wqd2}  $\lim_\lambda \|p_\lambda e - e p_\lambda\| = 0$ for all $e \in E$, and
  \item\label{wqd3} $\lim_\lambda \|p_\lambda x - x \| = 0$ for all $x \in \ker(\pi)$,
  \end{enumerate}
  then $\pi$ is approximately decomposable.
\end{proposition}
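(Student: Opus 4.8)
The first sentence is the special case of the second in which $(p_\lambda)$ is a quasicentral approximate unit of projections for $\ker(\pi)$: regarding each $p_\lambda$ as a multiplier of $E$ through the inclusion $\ker(\pi)\subseteq M(E)$, property \ref{wqd1} holds because $\ker(\pi)$ is an ideal, \ref{wqd2} is quasicentrality, and \ref{wqd3} expresses that $(p_\lambda)$ is an approximate unit for $\ker(\pi)$. So I would only prove the general statement.

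The plan is to cut down a fixed set-theoretic section $\sigma\colon B\to E$ of $\pi$. For each $\lambda$ set $\sigma_\lambda(b)=(1-p_\lambda)\sigma(b)$, which lies in $E$ since $E$ is an ideal in $M(E)$. Because $p_\lambda\sigma(b)\in p_\lambda E\subseteq\ker(\pi)$ by \ref{wqd1}, one in fact gets $\pi(\sigma_\lambda(b))=\pi(\sigma(b))=b$ exactly, which gives \ref{qdl-split}. For the bimodule condition \ref{qdl-bimod}, I would observe that $\sigma(\pi(e)b)-e\sigma(b)\in\ker(\pi)$ and rely on the identity
\[
  \sigma_\lambda(\pi(e)b)-e\sigma_\lambda(b)=(1-p_\lambda)\big(\sigma(\pi(e)b)-e\sigma(b)\big)+(ep_\lambda-p_\lambda e)\,\sigma(b);
\]
its first summand tends to $0$ by \ref{wqd3} and its second by \ref{wqd2}.

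It remains to verify that $(\sigma_\lambda)$ is an approximate morphism (Definition~\ref{def:approx-morph}), and each of the three defects is handled by the same mechanism: it splits into a commutator with $p_\lambda$, controlled by \ref{wqd2}, plus a term $(1-p_\lambda)z$ with $z\in\ker(\pi)$, controlled by \ref{wqd3}; throughout one uses the trivial remark that $c-d\in\ker(\pi)$ whenever $c,d\in E$ satisfy $\pi(c)=\pi(d)$. Concretely, the linearity relation reduces to $(1-p_\lambda)\big(\sigma(r_1a_1+r_2a_2)-r_1\sigma(a_1)-r_2\sigma(a_2)\big)\to 0$; the adjoint relation, since $1-p_\lambda$ is self-adjoint, to $\big[1-p_\lambda,\sigma(a)^*\big]+(1-p_\lambda)\big(\sigma(a^*)-\sigma(a)^*\big)\to 0$; and, using $(1-p_\lambda)\sigma(a_1)p_\lambda=(1-p_\lambda)[\sigma(a_1),p_\lambda]$ (valid since $(1-p_\lambda)p_\lambda=0$), multiplicativity to $(1-p_\lambda)\big(\sigma(a_1a_2)-\sigma(a_1)\sigma(a_2)\big)+(1-p_\lambda)[\sigma(a_1),p_\lambda]\,\sigma(a_2)\to 0$.

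I do not anticipate a genuine obstacle; the whole argument is bookkeeping. The only points meriting care are keeping straight which products really lie in $E$ rather than merely in $M(E)$ — so that each $\sigma_\lambda$ maps $B$ into $E$ — and the systematic, tacit use of the remark above. In particular, no continuity or linearity of the section $\sigma$ is needed.
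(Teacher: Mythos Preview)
Your proposal is correct and follows essentially the same approach as the paper: define $\sigma_\lambda(b)=(1-p_\lambda)\sigma(b)$ for a section $\sigma$ of $\pi$, then use \ref{wqd1} for the splitting, and systematically reduce each remaining condition to a commutator term (handled by \ref{wqd2}) plus a $(1-p_\lambda)z$ term with $z\in\ker(\pi)$ (handled by \ref{wqd3}). The only cosmetic difference is that the paper starts from a self-adjoint linear section, so linearity of each $\sigma_\lambda$ is automatic and the adjoint defect reduces to a pure commutator, whereas you take an arbitrary set-theoretic section and verify all three approximate-morphism conditions by the same mechanism; this costs you nothing and, as you note, shows that no regularity of $\sigma$ is needed.
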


\begin{proof}
  It suffices to prove the second sentence.
  Let $\sigma \colon B \rightarrow E$ be a self-adjoint linear map with $\pi \sigma = \mathrm{id}_B$ and define $\sigma_\lambda \colon B \rightarrow E$ by $\sigma_\lambda(b) = (1 - p_\lambda) \sigma(b) $.
  For $b_1, b_2 \in B$, \ref{wqd2} implies
  \begin{equation}\label{eq:wqd1}
    \lim_\lambda \|\sigma_\lambda(b_1) \sigma_\lambda(b_2) - (1 - p_\lambda) \sigma(b_1) \sigma(b_2) \| = 0.
  \end{equation}
  Since $\pi \sigma = \mathrm{id}_B$ and $\pi$ is a $^*$-homomorphism, we have $\sigma(ab) - \sigma(a) \sigma(b) \in \ker(\pi)$.
  Therefore, \ref{wqd3} yields
  \begin{equation}\label{eq:wqd2}
    \lim_\lambda \|  (1 - p_\lambda) \sigma(b_1)\sigma(b_2) -  \sigma_\lambda(b_1b_2)\| = 0.
  \end{equation}
  Combining \eqref{eq:wqd1} and \eqref{eq:wqd2} shows $(\sigma_\lambda) \colon B \rightarrow E$ is approximately multiplicative.  Further, as $\sigma$ and $p_\lambda$ are self-adjoint, \ref{wqd2} implies
  \begin{equation}
  	\lim_{\lambda} \|\sigma_\lambda(b^*) - \sigma_\lambda(b)^*\| = 0, \quad b \in B.
  \end{equation}
  Since each $\sigma_\lambda$ is linear, we have that $(\sigma_\lambda) \colon B \rightarrow E$ is an approximate morphism.  For $b \in B$ and $e \in E$, $e \sigma(b) - \sigma(\pi(e)b) \in \ker(\pi)$, so using \ref{wqd2} and \ref{wqd3} as above, we have Definition~\ref{def:qdelicious}\ref{qdl-bimod} holds.
  Moreover, \ref{wqd1} implies $p_\lambda \sigma(b) \in \ker(\pi)$, and hence $\pi(\sigma_\lambda(b)) = b$ for all $b \in B$ and $\lambda$.  In particular, Definition~\ref{def:qdelicious}\ref{qdl-split} holds.
\end{proof}

The converse of the previous result holds in the unital case.

\begin{proposition}
  Every unital approximately decomposable extension is quasidiagonal.
\end{proposition}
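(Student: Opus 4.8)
The plan is to build a quasicentral approximate unit of projections for $\ker(\pi)$ directly out of the approximate splitting, exploiting that $E$ is unital. Since $E$ is unital and $\pi$ is surjective, $B$ is unital with $1_B = \pi(1_E)$. Let $(\sigma_\lambda)\colon B \to E$ be an approximate morphism witnessing approximate decomposability. By Proposition~\ref{prop:ptwise-bounded}\ref{ptbd2} we may assume each $\sigma_\lambda$ is self-adjoint and linear: replacing $(\sigma_\lambda)$ by an asymptotically equal net leaves Definition~\ref{def:qdelicious}\ref{qdl-split} and \ref{qdl-bimod} intact (each term that changes does so by an amount tending to $0$, using $\|\pi\| \le 1$ and the bound $\|e\|$ on the left-multiplication in \ref{qdl-bimod}). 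Set
\[
  g_\lambda = 1_E - \sigma_\lambda(1_B) \in E .
\]

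The first step is to record the relevant asymptotic properties of the net $(g_\lambda)$, all of which follow from \ref{qdl-split}, \ref{qdl-bimod}, and the approximate-morphism axioms:
\begin{enumerate}
  \item $g_\lambda = g_\lambda^*$, since $\sigma_\lambda$ is self-adjoint;
  \item $\|g_\lambda^2 - g_\lambda\| = \|\sigma_\lambda(1_B)^2 - \sigma_\lambda(1_B)\| \to 0$, by approximate multiplicativity at $(1_B,1_B)$;
  \item $\pi(g_\lambda) = 1_B - \pi(\sigma_\lambda(1_B)) \to 0$, by \ref{qdl-split};
  \item $\|g_\lambda x - x\| \to 0$ and $\|x g_\lambda - x\| \to 0$ for every $x \in \ker(\pi)$: applying \ref{qdl-bimod} with $b = 1_B$ and $e = x$ gives $x\sigma_\lambda(1_B) = \sigma_\lambda(\pi(x)1_B) - o(1) = \sigma_\lambda(0) - o(1) = -o(1)$ (using linearity, $\sigma_\lambda(0)=0$), so $x\sigma_\lambda(1_B) \to 0$; the other side follows by applying this to $x^* \in \ker(\pi)$ and taking adjoints (using self-adjointness of $\sigma_\lambda$);
  \item $\|g_\lambda e - e g_\lambda\| \to 0$ for every $e \in E$: applying \ref{qdl-bimod} with $b=1_B$ gives $\sigma_\lambda(\pi(e)) - e\sigma_\lambda(1_B) \to 0$; applying it to $e^*$ and taking adjoints, and using that $\sigma_\lambda$ and $\pi$ are self-adjoint, gives $\sigma_\lambda(\pi(e)) - \sigma_\lambda(1_B)e \to 0$; subtracting yields $[\sigma_\lambda(1_B),e]\to 0$, hence $[g_\lambda, e] \to 0$.
\end{enumerate}

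The second step is to pass from the almost-projections $g_\lambda$ to genuine projections. Fix a continuous function $\chi \colon \mathbb{R} \to [0,1]$ with $\chi \equiv 0$ on $(-\infty, \tfrac14]$ and $\chi \equiv 1$ on $[\tfrac34, \infty)$. By (ii) there is $\lambda_0$ such that for $\lambda \geq \lambda_0$ the self-adjoint element $g_\lambda$ satisfies $\|g_\lambda^2 - g_\lambda\|$ small enough that $\mathrm{sp}(g_\lambda) \cap [\tfrac14,\tfrac34] = \emptyset$; in fact $\mathrm{sp}(g_\lambda)$ then lies in a neighbourhood of $\{0,1\}$ whose radius tends to $0$. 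Hence for $\lambda \geq \lambda_0$ the element $p_\lambda := \chi(g_\lambda)$ is a projection and $\|p_\lambda - g_\lambda\| \to 0$ (estimate $|\chi(t)-t|$ on $\mathrm{sp}(g_\lambda)$); set $p_\lambda = 0$ for $\lambda \not\geq \lambda_0$, which changes only a non-cofinal part of the net. By (iii), $\|\pi(g_\lambda)\| \to 0$, so eventually $\chi$ vanishes on $\mathrm{sp}(\pi(g_\lambda))$, whence $\pi(p_\lambda) = \chi(\pi(g_\lambda)) = 0$ and $p_\lambda \in \ker(\pi)$.

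Finally, combining $\|p_\lambda - g_\lambda\| \to 0$ with (iv) gives $p_\lambda x \to x$ and $x p_\lambda \to x$ for all $x \in \ker(\pi)$, so $(p_\lambda)$ is an approximate unit of projections for $\ker(\pi)$; and combining it with (v) gives $\|p_\lambda e - e p_\lambda\| \to 0$ for all $e \in E$, so $(p_\lambda)$ is quasicentral in $E$. Thus the extension is quasidiagonal. The argument is largely routine; the point demanding the most care is item (v)---recovering quasicentrality of $1_E - \sigma_\lambda(1_B)$ in all of $E$---which uses the bimodule condition \ref{qdl-bimod} in full (for $e$ and for $e^*$), together with the observation that unitality is exactly what makes $1_E - \sigma_\lambda(1_B)$ available as the relevant almost-projection.
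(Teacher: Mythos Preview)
Your proof is correct and follows essentially the same approach as the paper: both identify $1_E - \sigma_\lambda(1_B)$ as an approximate projection that is approximately in $\ker(\pi)$, is an approximate unit for $\ker(\pi)$, and is quasicentral in $E$ (all via the bimodule condition), then perturb to genuine projections. The only cosmetic difference is ordering: the paper first moves into $\ker(\pi)$ and then perturbs to a projection, whereas you first apply functional calculus to obtain a projection and then observe $\pi(p_\lambda)=\chi(\pi(g_\lambda))=0$; your preliminary reduction to self-adjoint linear $\sigma_\lambda$ via Proposition~\ref{prop:ptwise-bounded}\ref{ptbd2} makes this slightly cleaner.
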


\begin{proof}
  Let $0 \rightarrow \ker(\pi) \rightarrow E \xrightarrow \pi B \rightarrow 0$ be an approximately decomposable extension such that $E$ is unital and fix a approximate morphism $(\sigma_\lambda) \colon B \rightarrow E$ as in Definition~\ref{def:qdelicious}.
  By Definition~\ref{def:qdelicious}\ref{qdl-split}, we have
  \begin{equation}\label{eq:qd1}
    \lim_\lambda \big\| \pi\big( 1_E - \sigma_\lambda(1_B) \big) \big\| = 0,
  \end{equation}
  and hence there is a net $(p'_\lambda) \subseteq \ker(\pi)$ such that
  \begin{equation}\label{eq:qd2}
    \lim_\lambda \|1_E - \sigma_\lambda(1_B) - p_\lambda'\| = 0.
  \end{equation}
  Since $\sigma_\lambda$ is a approximate morphism, we obtain $\lim_\lambda\|p_\lambda' - (p_\lambda')^*p_\lambda' \| = 0$.
  Therefore, there is a net of projections $(p_\lambda) \in \ker(\pi)$ such that $\lim_\lambda \|p_\lambda - p_\lambda'\| = 0$.
  Then \eqref{eq:qd2} implies
  \begin{equation}\label{eq:qd3}
    \lim_\lambda \|1_E - \sigma_\lambda(1_B) - p_\lambda\| = 0.
  \end{equation}
  We will show that $(p_\lambda)$ is an approximate unit for $\ker(\pi)$ that is quasicentral in $E$.
	
  Let $x \in \ker(\pi)$ and $e \in E$ be given.
  Using the bimodule property of Definition~\ref{def:qdelicious}\ref{qdl-bimod},
  \begin{equation}\label{eq:qd4}
    x\big( 1_E -\sigma_\lambda(1_B) \big) = x - x \sigma_\lambda(1_B) \approx x - \sigma_\lambda(\pi(x) 1_B) = x,
  \end{equation}
  where ``$\approx$'' means equality in the limit over $\lambda$.
  It follows from \eqref{eq:qd3} and \eqref{eq:qd4} that $(p_\lambda)$ is an approximate unit for $\ker(\pi)$.
  Similarly, Definition~\ref{def:qdelicious}\ref{qdl-bimod} implies
  \begin{align}
  	e \big(1_E - \sigma_\lambda(1_B)\big) &= e - e \sigma_\lambda(1_B) \approx e - \sigma_\lambda\big(\pi(e)\big),
  	\intertext{and replacing $e$ and $x$ with $e^*$ and $x^*$ and taking adjoints shows}
    \big( 1_E - \sigma_\lambda(1_B) \big) e &= e - \sigma_\lambda(1_B) e \approx e - \sigma_\lambda\big(\pi(e)\big).         
  \end{align}
  These two approximations combine with \eqref{eq:qd3} to show $\lim_\lambda \|ep_\lambda - p_\lambda e\| = 0$.
\end{proof}

The following lemma shows that one can perturb the approximate morphism in Definition~\ref{def:qdelicious} to be as in Proposition~\ref{prop:ptwise-bounded}\ref{ptbd2} while also satisfying Definition~\ref{def:qdelicious}\ref{qdl-split} exactly instead of approximately.
This will be further strengthened in Proposition~\ref{prop:approx-direct-sum}.

\begin{lemma}
  \label{lem:qdelicious-bounded}
  Let $\pi \colon E\to B$ be a surjective approximately decomposable $^*$-ho\-mo\-mor\-phism between $C^*$-algebras and suppose $(\sigma_\lambda)\colon B\to E$ is as in Definition~\ref{def:qdelicious}.
  Then there is an approximate morphism $(\sigma_\lambda')\colon B\to E$, indexed by the same directed set, that is self-adjoint, linear, pointwise-bounded, satisfies $\lim_\lambda \| \sigma_\lambda'(b) - \sigma_\lambda(b)\| = 0$ for all $b\in B$, and is such that $\pi \sigma_\lambda' = \id_B$ for all $\lambda\in \Lambda$.
\end{lemma}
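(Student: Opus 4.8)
The plan is to first invoke Proposition~\ref{prop:ptwise-bounded} to replace $(\sigma_\lambda)$ by an approximate morphism that is already linear, self-adjoint, and pointwise-bounded, and then to perturb this net by a linear map that is norm-null (pointwise) and makes the splitting of $\pi$ exact. The point of the second step is to perform the correction \emph{inside the ideal $c_0(\Lambda, E)$ of $\ell^\infty(\Lambda, E)$}, where any linear lift is automatically norm-null even though $\pi$ need not admit a bounded linear section.

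In detail: by Proposition~\ref{prop:ptwise-bounded}\ref{ptbd2} there is a self-adjoint, linear, pointwise-bounded approximate morphism $(\tau_\lambda)\colon B \to E$ with $\lim_\lambda\|\tau_\lambda(b) - \sigma_\lambda(b)\| = 0$ for all $b \in B$, and since $\pi$ is contractive the splitting condition Definition~\ref{def:qdelicious}\ref{qdl-split} transfers from $(\sigma_\lambda)$ to give $\lim_\lambda\|\pi(\tau_\lambda(b)) - b\| = 0$. Pointwise-boundedness of $(\tau_\lambda)$ makes $b \mapsto (\tau_\lambda(b))_\lambda$ a well-defined linear self-adjoint map $\tilde\tau\colon B \to \ell^\infty(\Lambda, E)$. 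Writing $\bar\pi\colon \ell^\infty(\Lambda, E) \to \ell^\infty(\Lambda, B)$ for the $^*$-homomorphism induced by $\pi$ and $\kappa\colon B \to \ell^\infty(\Lambda, B)$ for the embedding as constant nets, the transferred splitting condition says precisely that $\delta := \bar\pi\tilde\tau - \kappa$ takes values in $c_0(\Lambda, B)$, so $\delta\colon B \to c_0(\Lambda, B)$ is linear and self-adjoint. Now $\bar\pi$ restricts to a surjection $c_0(\Lambda, E) \to c_0(\Lambda, B)$ (given $(z_\lambda) \in c_0(\Lambda, B)$, choose for each $\lambda$ a lift $e_\lambda \in E$ of $z_\lambda$ with $\|e_\lambda\| \le 2\|z_\lambda\|$, which forces $e_\lambda \to 0$), hence admits a linear right inverse; averaging a linear right inverse $r_0$ with the map $z \mapsto r_0(z^*)^*$ produces a self-adjoint linear right inverse $r\colon c_0(\Lambda, B) \to c_0(\Lambda, E)$. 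I would then set $\sigma'_\lambda(b) := \tilde\sigma'(b)_\lambda$, where $\tilde\sigma' := \tilde\tau - r\circ\delta\colon B \to \ell^\infty(\Lambda, E)$. The required properties are then immediate: $\tilde\sigma'$ is linear, self-adjoint, and valued in $\ell^\infty(\Lambda, E)$, so $(\sigma'_\lambda)$ is linear, self-adjoint, and pointwise-bounded; since $r\circ\delta$ is valued in $c_0(\Lambda, E)$ we get $\lim_\lambda\|\sigma'_\lambda(b) - \tau_\lambda(b)\| = 0$ and hence $\lim_\lambda\|\sigma'_\lambda(b) - \sigma_\lambda(b)\| = 0$, which in particular forces $(\sigma'_\lambda)$ to be an approximate morphism, being pointwise norm-asymptotic to $(\sigma_\lambda)$; and, using that $\bar\pi\circ(r\circ\delta) = \delta$, we get $\bar\pi\tilde\sigma' = \bar\pi\tilde\tau - \delta = (\delta + \kappa) - \delta = \kappa$, i.e. $\pi\sigma'_\lambda = \id_B$ for every $\lambda$. (The bimodule property Definition~\ref{def:qdelicious}\ref{qdl-bimod} passes from $(\sigma_\lambda)$ to $(\sigma'_\lambda)$ for the same reason, although it is not needed here.)

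The one step that needs care is the routing of the correction through the ideal-level map rather than through a section of $\pi$ itself. The naive approach---subtracting from $\tau_\lambda(b)$ a lift of the defect $\pi(\tau_\lambda(b)) - b$ obtained from a fixed linear section $s$ of $\pi$---does produce an exact splitting, but fails to satisfy $\lim_\lambda\|\sigma'_\lambda(b) - \sigma_\lambda(b)\| = 0$: unless the extension is semisplit, $s$ must be unbounded, and it can blow up along the null net $(\pi(\tau_\lambda(b)) - b)_\lambda$. Lifting that null net all at once, through a (possibly unbounded) linear section of $c_0(\Lambda, E) \to c_0(\Lambda, B)$, avoids the issue because such a section lands in $c_0(\Lambda, E)$ by construction.
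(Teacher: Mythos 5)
Your proof is correct, and it reaches the conclusion by a more hands-on route than the paper, which argues via a pullback. The paper's proof first normalizes $(\sigma_\lambda)$ exactly as you do, using Proposition~\ref{prop:ptwise-bounded}\ref{ptbd2}; it then forms the pullback $P$ of $\pi_\Lambda \colon E_\Lambda \to B_\Lambda$ and $q^B_\Lambda \colon \ell^\infty(\Lambda, B) \to B_\Lambda$, maps $B$ into $P$ by the universal property (this is where Definition~\ref{def:qdelicious}\ref{qdl-split} enters, giving $\pi_\Lambda \sigma_\Lambda = q^B_\Lambda \tilde\iota^B_\Lambda$), observes that the natural map $\ell^\infty(\Lambda, E) \to P$ is surjective, and finally takes a self-adjoint linear lift $\rho \colon B \to \ell^\infty(\Lambda, E)$ whose coordinates are the desired $(\sigma'_\lambda)$. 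Your argument unpacks the same mechanism without introducing $P$: the defect $\delta(b) = \big(\pi(\tau_\lambda(b)) - b\big)_\lambda$ lands in $c_0(\Lambda, B)$, and you remove it using a self-adjoint linear section of the restricted surjection $c_0(\Lambda, E) \to c_0(\Lambda, B)$. In fact your norm-controlled lifting of null nets is precisely the fact that underlies the surjectivity of $\ell^\infty(\Lambda, E) \to P$, which the paper asserts only briefly, and your $\tilde\sigma' = \tilde\tau - r\circ\delta$ is an explicit lift of the paper's $\tilde\sigma_\Lambda$; so the two proofs deliver the same thing, yours being more self-contained and explicit about where the correction must live, the paper's being shorter once the pullback formalism is set up. Your closing remark correctly identifies why the naive correction through an (unbounded) linear section of $\pi$ itself fails. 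Two routine points deserve a line in a polished write-up: the linear right inverse $r_0$ of $c_0(\Lambda, E) \to c_0(\Lambda, B)$ exists for purely algebraic reasons (choose a linear complement of the kernel), since no continuity of $r$ is needed; and the claim that pointwise norm-asymptotic agreement with $(\sigma_\lambda)$ makes $(\sigma'_\lambda)$ an approximate morphism uses pointwise-boundedness (or Proposition~\ref{prop:ptwise-bounded}\ref{ptbd1}) in the multiplicativity estimate, which is available here.
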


\begin{proof}
  We may assume that $(\sigma_\lambda)_{\lambda\in \Lambda}$ is self-adjoint, linear, and pointwise-bounded by Proposition~\ref{prop:ptwise-bounded}.
  Define $P$ to be the pullback of $E_\Lambda$ and $\ell^\infty(\Lambda, B)$ along the map $\pi_\Lambda \colon E_\Lambda\to B_\Lambda$ induced by $\pi$ and the canonical surjection $q_\Lambda^B \colon \ell^\infty(\Lambda, B) \to B_\Lambda$.
  The rest of the proof is essentially contained in the diagram
  \begin{equation}
    \label{eq:qdelicious-bounded-pullback}
    \begin{tikzcd}
      B
      \arrow[bend right = 25]{dddr} [swap]{\sigma_\Lambda}
      \arrow[bend left = 15]{drrr} {\tilde\iota^B_\Lambda}
      \arrow[dashed]{dr}{\tilde\sigma_\Lambda}
      & & & &\\
      & P
      \ar{rr}
      \ar{dd}
      & & \ell^\infty(\Lambda, B)
      \arrow{dd}{q^B_\Lambda}\\
      & & \ell^\infty(\Lambda, E)
      \arrow[from=uull, dashed, crossing over, bend right]{}[swap]{\rho}
      \arrow[dashed]{ul}
      \arrow{ur}[swap]{\Lambda\pi}
      \arrow{dl}[yshift = 4pt]{q^E_\Lambda}
      & \\
      & E_\Lambda
      \arrow{rr}[swap]{\pi_\Lambda}
      & & B_\Lambda
    \end{tikzcd}
  \end{equation}
  as we will explain.
  At the moment, the solid arrows in \eqref{eq:qdelicious-bounded-pullback} are defined and the diagram formed by removing the dashed arrows and $B$ commutes.
  
  The approximate morphism $(\sigma_\lambda)$ induces a $^*$-homomorphism $\sigma_\Lambda\colon B\to E_\Lambda$.
  Let $\tilde\iota^B_\Lambda\colon B\to \ell^\infty(\Lambda, B)$ be the canonical embedding.
  Then $\pi_\Lambda \sigma_\Lambda = q^B_\Lambda \tilde\iota^B_\Lambda$, and so, from the universal property of $P$, we obtain a $^*$-homomorphism $\tilde\sigma_\Lambda\colon B\to P$ making each of the triangles with vertices $B$, $P$, $E_\Lambda$ and $B$, $P$, $\ell^\infty(\Lambda, B)$ commute.
  A similar argument gives the existence of a $^*$-homomorphism $\ell^\infty(\Lambda, E)\to P$ making the triangles with vertices $\ell^\infty(\Lambda, E)$, $P$, $E_\Lambda$ and $\ell^\infty(\Lambda, E)$, $P$, $\ell^\infty(\Lambda, B)$ commute. Note that this map is surjective by the surjectivity of $q^E_\Lambda$.
	
  Let $\rho\colon B\to \ell^\infty(\Lambda, E)$ be a self-adjoint linear lift of $\tilde\sigma_\Lambda$ along the unlabeled dashed arrow.
  Commutativity of \eqref{eq:qdelicious-bounded-pullback} shows that $\sigma_\Lambda = q^E_\Lambda \rho$.
  If $(\sigma_\lambda')$ is the approximate morphism determined by the coordinates of $\rho$, then $(\sigma_\lambda')$ is a self-adjoint linear pointwise-bounded approximate morphism with $\lim_\lambda \|\sigma'_\lambda(b) - \sigma_\lambda(b) \| = 0$ for all $b \in B$ (since $\sigma_\Lambda = q^E_\Lambda \rho$).
  Further, since $(\Lambda \pi)\rho = \tilde\iota^B_\Lambda$, we have $\pi\sigma'_\lambda = \mathrm{id}_B$ for all $\lambda$.
\end{proof}

The following result gives an approximate direct sum decomposition of approximately decomposable extensions, which is the reason for their name.
This is usually the most relevant property of quasidiagonal extensions and why we believe approximately decomposable extensions provide a useful generalization.
The converse also holds---see Remark~\ref{rem:approx-direct-sum}.

\begin{proposition}
  \label{prop:approx-direct-sum}
  Let $0\rightarrow J \xrightarrow{\iota} E \xrightarrow \pi B \rightarrow 0$ be an approximately decomposable extension of $C^*$-algebras.
  Then there are self-adjoint linear pointwise-bounded approximate morphisms $(\rho_\lambda)\colon E\to J$ and $(\sigma_\lambda)\colon B\to E$, indexed by the same directed set, such that for every $\lambda$,
  \begin{equation}\label{eq:approx-direct-sum}
    \rho_\lambda \iota = \id_{J}, \quad
    \pi\sigma_\lambda = \id_B, \quad\text{and} \quad
    \iota \rho_\lambda + \sigma_\lambda \pi = \id_E.
  \end{equation}
\end{proposition}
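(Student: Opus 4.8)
The plan is to build everything out of the approximate morphism $(\sigma_\lambda)\colon B\to E$ furnished by Definition~\ref{def:qdelicious}. First I would apply Lemma~\ref{lem:qdelicious-bounded} to replace $(\sigma_\lambda)$ by one (keeping the name) that is self-adjoint, linear, pointwise-bounded, and satisfies $\pi\sigma_\lambda=\id_B$ \emph{exactly}; this will already be the section $(\sigma_\lambda)\colon B\to E$ claimed in the statement. The section $(\rho_\lambda)\colon E\to J$ is then essentially forced: since $\pi\bigl(e-\sigma_\lambda(\pi(e))\bigr)=\pi(e)-\pi(e)=0$, the element $e-\sigma_\lambda(\pi(e))$ lies in $\iota(J)$, so one can define $\rho_\lambda\colon E\to J$ by $\iota\rho_\lambda(e)=e-\sigma_\lambda(\pi(e))$. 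With these choices the three identities in \eqref{eq:approx-direct-sum} are immediate: $\iota\rho_\lambda+\sigma_\lambda\pi=\id_E$ by construction; $\pi\sigma_\lambda=\id_B$ by the choice of $(\sigma_\lambda)$; and for $x\in J$ one has $\iota\rho_\lambda\iota(x)=\iota(x)-\sigma_\lambda(\pi\iota(x))=\iota(x)$ because $\pi\iota=0$ and $\sigma_\lambda$ is linear, so $\rho_\lambda\iota=\id_J$ by injectivity of $\iota$.

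It then remains to check that $(\rho_\lambda)$ is a self-adjoint, linear, pointwise-bounded approximate morphism. Linearity is clear; self-adjointness follows from self-adjointness of $\sigma_\lambda$ together with the facts that $\iota,\pi$ are $^*$-homomorphisms; pointwise-boundedness follows from $\|\rho_\lambda(e)\|=\|e-\sigma_\lambda(\pi(e))\|\le\|e\|+\sup_\lambda\|\sigma_\lambda(\pi(e))\|<\infty$. The only substantive point is approximate multiplicativity, and since $\iota$ is isometric it suffices to show $\iota\rho_\lambda(e_1e_2)-\bigl(\iota\rho_\lambda(e_1)\bigr)\bigl(\iota\rho_\lambda(e_2)\bigr)\to 0$. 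Expanding and using that $\pi$ is multiplicative gives
\[
\iota\rho_\lambda(e_1e_2)-\bigl(\iota\rho_\lambda(e_1)\bigr)\bigl(\iota\rho_\lambda(e_2)\bigr)
=\bigl(e_1\sigma_\lambda(\pi(e_2))-\sigma_\lambda(\pi(e_1)\pi(e_2))\bigr)+\bigl(\sigma_\lambda(\pi(e_1))e_2-\sigma_\lambda(\pi(e_1))\sigma_\lambda(\pi(e_2))\bigr).
\]
The first bracket tends to $0$ directly by Definition~\ref{def:qdelicious}\ref{qdl-bimod} (with $e=e_1$, $b=\pi(e_2)$). For the second bracket I would first record the ``right-handed'' bimodule estimate $\|\sigma_\lambda(b\pi(e))-\sigma_\lambda(b)e\|\to 0$, obtained by taking adjoints in Definition~\ref{def:qdelicious}\ref{qdl-bimod} and using self-adjointness of $\sigma_\lambda$; applying it with $b=\pi(e_1)$, $e=e_2$ gives $\sigma_\lambda(\pi(e_1))e_2\approx\sigma_\lambda(\pi(e_1)\pi(e_2))$, while approximate multiplicativity of $(\sigma_\lambda)$ gives $\sigma_\lambda(\pi(e_1))\sigma_\lambda(\pi(e_2))\approx\sigma_\lambda(\pi(e_1)\pi(e_2))$, so the second bracket also tends to $0$.

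I do not expect a real obstacle in this argument; it is a short computation once the model for $(\sigma_\lambda)$ is fixed. The step requiring the most care is the derivation of the right-handed bimodule estimate from the left-handed one, which genuinely uses that $(\sigma_\lambda)$ is self-adjoint (and is the reason for invoking Lemma~\ref{lem:qdelicious-bounded} at the outset); one must also consistently carry along the identification of $J$ with $\ker\pi$ via $\iota$, though this is routine. Finally, I would remark (as flagged in Remark~\ref{rem:approx-direct-sum}) that the converse is trivial: given $(\rho_\lambda)$ and $(\sigma_\lambda)$ as in \eqref{eq:approx-direct-sum}, the section $(\sigma_\lambda)$ satisfies Definition~\ref{def:qdelicious}\ref{qdl-split} on the nose and Definition~\ref{def:qdelicious}\ref{qdl-bimod} follows from $\iota\rho_\lambda+\sigma_\lambda\pi=\id_E$ together with approximate multiplicativity of $(\sigma_\lambda)$.
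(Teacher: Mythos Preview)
Your proof is correct and follows essentially the same approach as the paper: invoke Lemma~\ref{lem:qdelicious-bounded} to normalize $(\sigma_\lambda)$, define $\rho_\lambda$ via $\iota\rho_\lambda=\id_E-\sigma_\lambda\pi$, and verify approximate multiplicativity by expanding and using the bimodule property together with approximate multiplicativity of $(\sigma_\lambda)$. You are in fact more explicit than the paper about one point: the paper simply invokes ``the approximate bimodule property'' on the term $\sigma_\lambda(\pi(e_1))e_2$, whereas you spell out that a right-handed version is needed and derive it from the left-handed one via self-adjointness of $\sigma_\lambda$---this is exactly the care the argument requires. One small caveat: your closing remark that the converse follows from $\iota\rho_\lambda+\sigma_\lambda\pi=\id_E$ and approximate multiplicativity of $(\sigma_\lambda)$ alone is a bit too quick; as Remark~\ref{rem:approx-direct-sum} shows, one also needs to use that $(\rho_\lambda)$ is an approximate morphism (together with $\rho_\lambda\sigma_\lambda=0$) to kill the cross term $\iota\rho_\lambda(e)\sigma_\lambda(b)$.
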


\begin{proof}
  Using Lemma~\ref{lem:qdelicious-bounded}, fix a self-adjoint linear pointwise-bounded approximate morphism $(\sigma_\lambda)\colon B\to E$ such that $\pi\sigma_\lambda = \id_B$ for all $\lambda$ and
  \begin{equation}
    \label{eq:approx-sum-cond}
    \lim_\lambda \big\| \sigma_\lambda\big(\pi(e)b\big) - e\sigma_\lambda(b) \big\| = 0, \qquad b \in B,\ e \in E.
  \end{equation}
  Notice that $\pi  (\id_E - \sigma_\lambda \pi)=0$ for all $\lambda$, so for all $\lambda$ there is a self-adjoint linear map $\rho_\lambda\colon E\to J$ such that $\iota\rho_\lambda = \id_E - \sigma_\lambda \pi$.
  The fact that $\iota$ is isometric implies
  \begin{equation}
    \|\rho_\lambda(e)\| \leq
    \|e\| + \|\sigma_\lambda(\pi(e))\|
  \end{equation}
  for all $e \in E$ and all $\lambda$. Therefore, $(\rho_\lambda)$ is pointwise-bounded.
	
  It remains to show $(\rho_\lambda)$ is an approximate morphism.
  Let $e_1, e_2 \in E$.
  Using ``$\approx$'' to mean equality in the limit over $\lambda$, we compute
  \begin{align}
    \begin{split}
      \iota\big( \rho_\lambda(e_1)  \rho_\lambda(e_2) \big) &= (\mathrm{id}_E - \sigma_\lambda \pi)(e_1)(\mathrm{id}_E - \sigma_\lambda\pi)(e_2) \\
      &= e_1 e_2 - e_1 \sigma_\lambda\big(\pi(e_2)\big) - \sigma_\lambda\big(\pi(e_1)\big) e_2 + \sigma_\lambda\big(\pi(e_1)\big) \sigma_\lambda\big(\pi(e_2)\big) \\
       &\approx e_1 e_2 - \sigma_\lambda\big(\pi(e_1) \pi(e_2)\big) \\
     &= (\mathrm{id}_E - \sigma_\lambda \pi)(e_1e_2) \\
        &= \iota\big( \rho_\lambda(e_1 e_2) \big),
    \end{split}
  \end{align}
  where the approximation on the third line follows from the approximate bimodule property of the $\sigma_\lambda$ applied to the second and third terms and the approximate multiplicative property applied to the final term.
  Since $\iota$ is isometric, this ends the proof.
\end{proof}

\begin{remark}\label{rem:approx-direct-sum}
  Any extension satisfying the conclusion of Proposition~\ref{prop:approx-direct-sum} is approximately decomposable, and, moreover, the approximate morphism $(\sigma_\lambda) \colon B \rightarrow E$ necessarily satisfies the conditions of Definition~\ref{def:qdelicious}.
  Indeed, \ref{qdl-split} clearly holds.
  To see \ref{qdl-bimod}, first note that the relations in \eqref{eq:approx-direct-sum} imply $\rho_\lambda \sigma_\lambda = 0$.
  For $x \in \ker(\pi)$ and $b \in B$, we have $x \sigma_\lambda(b) \in \ker(\pi)$, and hence $\rho_\lambda(x \sigma_\lambda(b)) = x \sigma_\lambda(b)$.
  Since $(\rho_\lambda)$ is an approximate morphism and $\rho_\lambda(\sigma_\lambda(b)) =0$ for all $\lambda$, we have $\lim_\lambda \|x\sigma_\lambda(b)\| = 0$.
  Now for $b \in B$ and $e \in E$, $x = e - \sigma_\lambda(\pi(e)) \in \ker(\pi)$, and hence $ \lim_\lambda \|e \sigma_\lambda(b) - \sigma_\lambda(\pi(e)) \sigma_\lambda(b)\| = 0$.
  Finally, \ref{qdl-bimod} follows from the approximate multiplicativity of $(\sigma_\lambda)$.
\end{remark}

\section{Semiprojectivity and shape theory}\label{sec:shape}

This section provides some variations of known results regarding semiprojectivity based mostly on results of Blackadar \cite{Blackadar85, Blackadar16}.
Some of these are known to experts, but lack an explicit statement in the literature.  Others are proved for semiprojective $C^*$-algebras, but the analogous versions for semiprojective morphisms are needed here instead.
We will use these results to prove our relative homotopy lifting result, Theorem~\ref{thm:hlp-qdl}.

A $^*$-homomorphism $\alpha \colon A_0 \rightarrow A$ between separable $C^*$-algebras is \emph{semiprojective} if for every inductive system $(\underline B, \underline \beta)$ with limit $B$ such that each $\beta_n$ is surjective and for every $^*$-homomorphism $\phi \colon A \rightarrow B$, there are an integer $n \geq 1$ and a $^*$-homomorphism $\tilde \phi \colon A \rightarrow B_n$ such that $\phi \alpha = \beta_{\infty, n} \tilde \phi$.
A separable $C^*$-algebra is \emph{semiprojective} if $\mathrm{id}_A$ is semiprojective.

The definition of semiprojectivity given above is due to Blackadar in his development of shape theory for $C^*$-algebras \cite{Blackadar85}, building on early work of Effros and Kaminker \cite{Effros-Kaminker86}.
Following \cite{Blackadar85}, a \emph{shape system} for a separable $C^*$-algebra $A$ is an inductive system $(\underline A, \underline \alpha)$ of separable $C^*$-algebras with limit $A$ such that each $\alpha_n$ is semiprojective.
Every separable $C^*$-algebra admits a shape system by \cite[Theorem~4.3]{Blackadar85}.

The following result of Dadarlat will be used to construct diagrammatic representations of asymptotic morphisms.

\begin{proposition}[{\cite[Proposition 3.14]{Dadarlat94}}]\label{prop:shape-lift}
  If $A$ is a separable $C^*$-algebra, $B$ is a $C^*$-algebra, $\phi\colon A\xrightarrow\approx B$ is an asymptotic morphism , and $(\underline{A}, \underline{\phi})$ is a shape system for $A$, then there are a strictly increasing sequence $(m_n)_{n=1}^\infty$ of positive integers and a morphism
  \begin{equation}\label{eq:shape-lift}
    \begin{tikzcd}
      A_1 \arrow{r}{\alpha_1} \arrow{d}{\Phi_1} &
      A_2 \arrow{r}{\alpha_2} \arrow{d}{\Phi_2} &
      A_3 \arrow{r}{\alpha_3} \arrow{d}{\Phi_3} & \cdots\\
      C_b(\mathbb{R}_{\geq m_1}, B) \arrow{r}{\rho_1^B} &
      C_b(\mathbb{R}_{\geq m_2}, B) \arrow{r}{\rho_2^B} &
      C_b(\mathbb{R}_{\geq m_3}, B) \arrow{r}{\rho_3^B} & \cdots
    \end{tikzcd}
  \end{equation}
  of inductive systems that induces $\phi_{\rm as}$, where the $\rho^B_n$ are the restriction maps.
\end{proposition}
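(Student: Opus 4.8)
The plan is to represent $B_{\rm as}$ itself as a sequential inductive limit with surjective connecting maps, lift $\phi_{\rm as}$ through the shape system $(\underline A,\underline\alpha)$ one stage at a time, and assemble the pieces into a diagrammatic representation in the sense of Definition~\ref{def:diag-rep}; the maps $\Phi_n$ that such a representation produces via \eqref{eq:diag-rep2} are then the ones we want.

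First, identify $B_{\rm as}=C_b(\mathbb R_+,B)/C_0(\mathbb R_+,B)$ with the inductive limit of $\big(C_b(\mathbb R_{\ge k},B)\big)_{k\ge1}$ along the restriction maps, which are surjective (extend a function by its value at the left endpoint). A bounded continuous function on a half-line determines a well-defined element of $B_{\rm as}$, regardless of how it is continued to $\mathbb R_+$, so there are compatible $^*$-homomorphisms $\kappa_k\colon C_b(\mathbb R_{\ge k},B)\to B_{\rm as}$; the induced map out of the limit is onto (restrict to $\mathbb R_{\ge1}$) and isometric, because the norm of the class of $g\in C_b(\mathbb R_{\ge k},B)$ in the limit is $\lim_{\ell\to\infty}\sup_{t\ge\ell}\|g(t)\|=\limsup_{t\to\infty}\|g(t)\|=\|\kappa_k(g)\|$. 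Write $\psi_n=\phi_{\rm as}\,\alpha_{\infty,n}\colon A_n\to B_{\rm as}$, so $\psi_{n+1}\alpha_n=\psi_n$. The feature of this particular system that matters is that each kernel $\ker\kappa_k=C_0(\mathbb R_{\ge k},B)$ is a cone, hence contractible.

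The essential input --- and, I expect, the main obstacle --- is a homotopy-lifting statement for these algebras, in the spirit of Blackadar's \cite{Blackadar16}: any two $^*$-homomorphisms $D\to C_b(\mathbb R_{\ge k},B)$ inducing the same map $D\to B_{\rm as}$ are, after enlarging $k$, joined by a homotopy of $^*$-homomorphisms into $C_b(\mathbb R_{\ge k},B)$ every stage of which induces that same map into $B_{\rm as}$. That such a homotopy exists --- as opposed to the mere asymptotic agreement that semiprojectivity by itself ever yields --- is exactly what the contractibility of $\ker\kappa_k$ buys; making this precise, together with the bookkeeping below, is carried out in \cite[Proposition~3.14]{Dadarlat94}. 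Granting it, build the representation inductively. At stage $n$, apply semiprojectivity of $\alpha_n$ to $\psi_{n+1}\colon A_{n+1}\to B_{\rm as}$ in the model above to get a (strictly increasing, and as noted below sufficiently large) integer $m_n$ and a $^*$-homomorphism $\tilde\mu_n\colon A_{n+1}\to C_b(\mathbb R_{\ge m_n},B)$ with $\kappa_{m_n}\tilde\mu_n\alpha_n=\psi_n$; set $\mu_n:=\tilde\mu_n\alpha_n\colon A_n\to C_b(\mathbb R_{\ge m_n},B)$, an honest lift of $\psi_n$. Now $\mu_{n+1}\alpha_n$ and the restriction of $\mu_n$ are two lifts of $\psi_n$ into $C_b(\mathbb R_{\ge m_{n+1}},B)$, so (after possibly enlarging $m_{n+1}$) the input above supplies a homotopy $G_n\colon A_n\to IC_b(\mathbb R_{\ge m_{n+1}},B)$ from the restriction of $\mu_n$ to $\mu_{n+1}\alpha_n$, through lifts of $\psi_n$. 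Put $\phi_n:=\mathrm{ev}^B_{m_n}\mu_n\colon A_n\to B$ and let $h_n\colon A_n\to IB$ be the concatenation of the path $t\mapsto\mathrm{ev}^B_t\mu_n$, $t\in[m_n,m_{n+1}]$, with the homotopy obtained from $G_n$ by evaluating in the half-line variable at the point $m_{n+1}$; then $\mathrm{ev}^B_0h_n=\phi_n$ and $\mathrm{ev}^B_1h_n=(\mathrm{ev}^B_{m_{n+1}}\mu_{n+1})\alpha_n=\phi_{n+1}\alpha_n$.

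Thus $(\underline\phi,\underline h,\underline m)$ is a diagrammatic representation, and the maps $\Phi_n\colon A_n\to C_b(\mathbb R_{\ge m_n},B)$ of \eqref{eq:diag-rep2} automatically satisfy $\rho^B_n\Phi_n=\Phi_{n+1}\alpha_n$, so they form a morphism of inductive systems of the required shape. It remains to see that the induced $^*$-homomorphism $A\to B_{\rm as}$ is $\phi_{\rm as}$. On the $m$-th coordinate window the function $\Phi_n(a)$ is built entirely from readings of $\mu_m(\alpha_{m,n}(a))$ and of $G_m(\alpha_{m,n}(a))$, all of which lie in the single class $\psi_m(\alpha_{m,n}(a))=\psi_n(a)$; choosing the finite sets $\mathcal F_n\subseteq A_n$ of Lemma~\ref{lem:finite-sets} and enlarging $m_n$ at its stage so that the resulting discrepancies are summable along the $\mathcal F_n$, the argument of Lemma~\ref{lem:close-diagrams} shows $\kappa_{m_n}\Phi_n=\psi_n$ on the dense set $\bigcup_n\alpha_{\infty,n}(\mathcal F_n)$, hence everywhere, so the morphism induces $\phi_{\rm as}$. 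The subtle point throughout is the one flagged above: every compatibility must be realized by a genuine homotopy of $^*$-homomorphisms, which is precisely where the cone structure of the $C_0(\mathbb R_{\ge k},B)$ and the freedom to pass to a larger half-line parameter are indispensable.
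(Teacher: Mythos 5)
Your outline---realizing $B_{\rm as}$ as the inductive limit of the algebras $C_b(\mathbb R_{\geq k},B)$ along the surjective restriction maps and lifting $\phi_{\rm as}\alpha_{\infty,n+1}$ stage by stage using semiprojectivity of $\alpha_n$---is the right starting point and is in the spirit of Dadarlat's argument (note the paper itself offers no proof here, only the citation, though a refined version of this bookkeeping appears in Lemma~\ref{lem:somehow}). The problem is that your declared ``essential input'' is never established: the claim that two $^*$-homomorphic lifts of the same map into $B_{\rm as}$ become, after enlarging $k$, homotopic through lifts is deferred to \cite[Proposition~3.14]{Dadarlat94}, i.e.\ to the very statement you are proving, so the argument is circular at its crucial step. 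Contractibility of $C_0(\mathbb R_{\geq k},B)$ does not by itself produce such homotopies; what does is semiprojectivity of $\alpha_n$ in the quantitative form of Theorem~\ref{thm:liftable-closed}/Corollary~\ref{cor:wishlist} (lifts that are $\delta$-close on a finite set are joined, after precomposing with $\alpha_n$, by a homotopy of lifts moving a prescribed finite set by at most $\epsilon$), and the $\epsilon$-control is not a convenience but exactly what the rest of the proof needs. (A smaller slip: semiprojectivity of $\alpha_n$ applied to $\psi_{n+1}$ yields a lift defined on $A_n$, not on $A_{n+1}$.)

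The final verification also fails as written. That $\mu_m(\alpha_{m,n}(a))$ and every stage of $G_m$ lie in the class $\psi_n(a)$ is a condition at infinity and says nothing about their values at the particular times your windows read off; patching representatives of one class over successive windows need not yield a representative of that class (bumps marching to infinity give a counterexample). Concretely, your $h_n$ freezes the half-line variable at $m_{n+1}$ throughout the $G_n$-portion of the window and compresses the path $t\mapsto\mathrm{ev}^B_t\mu_n$ on the rest, so on a macroscopic subinterval of every window $\Phi_n(a)(t)$ can differ by a fixed amount from $\phi_t(\alpha_{\infty,n}(a))$, and then $\kappa_{m_n}\Phi_n\neq\psi_n$; invoking Lemma~\ref{lem:close-diagrams} does not help, since it compares two diagrammatic representations and you have no second one already known to induce $\phi$. ``Enlarging $m_n$ so the discrepancies are summable'' has no mechanism behind it unless you (i) choose the splice points far enough out that consecutive lifts are $\epsilon_n$-close on the relevant finite sets (as in \eqref{eq:somehow8}), (ii) take the correction homotopies with $\epsilon_n$-small diameter via Corollary~\ref{cor:wishlist}, and (iii) confine them to a terminal piece of the parameter so small that, by uniform continuity on the compact window, the reparametrization moves values by at most $\epsilon_n$ (the $\gamma_n$ and $f_n$ of Lemma~\ref{lem:somehow}). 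With those ingredients your plan becomes essentially the argument of Lemma~\ref{lem:somehow}; without them the construction need not induce $\phi_{\rm as}$.
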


The following result gives the existence of diagrammatic representations of asymptotic morphisms with respect to any shape system for the domain.
The result will not be used explicitly, but the reader may find the proof instructive.
A much more refined version of this proof will appear in Lemma~\ref{lem:somehow}.

\begin{corollary}
  Suppose $A$ and $B$ are $C^*$-algebras with $A$ separable and $\phi \colon A \xrightarrow\approx B$ is an asymptotic morphism.
  If $(\underline A, \underline \alpha)$ is a shape system for $A$, then there is a diagrammatic representation $(\underline \phi, \underline h, \underline t) \colon (\underline A, \underline \alpha) \rightarrow B$ of $\phi$.
\end{corollary}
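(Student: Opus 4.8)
The plan is to invoke Dadarlat's lifting result, Proposition~\ref{prop:shape-lift}, and then reparametrize the resulting morphism of inductive systems into the shape of a diagrammatic representation; there is essentially nothing to prove beyond that bookkeeping.

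First I would apply Proposition~\ref{prop:shape-lift} to the asymptotic morphism $\phi$ and the shape system $(\underline A, \underline \alpha)$, obtaining a strictly increasing sequence $(m_n)_{n=1}^\infty$ of positive integers together with $^*$-homomorphisms $\Phi_n \colon A_n \to C_b(\mathbb R_{\geq m_n}, B)$ satisfying $\rho^B_n \Phi_n = \Phi_{n+1}\alpha_n$ for all $n$ and inducing $\phi_{\rm as}$. I would then take the evaluation sequence to be $t_n = m_n$ (this is unbounded, strictly increasing, and strictly positive, as Definition~\ref{def:diag-rep} demands), set $\phi_n = \ev^B_{t_n}\Phi_n \colon A_n \to B$, and define $h_n \colon A_n \to IB$ by
\[
  h_n(a)(s) = \Phi_n(a)\bigl((1-s)t_n + s\,t_{n+1}\bigr), \qquad s \in [0,1].
\]
Since $(1-s)t_n + s\,t_{n+1} \in \mathbb R_{\geq t_n}$ for every $s \in [0,1]$, each map $a \mapsto h_n(a)(s)$ is the composition of $\Phi_n$ with an evaluation $^*$-homomorphism, and $s \mapsto h_n(a)(s)$ is continuous because $\Phi_n(a) \in C_b(\mathbb R_{\geq t_n}, B)$; hence $h_n$ is a genuine $^*$-homomorphism into $IB = C([0,1], B)$. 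The endpoint conditions \eqref{eq:diag-rep1} are then immediate: $\ev^B_0 h_n = \ev^B_{t_n}\Phi_n = \phi_n$, while $\ev^B_1 h_n = \ev^B_{t_{n+1}}\Phi_n = \ev^B_{t_{n+1}}(\rho^B_n\Phi_n) = \ev^B_{t_{n+1}}\Phi_{n+1}\alpha_n = \phi_{n+1}\alpha_n$, where the second equality uses that $\rho^B_n$ is restriction and $t_{n+1} = m_{n+1}$ lies in the domain of $\rho^B_n\Phi_n(a)$. So $(\underline\phi, \underline h, \underline t)$ is a diagrammatic representation in the sense of Definition~\ref{def:diag-rep}.

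It remains to check that $(\underline\phi, \underline h, \underline t)$ represents $\phi$ and not some other asymptotic morphism. I would form the maps of \eqref{eq:diag-rep2} attached to this triple and compute them directly: for $t_m \leq t < t_{m+1}$ and $m \geq n$, putting $s = (t - t_m)/(t_{m+1}-t_m)$ so that $(1-s)t_m + s\,t_{m+1} = t$, one gets $h_m(\alpha_{m,n}(a))(s) = \Phi_m(\alpha_{m,n}(a))(t)$; and since $\Phi_m\alpha_{m,n}$ is obtained from $\Phi_n$ by composing with restriction maps, it is just $\Phi_n$ restricted to $\mathbb R_{\geq m_m}$, so this equals $\Phi_n(a)(t)$. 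Thus the maps of \eqref{eq:diag-rep2} coincide with the $\Phi_n$ of \eqref{eq:shape-lift}, the commuting diagram \eqref{eq:diag-rep3} for $(\underline\phi, \underline h, \underline t)$ is exactly \eqref{eq:shape-lift}, and the induced $^*$-homomorphism $A \to B_{\rm as}$ is $\phi_{\rm as}$; hence the represented asymptotic morphism is $\phi$.

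I do not expect a genuine obstacle: all of the substance lives in Proposition~\ref{prop:shape-lift}, and what remains is the routine task of converting the restriction maps of a morphism of inductive systems into honest homotopies by a linear change of parameter on each interval $[t_m, t_{m+1}]$. The only point that needs care — and the reason the statement is worth recording separately — is that the evaluation sequence must be taken to be precisely Dadarlat's integers $(m_n)$; since (as noted before Definition~\ref{def:diag-rep}) the equivalence class of the represented asymptotic morphism genuinely depends on the evaluation sequence, one is not free to choose it arbitrarily.
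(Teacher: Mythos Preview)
Your proposal is correct and follows exactly the approach of the paper's proof: apply Proposition~\ref{prop:shape-lift}, set $t_n = m_n$, define $\phi_n(a) = \Phi_n(a)(m_n)$ and $h_n(a)(s) = \Phi_n(a)((1-s)m_n + s m_{n+1})$, and then observe that the diagram \eqref{eq:diag-rep3} built from this data is precisely \eqref{eq:shape-lift}, so the represented asymptotic morphism is $\phi$. Your write-up simply spells out in more detail the endpoint verifications and the identification of the maps in \eqref{eq:diag-rep2} with the $\Phi_n$, which the paper leaves implicit.
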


\begin{proof}
  Apply Proposition~\ref{prop:shape-lift} to obtain a commuting diagram as in \eqref{eq:shape-lift}.
  Let $t_n = m_n$ for $n \geq 1$.
  Further, for $n \geq 1$, define $\phi_n \colon A_n \rightarrow B$ by $\phi_n(a) = \Phi_n(a)(m_n)$ and $h_n \colon A_n \rightarrow IB$ by
  \begin{equation}
    h_n(a)(s) = \Phi_n(a)\big((1-s)m_n + sm_{n+1}\big), \quad a \in A_n,\ s \in I.
  \end{equation}
  Then $(\underline \phi, \underline h, \underline t)$ is a diagrammatic representation of an asymptotic morphism.
  Further, the diagram in \eqref{eq:diag-rep3} induced by $(\underline \phi, \underline h, \underline t)$ is precisely the diagram in \eqref{eq:shape-lift}.
  Therefore, $(\underline \phi, \underline h, \underline t)$ induces $\phi$.
\end{proof}

The following stability result for liftable $^*$-homomorphisms is essentially due to Blackadar.
The case when $A_0 = A$ and $\alpha = \mathrm{id}_A$ is essentially \cite[Theorem~4.1]{Blackadar16}.
The more general version here is taken from \cite[Theorem~1.7]{top-e}.

\begin{theorem}[Blackadar]\label{thm:liftable-closed}
  Let $\alpha \colon A_0 \rightarrow A$ be a semiprojective $^*$-homomorphism between separable $C^*$-algebras $A_0$ and $A$.
  For every finite set $\mathcal F\subseteq A_0$ and $\epsilon > 0$,
  there are a finite set $\mathcal G \subseteq A$ and $\delta > 0$ such that if $B$ and $E$ are $C^*$-algebras, $\pi \colon E \rightarrow B$ is a surjective $^*$-homomorphism, $\phi, \psi \colon A \rightarrow B$ are $^*$-homomorphisms with $\|\phi(a) - \psi(a)\| < \delta$ for all $a \in \mathcal G$, and $\tilde\phi \colon A \rightarrow E$ is a $^*$-homomorphism with $\pi \tilde\phi = \phi$, then there is a $^*$-homomorphism $\tilde \psi \colon A_0 \rightarrow E$ such that $\pi \tilde\psi = \psi\alpha$ and $\|\tilde\phi(\alpha(a)) - \tilde\psi(a)\| < \epsilon$ for all $a \in \mathcal F$.
\end{theorem}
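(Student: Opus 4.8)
The plan is to argue by contradiction, reducing the existence of an \emph{exact} lift of $\psi\alpha$ to a factorization problem for which the semiprojectivity of $\alpha$ is tailor-made. Fix $\mathcal F$ and $\epsilon$ and suppose the conclusion fails for every $\mathcal G$ and $\delta$. I would choose an increasing sequence $\mathcal G_1 \subseteq \mathcal G_2 \subseteq \cdots$ of finite subsets of $A$ with dense union and, applying the negation with $\mathcal G = \mathcal G_n$ and $\delta = 1/n$, obtain surjective $^*$-homomorphisms $\pi_n \colon E_n \to B_n$, pairs $\phi_n, \psi_n \colon A \to B_n$ that are $(1/n)$-close on $\mathcal G_n$, and lifts $\tilde\phi_n \colon A \to E_n$ of $\phi_n$, such that for each $n$ there is no $^*$-homomorphism $\tilde\psi \colon A_0 \to E_n$ with $\pi_n\tilde\psi = \psi_n\alpha$ that is $\epsilon$-close to $\tilde\phi_n\alpha$ on $\mathcal F$. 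The aim is to contradict this for some large $n$.

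Next I would pass to the sequence algebras $\mathcal Q_B = (\prod_n B_n)/(\bigoplus_n B_n)$ and $\mathcal Q_E = (\prod_n E_n)/(\bigoplus_n E_n)$, with induced surjection $\bar\Pi \colon \mathcal Q_E \to \mathcal Q_B$. Since the $\mathcal G_n$ exhaust a dense set and $1/n \to 0$, the families $(\phi_n)$ and $(\psi_n)$ induce the \emph{same} $^*$-homomorphism $\bar\Phi \colon A \to \mathcal Q_B$, while $(\tilde\phi_n)$ induces a $^*$-homomorphism $\overline{\tilde\Phi} \colon A \to \mathcal Q_E$ with $\bar\Pi\,\overline{\tilde\Phi} = \bar\Phi$. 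The key construction is the pullback $R = \mathcal Q_E \times_{\mathcal Q_B} A$ of $\bar\Pi$ against $\bar\Phi$, together with its finite-stage approximations $R_N = (\prod_{n \geq N} E_n) \times_{\prod_{n\geq N} B_n} A$, where $A$ maps into $\prod_{n\geq N}B_n$ by $a \mapsto (\psi_n(a))_{n \geq N}$. The coordinate-forgetting maps $R_N \to R_{N+1}$ are surjective because each $\pi_n$ is, and $R = \varinjlim_N R_N$ compatibly with the projections onto $A$; I expect this identification to require a short norm estimate. The payoff of passing to $R$ is the following observation: a $^*$-homomorphism $A_0 \to R_N$ whose composition with the projection to $A$ is $\alpha$ is exactly a family $(\tilde\psi_n \colon A_0 \to E_n)_{n \geq N}$ of \emph{exact} lifts of the $\psi_n\alpha$.

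Now note that the pair $(\overline{\tilde\Phi}, \id_A)$ defines a $^*$-homomorphism $\omega \colon A \to R$ that is a section of the projection $R \to A$. Applying the semiprojectivity of $\alpha$ to $\omega \colon A \to \varinjlim_N R_N$ yields an $N$ and a $^*$-homomorphism $\tilde\omega \colon A_0 \to R_N$ whose image in $R$ is $\omega\alpha$. Chasing the two projections of $R_N$: the $A$-component of $\tilde\omega$ is $\alpha$ (since this already holds after mapping to $R$), so $\tilde\omega = ((\tilde\psi_n)_{n\geq N}, \alpha)$ with each $\tilde\psi_n \colon A_0 \to E_n$ a genuine $^*$-homomorphism satisfying $\pi_n\tilde\psi_n = \psi_n\alpha$ exactly; and the $\mathcal Q_E$-component of $\tilde\omega$ equals $\overline{\tilde\Phi}\,\alpha$, which forces $\|\tilde\psi_n(a) - \tilde\phi_n(\alpha(a))\| \to 0$ as $n \to \infty$ for every $a \in A_0$. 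Since $\mathcal F$ is finite, I can then pick $m \geq N$ with $\|\tilde\psi_m(a) - \tilde\phi_m(\alpha(a))\| < \epsilon$ for all $a \in \mathcal F$; this $\tilde\psi_m$ contradicts the choice of $E_m$, completing the argument.

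The step I expect to be the main obstacle—and the reason the argument is more than a routine sequence-algebra compactness argument—is obtaining an \emph{exact} lift $\pi\tilde\psi = \psi\alpha$ rather than merely an approximate one. A direct application of the semiprojectivity of $\alpha$ to $\overline{\tilde\Phi} \colon A \to \mathcal Q_E = \varinjlim_N \prod_{n\geq N}E_n$ only produces $^*$-homomorphisms $\tilde\psi_n$ with $\pi_n\tilde\psi_n$ \emph{close} to $\psi_n\alpha$, which does not suffice. The pullback $R = \mathcal Q_E \times_{\mathcal Q_B} A$ is precisely the device that turns ``lift of $\psi_n\alpha$ along $\pi_n$'' into ``section of $R_N \to A$ over $\alpha$'', which is exactly the type of object the (relative) semiprojectivity of $\alpha$ is designed to build; identifying the correct pullback and verifying its inductive-limit structure, along with the compatibility of all the projection maps, is where the care is needed.
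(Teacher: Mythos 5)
Your argument is correct. One caveat on the comparison: the paper itself does not prove Theorem~\ref{thm:liftable-closed} --- it is quoted from \cite[Theorem~1.7]{top-e}, the absolute case $\alpha = \id_A$ being \cite[Theorem~4.1]{Blackadar16} --- so there is no in-paper proof to match line by line. Still, your route lies squarely in the same circle of ideas the paper deploys around this statement: the contradiction scheme through $\prod_n B_n / \bigoplus_n B_n$, realized as the inductive limit of the tail products $\prod_{n \geq N}$ with surjective coordinate-forgetting connecting maps, is exactly the device in the proof of Lemma~\ref{lem:perturb}, and your pullback $R_N = (\prod_{n \geq N} E_n) \times_{\prod_{n \geq N} B_n} A$, which converts ``exact lift of $\psi_n\alpha$ along $\pi_n$'' into ``section of $R_N \to A$ over $\alpha$'', is the same mechanism as the pullbacks $P_n = E_n \oplus_{B_n} B_m$ in Proposition~\ref{prop:lift-prescribed-quotient}, whose proof the paper says is extracted from the proof of this very theorem in \cite{top-e}. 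The step you flagged does close: writing $\mathcal Q_E = \varinjlim_N \prod_{n \geq N} E_n$, each map $R_N \to R = \mathcal Q_E \times_{\mathcal Q_B} A$ is isometric, since the norm of the image of $\big((e_n)_{n\geq N}, a\big)$ both in the limit of the $R_M$ and in $R$ equals $\max\big(\limsup_n \|e_n\|, \|a\|\big)$, and the induced map $\varinjlim_N R_N \to R$ is onto because a bounded lift $(e_n)$ of $x \in \mathcal Q_E$ with $\bar\Pi(x) = \bar\Phi(a)$ can be corrected by $d_n \in E_n$ with $\pi_n(d_n) = \pi_n(e_n) - \psi_n(a)$ and $\|d_n\| = \|\pi_n(e_n) - \psi_n(a)\| \to 0$ (elements lift through surjective $^*$-homomorphisms with the same norm); being isometric and surjective, the map is an isomorphism, so $\omega = (\overline{\tilde\Phi}, \id_A)$ is legitimately a $^*$-homomorphism into the limit of a system with surjective connecting maps. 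Applying semiprojectivity of $\alpha$ and reading off the two coordinates then gives exact lifts $\pi_n \tilde\psi_n = \psi_n \alpha$ with $\|\tilde\psi_n(a) - \tilde\phi_n(\alpha(a))\| \to 0$ for each $a$, and finiteness of $\mathcal F$ produces the contradiction, exactly as you say.
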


The above theorem can be visualized as a diagram
\begin{equation}
  \begin{tikzcd}
    &[20pt] &[50pt] E\hphantom{,} \arrow{d}{\pi} \\[10pt]
    A_0 \arrow{r}[swap]{\alpha}[name=LD, near end]{} \arrow[bend left, dashed]{urr}{\tilde\psi}[name=LU, below]{} \arrow[to path=(LU) -- (LD)\tikztonodes, phantom]{r}[pos = .6, description]{\underset{\scriptscriptstyle \mathcal F, \epsilon}{\approx}} & A \arrow[bend left=20]{ur}[swap, pos = .7]{\tilde\phi} \arrow[bend left = 20]{r}{\psi}[name = RU,below]{} \arrow[bend right=20]{r}[swap]{\phi}[name = RD]{} 
    \arrow[to path=(RU) -- (RD)\tikztonodes, phantom]{r}[pos = .55, description]{\underset{\scriptscriptstyle \mathcal G, \delta}{\approx}} & B,
  \end{tikzcd}	
\end{equation}
taking care to note that the triangle formed by $\tilde\phi$, $\pi$, and $\psi$ does not commute.

Theorem~\ref{thm:liftable-closed} has a striking consequence: two $^*$-homomorphisms from a semiprojective $C^*$-algebra to a common codomain are homotopic whenever they are close in the point-norm topology, with the bound depending only on the domain.
Moreover, one can arrange the homotopy to be approximately constant. This is the content of \cite[Corollary~4.2]{Blackadar16}.

The following is slight strengthening of this result that provides a relative version for semiprojective $^*$-homomorphisms and allows one to control the behavior of the homotopy in a quotient.
A weaker version of this result without control on the length of the homotopy or the behavior of the homotopy in quotient appeared as \cite[Corollary~1.8]{top-e}.
	
\begin{corollary}
  \label{cor:wishlist}
  Let $\alpha\colon A_0\to A$ be a semiprojective $^*$-homomorphism between separable $C^*$-algebras $A_0$ and $A$.
  For every finite set $\mathcal{F} \subset A_0$ and $\epsilon > 0$, there are a finite subset $\mathcal{G} \subset A$ and $\delta > 0$ such that for all $C^*$-algebras $B$ and $E$ and  $^*$-ho\-mo\-mor\-phisms $\pi\colon E\to B$ and $\phi, \psi\colon A\to E$ such that $\pi$ is surjective, $\pi \phi = \pi \psi$, and $\| \phi(a) - \psi(a) \| < \delta$ for all $a \in \mathcal G$, there is a $^*$-homomorphism $h\colon A_0\to IE$ satisfying the following properties:
  \begin{enumerate}
  \item $\mathrm{ev}_0^E h = \phi\alpha$ and $\mathrm{ev}_1^E h = \psi\alpha$;
  \item $\pi( h(a)(t)) = \pi( \phi(\alpha(a)))$ for all $a\in A_0$ and $t\in I$;
  \item  $\| h(a)(t) - \phi(\alpha(a)) \| <
    \epsilon$ for all $a \in \mathcal F$ and $t\in I$.
  \end{enumerate}
\end{corollary}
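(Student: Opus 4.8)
The plan is to deduce Corollary~\ref{cor:wishlist} directly from Theorem~\ref{thm:liftable-closed}, applied to a single cleverly chosen surjection whose codomain simultaneously encodes the three required conditions. Given $\phi,\psi$ as in the hypotheses, write $\chi = \pi\phi = \pi\psi \colon A \to B$, let $c_y$ denote the constant path at an element $y$ (in $IB$ if $y \in B$, in $IE$ if $y \in E$), and form the $C^*$-algebra
\[
  B' = \{\, (g, e_0, e_1) \in IB \oplus E \oplus E : g(0) = \pi(e_0),\ g(1) = \pi(e_1) \,\},
\]
together with the $^*$-homomorphism $q \colon IE \to B'$, $q(f) = (I\pi(f),\, f(0),\, f(1))$. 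The key observation is that a $^*$-homomorphism $h \colon A_0 \to IE$ satisfies conditions (i) and (ii) of the corollary exactly when $q h = g'\alpha$, where $g' \colon A \to B'$ is the $^*$-homomorphism $g'(a) = (c_{\chi(a)},\, \phi(a),\, \psi(a))$ (well defined since $\pi\phi = \pi\psi = \chi$); condition (iii) will then fall out of the ``approximately constant'' part of Theorem~\ref{thm:liftable-closed}.

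First I would verify that $q$ is surjective. Using a continuous (Bartle--Graves) section $s \colon B \to E$ of $\pi$, given $(g, e_0, e_1) \in B'$ set $f_0 = s \circ g - c_{s(g(0)) - e_0}$, a continuous lift of $g$ with $f_0(0) = e_0$ (legitimate because $s(g(0)) - e_0 \in \ker\pi$), and then put $f(t) = f_0(t) - t\,(f_0(1) - e_1)$; since $f_0(1) - e_1 \in \ker\pi$, this leaves $I\pi(f) = g$ and $f(0) = e_0$ intact while forcing $f(1) = e_1$, so $q(f) = (g, e_0, e_1)$. Next I would apply Theorem~\ref{thm:liftable-closed} to the semiprojective map $\alpha$ with the given $\mathcal F \subseteq A_0$ and $\epsilon > 0$; this produces a finite set $\mathcal G \subseteq A$ and $\delta > 0$, which are the data the corollary asserts. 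Then, alongside $g'$, consider the $^*$-homomorphism $g'' \colon A \to B'$, $g''(a) = (c_{\chi(a)},\, \phi(a),\, \phi(a))$, which admits the genuine lift $\widetilde{g''} \colon A \to IE$, $\widetilde{g''}(a) = c_{\phi(a)}$, so $q\widetilde{g''} = g''$; note also $\|g'(a) - g''(a)\|_{B'} = \|\psi(a) - \phi(a)\| < \delta$ for all $a \in \mathcal G$.

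Finally, I would invoke Theorem~\ref{thm:liftable-closed} with $(B', IE, q, g'', \widetilde{g''}, g')$ playing the roles of $(B, E, \pi, \phi, \tilde\phi, \psi)$: it yields a $^*$-homomorphism $h \colon A_0 \to IE$ with $q h = g'\alpha$ and $\|\widetilde{g''}(\alpha(a)) - h(a)\| < \epsilon$ for $a \in \mathcal F$. Reading off the three coordinates of $q h = g'\alpha$ gives $\mathrm{ev}^E_0 h = \phi\alpha$, $\mathrm{ev}^E_1 h = \psi\alpha$, and $\pi(h(a)(t)) = \chi(\alpha(a)) = \pi(\phi(\alpha(a)))$ for all $a \in A_0$ and $t \in I$ — that is, (i) and (ii) — while the estimate is precisely (iii), since $\widetilde{g''}(\alpha(a))$ is the constant path at $\phi(\alpha(a))$. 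I expect the only genuinely non-routine step to be the surjectivity of $q$, i.e.\ lifting a continuous path in $B$ to a continuous path in $E$ with both endpoints prescribed compatibly; the rest is bookkeeping — checking that $g'$ and $g''$ are $^*$-homomorphisms into the pullback $B'$, that closeness on $\mathcal G$ transfers to closeness in $B'$, and that the coordinates of $q h$ unpack to the three listed conditions.
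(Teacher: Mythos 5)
Your proof is correct and is essentially the paper's argument: a single application of Theorem~\ref{thm:liftable-closed} to an auxiliary surjection built from $\pi$ and the path algebra, with the constant-path lift of the diagonal map $\phi\oplus\phi$ serving as the given lift and the map of the form $\phi\oplus\psi$ as the target. The only difference is bookkeeping: the paper uses $\hat\pi\colon\hat E\to\hat B$ with $\hat E=\{f\in IE:\pi\circ f \text{ constant}\}$ and $\hat B=\{(e_0,e_1)\in E\oplus E:\pi(e_0)=\pi(e_1)\}$, so surjectivity is immediate (straight-line interpolation) and condition (ii) is built into the domain, whereas your larger codomain $B'$ records the whole path in $IB$ and forces it to be constant via the target map, which is why you need the (correct) Bartle--Graves/endpoint-correction argument for surjectivity of $q$.
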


\begin{proof}
  Let $\mathcal{F} \subset A_0$ and $\epsilon>0$ be given.
  Let $\mathcal{G}$ and $\delta$ be as provided by Theorem~\ref{thm:liftable-closed}.
  Given a surjective $^*$-homomorphism $\pi\colon E\to B$ and $^*$-homomorphisms $\phi, \psi\colon A\to E$ with $\pi \phi = \pi \psi$ and $\|\phi(a) - \psi(a)\| < \delta$ for all $a \in \mathcal G$, define
  \begin{align}
    \begin{split}
      \hat{E} &= \big\{ f\in IE : \pi\big(f(t)\big) =
      \pi\big(f(0) \big) \text{ for all } t\in I \big\},\\
      \hat{B} &= \big\{ (e_0, e_1) \in E\oplus E : \pi(e_0) = \pi(e_1)
      \big\},
    \end{split}
  \end{align}
  and a surjective $^*$-homomorphism
  \begin{equation}
    \hat{\pi}\colon \hat{E}\to \hat{B} \colon f\mapsto \big( f(0), f(1)
    \big).
  \end{equation}
  Then $\phi\oplus \phi$ and $\phi\oplus \psi$ are $^*$-homomorphisms $A \rightarrow \hat B$ with 
  \begin{equation}
  	\|(\phi\oplus \phi)(a) - (\phi\oplus \psi)(a)\| < \delta, \quad a\in \mathcal{G},
  \end{equation} 
  and $\phi\oplus \phi$ lifts along $\hat{\pi}$ to the $^*$-homomorphism $A\to \hat{E}$ defined by sending $a\in A$ to the constant function with value $\phi(a)$.
  Let $h\colon A_0\to \hat{E} \subseteq IE$ be the lift of $\phi\oplus \psi$ given by Theorem~\ref{thm:liftable-closed}.
\end{proof}

The next result gives a strengthening of the definition semiprojectivity by demanding that the partial lift $\tilde\phi$ in the definition extend a previously defined partial lift.
The proof below is extracted from the proof of \cite[Theorem~1.7]{top-e} (restated as Theorem~\ref{thm:liftable-closed} above) which, in turn, is modeled on \cite[Theorem~3.1]{Blackadar16}

\begin{proposition}\label{prop:lift-prescribed-quotient}
  Let $A_0$ and $A$ be separable $C^*$-algebras and let $\alpha \colon A_0 \rightarrow A$ be a semiprojective $^*$-homomorphism.
  Suppose that
  \begin{equation}
    \begin{tikzcd}
      0 \arrow{r} & J_n \arrow{r}{\iota_n} \arrow{d}{\rho_n^J} & E_n \arrow{r}{\pi_n} \arrow{d}{\rho_n^E} & B_n \arrow{r} \arrow{d}{\rho_n^B} & 0 \\
      0 \arrow{r} & J_{n+1} \arrow{r}{\iota_{n+1}} & E_{n+1} \arrow{r}{\pi_{n+1}} & B_{n+1} \arrow{r} & 0
    \end{tikzcd}
  \end{equation}
  is an inductive system of extensions of $C^*$-algebras with limit
  \begin{equation}
    \begin{tikzcd}
      0 \arrow{r} & J \arrow{r}{\iota} & E \arrow{r}{\pi} & B \arrow{r} & 0,
    \end{tikzcd}
  \end{equation}
  and assume each $\rho_n^J$ is surjective.
  Given $m \geq 1$ and $^*$-homomorphisms $\phi_m \colon A \rightarrow B_m$ and $\tilde\phi \colon A \rightarrow E$ such that $\pi \tilde\phi = \rho_{\infty, m}^B \phi_m$, there are an integer $n \geq m$ and a $^*$-homomorphism $\tilde\phi_n \colon A_0 \rightarrow E_n$ such that $\rho_{\infty, n}^E \tilde\phi_n = \tilde\phi \alpha$ and $\pi_n \tilde\phi_n = \rho_{n,m}^B \phi_m \alpha$.
\end{proposition}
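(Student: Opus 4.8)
The plan is to reduce the statement to the definition of semiprojectivity of $\alpha$, applied to an auxiliary inductive system built as a fibred product. For $n \geq m$, let $D_n$ be the pullback of $\pi_n \colon E_n \to B_n$ along $\rho_{n, m}^B \phi_m \colon A \to B_n$, that is,
\[
  D_n = \big\{ (e, a) \in E_n \oplus A : \pi_n(e) = \rho_{n, m}^B(\phi_m(a)) \big\},
\]
and let $\rho_n^D \colon D_n \to D_{n+1}$ be the $^*$-homomorphism $(e, a) \mapsto (\rho_n^E(e), a)$, which lands in $D_{n+1}$ because $\pi_{n+1} \rho_n^E = \rho_n^B \pi_n$. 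Let $D$ denote the corresponding pullback of $\pi$ along $\rho_{\infty, m}^B \phi_m$. The hypothesis $\pi \tilde\phi = \rho_{\infty, m}^B \phi_m$ is precisely the statement that $\Psi \colon A \to D$, $a \mapsto (\tilde\phi(a), a)$, is a well-defined $^*$-homomorphism.

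First I would verify that each $\rho_n^D$ is surjective. Given $(e', a) \in D_{n+1}$, put $b = \rho_{n, m}^B(\phi_m(a))$ and use surjectivity of $\pi_n$ to choose $e_0 \in E_n$ with $\pi_n(e_0) = b$; then $e' - \rho_n^E(e_0) \in \ker \pi_{n+1} = \iota_{n+1}(J_{n+1})$, so surjectivity of $\rho_n^J$ lets us add to $e_0$ a suitable element of $\iota_n(J_n)$ to reach $e'$ without changing the image under $\pi_n$.

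The key step, and the main obstacle, is to identify the inductive limit of $(D_n, \rho_n^D)$ with $D$. Since inductive limits of $C^*$-algebras are exact, the extensions $0 \to \iota_n(J_n) \to E_n \to B_n \to 0$ have limit $0 \to J \to E \to B \to 0$; as the inclusions $D_n \hookrightarrow E_n \oplus A$ are compatible with the connecting maps and the limit of injective $^*$-homomorphisms is injective, the inductive limit of $(D_n)$ is the closure $D_\infty$ of $\bigcup_n \rho_{\infty, n}^D(D_n)$ inside $E \oplus A$, and $D_\infty \subseteq D$. To see $D_\infty = D$, observe that $D$ fits in an extension $0 \to J \oplus 0 \to D \to A \to 0$ whose quotient map is the second-coordinate projection (surjective because $\pi$ is), and likewise each $D_n$ sits in $0 \to J_n \oplus 0 \to D_n \to A \to 0$. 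Since the inductive limit of $(J_n)$ is $J$ and each $D_n \to A$ is surjective, $D_\infty$ contains $J \oplus 0$ and surjects onto $A$, so $D_\infty = D$.

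Finally, after reindexing the system $(D_n, \rho_n^D)_{n \geq m}$ to start at $1$, semiprojectivity of $\alpha$ applied to $\Psi \colon A \to D$ yields an integer $n \geq m$ and a $^*$-homomorphism $\tilde\Psi \colon A_0 \to D_n$ with $\rho_{\infty, n}^D \tilde\Psi = \Psi \alpha$. Writing $\tilde\Psi(a) = (\tilde\phi_n(a), \gamma(a))$ with $\tilde\phi_n \colon A_0 \to E_n$ and $\gamma \colon A_0 \to A$, the identity $\rho_{\infty, n}^D \tilde\Psi = \Psi \alpha$ forces $\gamma = \alpha$ and $\rho_{\infty, n}^E \tilde\phi_n = \tilde\phi \alpha$, while the fact that $\tilde\Psi$ takes values in $D_n$ gives $\pi_n \tilde\phi_n = \rho_{n, m}^B \phi_m \gamma = \rho_{n, m}^B \phi_m \alpha$. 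This is exactly the required conclusion, so the only genuine work is the limit identification in the previous paragraph; the rest is bookkeeping.
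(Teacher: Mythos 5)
Your proof is correct and takes essentially the same route as the paper: there one forms the pullbacks $P_n = E_n \oplus_{B_n} B_m$ along $\rho^B_{n,m}$ (with the same surjectivity argument for the connecting maps via surjectivity of $\rho_n^J$), identifies the limit of the pullback system with the pullback of the limit extension, and applies semiprojectivity of $\alpha$ to the induced map $A \rightarrow P$ coming from $(\tilde\phi, \phi_m)$, finally projecting to the first coordinate. Your only deviation is pulling back along $\rho^B_{n,m}\phi_m$, so the second coordinate is $A$ rather than $B_m$, which merely replaces the paper's exact-rows identification of the limit by your ideal-plus-surjection argument and changes nothing essential.
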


\begin{proof}
  For $n \geq m$, consider the pullback $C^*$-algebra
  \begin{equation}
    P_n = E_n \oplus_{B_n} B_m = \{(e_n, b_m) \in E_n \oplus B_m : \pi_n(e_n) = \rho_{n, m}^B(b_m) \},
  \end{equation}
  as illustrated in the top-level of the diagram	
  \begin{equation}\label{eq:pullback-seq}
    \begin{tikzcd}[column sep = 2.3ex]
      & 0 \arrow{rr} & & J_n \arrow{rr}{\tilde\iota_n} \arrow{dd}[near end, swap]{\rho_n^J} \arrow[equals]{dl} & & P_n \arrow{dd}[near end, swap]{\rho_n^P} \arrow{rr}{\mathrm{pr}_n^{(2)}} \arrow{dl}[swap, yshift = -.7ex]{\mathrm{pr}_n^{(1)}} & & B_m \arrow[equals]{dd} \arrow{rr} \arrow{dl}[swap, near end]{\rho_{n, m}^B} & & 0\\
      0 \arrow{rr} & & J_n \arrow[crossing over]{rr}[near end]{\iota_n} & & E_n \arrow[crossing over]{rr}[near end]{\pi_n} & & B_n \arrow[crossing over]{rr} & & 0\hphantom{,} & \\
      & 0 \arrow{rr} & & J_{n+1} \arrow{rr}[near start]{\tilde\iota_{n+1}} \arrow[equals]{dl} & & P_{n+1} \arrow{rr}[near start]{\mathrm{pr}_{n+1}^{(2)}} \arrow{dl}[near start, xshift = -.2ex, yshift = .5ex]{\mathrm{pr}_{n+1}^{(1)}} & & B_m \arrow{rr} \arrow{dl}[near start]{\rho_{n+1, m}^B} & & 0 \\
      0 \arrow{rr} & & J_{n+1} \arrow{rr}[swap]{\iota_{n+1}} \arrow[crossing over, leftarrow]{uu}[near start]{\rho_n^J} & & E_{n+1} \arrow{rr}[swap]{\pi_{n+1}} \arrow[crossing over, leftarrow]{uu}[near start]{\rho_n^E} & & B_{n+1} \arrow{rr} \arrow[crossing over, leftarrow]{uu}[near start]{\rho_n^B} & & 0, & \\
    \end{tikzcd}
  \end{equation}
  where $\tilde{\iota}_n$ maps $x\in J_n$ to $(x, 0)\in P_n$, $\mathrm{pr}_n^{(1)}$ and  $\mathrm{pr}_n^{(2)}$ are the projections onto the first and second coordinates, respectively, and $\rho_n^P\colon P_n\to P_{n+1}$ applies $\rho_n^E$ to the first coordinate.
  It is straightforward that \eqref{eq:pullback-seq} commutes.
  
  % Define the pullback $P = E \oplus_B B_m$ analogously.

  The fact that every $\rho_n^J$ is surjective implies that every $\rho_n^P$ is surjective.
  Let $P$ be the inductive limit of $(\underline P, \underline \rho^P)$.
  Taking the inductive limit over $n$ in \eqref{eq:pullback-seq} produces a commutative diagram
  \begin{equation}\label{eq:pullback-limit}
    \begin{tikzcd}
      0 \arrow{r} & J \arrow{r}{\tilde\iota} \arrow[equals]{d} & P \arrow{r}{\mathrm{pr}_\infty^{(2)}} \arrow{d}{\mathrm{pr}_\infty^{(1)}} & B_m \arrow{r} \arrow{d}{\rho_{\infty, m}^B} & 0 \\
      0 \arrow{r} & J \arrow{r}{\iota} & E \arrow{r}{\pi} & B \arrow{r} & 0
    \end{tikzcd}
  \end{equation}
  with exact rows.
  Therefore, the right square in \eqref{eq:pullback-limit} is a pullback square.
  It follows that $\tilde\phi$ and $\phi_m$ induce a $^*$-homomorphism $\psi \colon A \rightarrow P$ with $\mathrm{pr}_\infty^{(1)} \psi = \tilde\phi$ and $\mathrm{pr}_\infty^{(2)} \psi = \phi_m$.
  Since $\alpha$ is semiprojective, there are $n \geq m$ and a $^*$-homomorphism $\psi_n \colon A_0 \rightarrow P_n$ such that $\rho_{\infty, n}^P \psi_n = \psi \alpha$.
  Define $\tilde\phi_n = \mathrm{pr}_n^{(1)} \psi_n$.
  Then
  \begin{equation}
    \rho_{\infty, n}^E\tilde{\phi}_n = \rho_{\infty, n}^E \mathrm{pr}_n^{(1)} \psi_n = \mathrm{pr}_{\infty}^{(1)} \rho_{\infty, n}^P \psi_n = \mathrm{pr}_{\infty}^{(1)} \psi \alpha = \tilde{\phi} \alpha.
  \end{equation}
  Furthermore,
  \begin{align}
  	\begin{split}
      \pi_n \tilde\phi_n 
      &= \pi_n \mathrm{pr}^{(1)}_n \psi_n \\
      &= \rho_{n, m}^B \mathrm{pr}_n^{(2)} \psi_n \\ 
      &= \rho_{n, m}^B \mathrm{pr}_\infty^{(2)} \rho_{\infty, n}^P \psi_n \\
      &= \rho_{n, m}^B \mathrm{pr}_\infty^{(2)} \psi \alpha \\ 
      &= \rho_{n, m}^B \phi_m \alpha.
    \end{split}
  \end{align}
  so $\tilde\phi_n$ satisfies the required properties.
\end{proof}

The following two lemmas and their proofs are standard and well-known to experts, but we could not find precise references, so we include the proofs here.

\begin{lemma}
  \label{lem:perturb}
  Suppose $A_0$ and $A$ are separable $C^*$-algebras and $\alpha \colon A_0 \rightarrow A$ is a semiprojective $^*$-homomorphism.
  For every finite set $\mathcal G \subseteq A_0$ and $\delta > 0$, there is a finite set $\mathcal H \subseteq A$ and $\gamma > 0$ such that if $B$ is a $C^*$-algebra and $\phi \colon A \rightarrow B$ is a self-adjoint linear map with $\|\phi(a_1a_2) - \phi(a_1)\phi(a_2)\| < \gamma$ for all $a_1,a_2 \in \mathcal H$, then there is a $^*$-homomorphism $\phi' \colon A_0 \rightarrow B$ such that $\|\phi'(a) - \phi(\alpha(a))\| < \delta$ for all $a \in \mathcal G$.
\end{lemma}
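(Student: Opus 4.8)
The plan is to argue by contradiction, producing the desired $^*$-homomorphism $\phi'$ from a single application of the semiprojectivity of $\alpha$ to a suitable inductive limit with surjective connecting maps. Fix a finite set $\mathcal{G}\subseteq A_0$ and $\delta>0$, and use separability of $A$ to choose an increasing sequence of finite sets $\mathcal{H}_1\subseteq\mathcal{H}_2\subseteq\cdots$ in $A$ with $\alpha(\mathcal{G})\subseteq\mathcal{H}_1$ and $\bigcup_n\mathcal{H}_n$ dense in $A$ (cf.\ Lemma~\ref{lem:finite-sets}). If the conclusion of the lemma fails for this pair $(\mathcal{G},\delta)$, then for each $n\geq1$ there are a $C^*$-algebra $E_n$ and a self-adjoint linear map $\phi_n\colon A\to E_n$ with $\|\phi_n(ab)-\phi_n(a)\phi_n(b)\|<1/n$ for all $a,b\in\mathcal{H}_n$, yet for which no $^*$-homomorphism $\psi\colon A_0\to E_n$ satisfies $\|\psi(g)-\phi_n(\alpha(g))\|<\delta$ for all $g\in\mathcal{G}$. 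As explained at the end, we may and do assume in addition that each $\phi_n$ is contractive.

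Next I would organize the $E_n$ into an inductive system $(\underline{C},\underline{\gamma})$ by setting $C_n=\prod_{k\geq n}E_k$ (bounded sequences) and letting $\gamma_n\colon C_n\to C_{n+1}$ forget the $k=n$ coordinate. Each $\gamma_n$ is surjective, and the inductive limit is canonically identified with $\mathcal{E}:=(\prod_{k\geq1}E_k)/N$, where $N=\{(x_k):\|x_k\|\to0\}$, the canonical map $\gamma_{\infty,n}\colon C_n\to\mathcal{E}$ being surjective for each $n$. Since the $\phi_n$ are contractive, $(\phi_n(a))_n$ is a bounded sequence for each $a\in A$, so $\Phi(a):=[(\phi_n(a))_n]$ defines a contractive self-adjoint linear map $\Phi\colon A\to\mathcal{E}$; and because $\|\phi_n(ab)-\phi_n(a)\phi_n(b)\|<1/n\to0$ whenever $a,b$ lie in a common $\mathcal{H}_m$, the map $\Phi$ is multiplicative on the dense set $\bigcup_n\mathcal{H}_n$, and therefore, being contractive, is a $^*$-homomorphism $\Phi\colon A\to\mathcal{E}$.

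Now I would apply the semiprojectivity of $\alpha$ to $\Phi\colon A\to\mathcal{E}$ and the system $(\underline{C},\underline{\gamma})$: there are an integer $N$ and a $^*$-homomorphism $\Psi\colon A_0\to C_N$ with $\gamma_{\infty,N}\Psi=\Phi\alpha$. Writing $\Psi=(\psi_k)_{k\geq N}$, each $\psi_k\colon A_0\to E_k$ is a $^*$-homomorphism, and the identity $\gamma_{\infty,N}\Psi=\Phi\alpha$ unwinds --- using that $\Phi(\alpha(g))$ is represented by the sequence $(\phi_k(\alpha(g)))_k$ since $\alpha(g)\in\bigcup_n\mathcal{H}_n$ --- to the assertion that $\lim_k\|\psi_k(g)-\phi_k(\alpha(g))\|=0$ for every $g\in\mathcal{G}$. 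As $\mathcal{G}$ is finite, some $k\geq N$ then satisfies $\|\psi_k(g)-\phi_k(\alpha(g))\|<\delta$ for all $g\in\mathcal{G}$, with $\psi_k\colon A_0\to E_k$ a $^*$-homomorphism; this contradicts the choice of $\phi_k$ and proves the lemma.

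The step I expect to require the most care is the reduction to contractive maps, used twice above and essential for the map $\Phi$ to be defined: a priori a self-adjoint linear map on a $C^*$-algebra need not even be bounded, so $(\phi_n(a))_n$ need not define an element of $\prod_k E_k$, and more is needed than bare $1/n$-multiplicativity on the finite sets $\mathcal{H}_n$ to control $\|\phi_n\|$. In practice one invokes the spectral argument behind Proposition~\ref{prop:ptwise-bounded}\ref{ptbd1}: enlarging each $\mathcal{H}_n$ suitably (so that it contains, after unitization, approximate units and approximate square roots of the positive and negative parts of its self-adjoint elements), $1/n$-multiplicativity on $\mathcal{H}_n$ forces $\|\phi_n(a)\|\leq\|a\|+o(1)$ on any prescribed finite set, so a mild truncation of $\phi_n$ makes it contractive without disturbing the rest of the argument; one may alternatively read contractivity of $\phi$ as part of the hypothesis, as is customary in the classical statements of this type for completely positive contractive maps. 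The remaining points --- surjectivity of the truncation maps $\gamma_n$ and the identification of the inductive limit with $\mathcal{E}$ --- are routine.
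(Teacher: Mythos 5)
Your overall strategy is exactly the paper's: argue by contradiction, assemble the counterexample maps $\phi_n \colon A \to E_n$ into a single $^*$-homomorphism into $\prod_k E_k$ modulo the sequences vanishing at infinity, realize that quotient as the limit of the inductive system $\prod_{k \geq n} E_k$ with surjective (truncation) connecting maps, apply semiprojectivity of $\alpha$, and read off a componentwise contradiction on the finite set $\mathcal G$. You also correctly identified the only delicate point, namely why $(\phi_n)$ induces a $^*$-homomorphism at all; the paper's proof asserts this without comment.

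However, your repair of that point does not work as stated. A ``mild truncation'' of $\phi_n$ that is globally contractive yet still agrees with $\phi_n$ on $\mathcal H_n$ need not exist: any linear map agreeing with $\phi_n$ on $\mathcal H_n$ is forced to agree with it on $\mathrm{span}(\mathcal H_n)$, and the estimate $\|\phi_n(a)\| \leq \|a\| + o(1)$ for $a \in \mathcal H_n$ (which is all the spectral/approximate-square-root argument yields) gives no control there --- two nearby elements of $\mathcal H_n$ may have far-apart images, so the restriction to the span can fail to be contractive, let alone the map on all of $A$. Nor can you simply build contractivity (or complete positivity) into the hypothesis: in the intended application (Lemma~\ref{lem:approx-lift}) the maps fed into this lemma are only self-adjoint, linear and pointwise-bounded, as produced by Proposition~\ref{prop:ptwise-bounded}. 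What does suffice, and leaves the rest of your argument verbatim, is weaker than global contractivity of the $\phi_n$: fix a countable dense $\mathbb{Q}[i]$-$^*$-subalgebra $\mathcal A_0 \subseteq A$ containing $\alpha(\mathcal G)$, and choose the $\mathcal H_n$ to exhaust $\mathcal A_0$ together with, for each $y \in \mathcal A_0$, the finitely many auxiliary elements (e.g.\ $y^*$, $(y^*y)^{1/2}$ and the element $h \in A$ with $y^*y = \|y\|^2 h(2-h)$) needed for the local norm estimate. Exact linearity and self-adjointness of $\phi_n$ together with $1/n$-multiplicativity at these pairs then give $\limsup_n \|\phi_n(y)\| \leq \|y\|$ for every $y \in \mathcal A_0$, so $\Phi(y) = [(\phi_n(y))_n]$ is well defined, and $\Phi$ is a contractive, $^*$-preserving, multiplicative, $\mathbb{Q}[i]$-linear map on the dense subalgebra $\mathcal A_0$; it therefore extends by continuity to the $^*$-homomorphism $A \to \mathcal E$ you need (and $\Phi(\alpha(g))$ is still represented by $(\phi_n(\alpha(g)))_n$ for $g \in \mathcal G$). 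With this replacement for your contractivity reduction, your proof coincides with the paper's.
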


\begin{proof}
  Suppose the result fails for some finite set $\mathcal G \subseteq A$ and $\delta > 0$.
  Let $(\mathcal H_n)_{n=1}^\infty$ be an increasing sequence of finite subsets of $A$ with dense union and set $\gamma_n = 1/n$.
  For each $n \geq 1$, there are a $C^*$-algebra $B_n$ and a self-adjoint linear map $\phi_n \colon A \rightarrow B_n$ such that $\|\phi_n(a_1a_2) - \phi_n(a_1)\phi_n(a_2)\|< \gamma_n$ for all $a_1, a_2 \in \mathcal H_n$ but such that there is no $^*$-homomorphism $\phi' \colon A \rightarrow B_n$ with $\|\phi'(a) - \phi_n(\alpha(a))\| < \delta$ for all $a \in \mathcal G$.
	
  Let $B = \prod_{n=1}^\infty B_n / \bigoplus_{n=1}^\infty B_n$.
  Then the sequence $(\phi_n)_{n=1}^\infty$ induces a $^*$-ho\-mo\-morph\-ism $\phi \colon A \rightarrow B$.
  Realizing $B$ as an inductive limit of the $C^*$-algebras $\prod_{m=n}^\infty B_m$ for $n\geq1$, with the connecting maps given by the canonical projections, there are an integer $n \geq 1$ and a $^*$-ho\-mo\-morph\-ism $\phi' \colon A_0 \rightarrow \prod_{m=n}^\infty B_m$ such that $\phi'$ lifts $\phi \alpha$, since $\alpha$ is semiprojective.
  If we denote the components of $\phi'$ by $\phi'_m \colon A_0 \rightarrow B_m$ for $m \geq n$, then $\lim_m \|\phi'_m(a) - \phi_m(\alpha(a))\| = 0$ for all $a \in A_0$.
  This contradicts the choice of $\phi_m$ for some sufficiently large $m \geq n$.
\end{proof}

The following consequence of Lemma~\ref{lem:perturb} and Theorem~\ref{thm:liftable-closed} will be used in the proof of our relative homotopy lifting result in Theorem~\ref{thm:hlp-qdl}.

\begin{lemma}
  \label{lem:approx-lift}
  Suppose $A_0$, $A$, $B$, and $E$ are $C^*$-algebras with $A_0$ and $A$ separable.
  Let $\alpha \colon A_0 \rightarrow A$ be a semiprojective $^*$-ho\-mo\-mor\-phism, $\pi \colon E \rightarrow B$ be a surjective $^*$-ho\-mo\-mor\-phism, and $\phi \colon A \rightarrow B$ be a $^*$-homomorphism.
  If there is an approximate morphism $(\sigma_\lambda) \colon A \rightarrow E$ such that $\lim_\lambda \|\pi(\sigma_\lambda(a)) - \phi(a)\| = 0$ for all $a \in A$, then there is a $^*$-homomorphism $\tilde \phi \colon A_0 \rightarrow E$ such that $\pi \tilde \phi = \phi \alpha$.
\end{lemma}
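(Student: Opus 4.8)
The plan is to upgrade the approximate lift to a genuine $^*$-homomorphism into a sequence algebra over $E$ and then use the prescribed-quotient form of semiprojectivity (Proposition~\ref{prop:lift-prescribed-quotient}) to push a lift back down to $E$ itself. First I would use Proposition~\ref{prop:ptwise-bounded} to replace $(\sigma_\lambda)$ by a self-adjoint, pointwise-bounded, equicontinuous approximate morphism, noting that the condition $\lim_\lambda\|\pi(\sigma_\lambda(a)) - \phi(a)\| = 0$ is preserved; since $A$ is separable a routine extraction lets me assume the index set is $\mathbb{N}$. Set $E_\infty = \ell^\infty(\mathbb{N}, E)/c_0(\mathbb{N}, E)$ and $B_\infty = \ell^\infty(\mathbb{N}, B)/c_0(\mathbb{N}, B)$, with $\pi_\infty \colon E_\infty \to B_\infty$ the (surjective) $^*$-homomorphism induced by $\pi$. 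Pointwise-boundedness makes $a \mapsto (\sigma_n(a))_n$ well defined into $\ell^\infty(\mathbb{N}, E)$, and the approximate-morphism relations say precisely that the induced map $\sigma_\infty \colon A \to E_\infty$ is a $^*$-homomorphism; moreover $\pi_\infty \sigma_\infty = \iota^B_\infty \phi$, where $\iota^B_\infty \colon B \to B_\infty$ is the constant embedding, because $\lim_n\|\pi(\sigma_n(a)) - \phi(a)\| = 0$.

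Next I would supply the inductive-limit structure. Truncation identifies $E_\infty$ with $\varinjlim_n \prod_{j > n} E$ along the coordinate-forgetting connecting maps, which are surjective; the same applies to $B_\infty$, and one checks $\ker(\pi_\infty) = \ell^\infty(\mathbb{N}, \ker\pi)/c_0(\mathbb{N}, \ker\pi) = \varinjlim_n \prod_{j>n}\ker\pi$. This gives an inductive system of extensions $0 \to \prod_{j>n}\ker\pi \to \prod_{j>n}E \to \prod_{j>n}B \to 0$, with limit $0 \to \ker\pi_\infty \to E_\infty \to B_\infty \to 0$ and with each connecting map on the ideals surjective. I would then apply Proposition~\ref{prop:lift-prescribed-quotient} to this system with (in the notation of that proposition) $m = 1$, with $\phi_1 \colon A \to \prod_{j>1}B$ the constant $^*$-homomorphism $a \mapsto (\phi(a))_{j>1}$, and with $\tilde\phi := \sigma_\infty$: the hypothesis $\pi_\infty\tilde\phi = \rho_{\infty,1}^B \phi_1$ holds since both sides equal $\iota^B_\infty\phi$. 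The conclusion yields an integer $n \geq 1$ and a $^*$-homomorphism $\tilde\phi_n \colon A_0 \to \prod_{j>n}E$ with $\pi_n \tilde\phi_n$ equal to the constant $^*$-homomorphism $a_0 \mapsto (\phi(\alpha(a_0)))_{j>n}$. Composing $\tilde\phi_n$ with the coordinate projection onto, say, the $(n+1)$-st factor gives a $^*$-homomorphism $\tilde\phi \colon A_0 \to E$ with $\pi\tilde\phi = \phi\alpha$, which completes the proof. One could instead first invoke Lemma~\ref{lem:perturb} to replace the $\sigma_n$ by honest $^*$-homomorphisms $A_0 \to E$ approximating $\phi\alpha$ and then assemble the same sequence-algebra picture.

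The main obstacle is conceptual rather than computational: the point is to recognize that the merely approximate lift $(\sigma_\lambda)$ of $\phi$ becomes, in the sequence algebra, a genuine $^*$-homomorphism $\sigma_\infty \colon A \to E_\infty$ whose image in $B_\infty$ is the constant embedding of $\phi$ --- exactly the data on which the prescribed-quotient semiprojectivity of $\alpha$ (the relative refinement of Blackadar's Theorem~\ref{thm:liftable-closed}) can act. Surjectivity of the coordinate-forgetting maps on the ideals $\prod_{j>n}\ker\pi$ is what makes Proposition~\ref{prop:lift-prescribed-quotient} applicable, and projecting the resulting finite-stage lift onto a single coordinate yields the desired honest lift into $E$. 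The only genuinely fiddly points are the reduction to a countable index set and the identification $\ker(\pi_\infty) = \ell^\infty(\mathbb{N}, \ker\pi)/c_0(\mathbb{N}, \ker\pi)$.
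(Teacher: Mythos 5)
Your argument is correct, but it takes a genuinely different route from the paper. You pass to the sequence algebra $E_\infty=\ell^\infty(\mathbb N,E)/c_0(\mathbb N,E)$, view it as the sequential limit of the truncated products $\prod_{j>n}E$ (with surjective connecting maps, also on the ideals $\prod_{j>n}\ker\pi$), and then invoke Proposition~\ref{prop:lift-prescribed-quotient} with $m=1$, constant quotient data $\phi_1$, and $\tilde\phi=\sigma_\infty$, finally projecting the finite-stage lift onto one coordinate; the semiprojectivity of $\alpha$ is used exactly once, inside that proposition. The paper instead stays at a finite stage throughout: it factors $\alpha=\alpha_1\alpha_0$ with both factors semiprojective (via \cite[Lemma~1.2]{top-e}), uses Lemma~\ref{lem:perturb} relative to $\alpha_1$ to replace $\sigma_\lambda\alpha_1$ by an honest $^*$-homomorphism $\sigma'\colon A_1\to E$ that approximately lifts $\phi\alpha_1$, and then uses Theorem~\ref{thm:liftable-closed} relative to $\alpha_0$ to convert the approximate lift into an exact lift of $\phi\alpha$. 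Your approach buys a cleaner conceptual statement with no $\epsilon$--$\delta$ bookkeeping and no factorization through an auxiliary $A_1$, at the cost of the two points you flag: the reduction of the index set to $\mathbb N$ (genuinely needed, since semiprojectivity in this paper is defined only against sequential systems, and made routine by the equicontinuity and pointwise-boundedness from Proposition~\ref{prop:ptwise-bounded}\ref{ptbd3}) and the identifications $E_\infty\cong\varinjlim_n\prod_{j>n}E$ and $\ker(\pi_\infty)\cong\ell^\infty(\mathbb N,\ker\pi)/c_0(\mathbb N,\ker\pi)$ (the latter via $\operatorname{dist}(e,\ker\pi)=\|\pi(e)\|$). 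One caveat: your closing aside--first applying Lemma~\ref{lem:perturb} to $\alpha$ to get honest $^*$-homomorphisms $A_0\to E$ and then assembling the same picture--does not work as stated, because that step already consumes the semiprojectivity of $\alpha$, and descending from the sequence algebra to an exact lift in $E$ with prescribed quotient would then require a further semiprojective morphism into $A_0$ (this is precisely why the paper factors $\alpha$ through $A_1$). The main argument, however, does not rely on that aside and is sound.
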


\begin{proof}
  By \cite[Lemma~1.2]{top-e}, there are a separable $C^*$-algebra $A_1$ and semiprojective $^*$-ho\-mo\-mor\-phisms $\alpha_0 \colon A_0 \rightarrow A_1$ and $\alpha_1 \colon A_1 \rightarrow A$ such that $\alpha = \alpha_1 \alpha_0$.
  Apply Theorem~\ref{thm:liftable-closed} to the semiprojective $^*$-homomorphism $\alpha_0$ with $\mathcal F = \emptyset$ and $\epsilon = 1$ to obtain the corresponding finite set $\mathcal G \subseteq A_1$ and $\delta > 0$.
  Then apply Lemma~\ref{lem:perturb} with $\alpha_1$ in place of $\alpha$ and $\delta/2$ in place of $\delta$ to obtain a corresponding finite set $\mathcal H \subseteq A$ and $\gamma > 0$.
  Choose $\lambda$ such that for all $a_1, a_2 \in \mathcal{H}$, we have $\|\sigma_\lambda(a_1a_2) - \sigma_\lambda(a_1) \sigma_\lambda(a_2)\| < \gamma$ and $\|\pi(\sigma_\lambda(a)) - \phi(a)\| < \delta/2$ for all $a \in \mathcal G$.
  By the choice of $\mathcal H$ and $\gamma$, there is a
  $^*$-homomorphism $\sigma' \colon A_1 \rightarrow E$ such that
  $\|\sigma'(a) - \sigma_\lambda(\alpha_1(a))\| < \delta/2$ for all
  $a\in \mathcal{G}$.
  Now, we have $\|\pi(\sigma'(a)) - \phi(\alpha_1(a))\| < \delta$ for all $a \in \mathcal G$.
  By the choice of $\mathcal G$ and $\delta$, there is a $^*$-homomorphism $\tilde \phi \colon A_0 \rightarrow E$ such that $\pi \tilde\phi = \phi\alpha_1\alpha_0 = \phi\alpha$.
\end{proof}

\section{A relative homotopy lifting property}\label{sec:hlp}

We now recall the formal definition of the homotopy lifting property described in the introduction and introduce a relative version of the property for $^*$-ho\-mo\-morph\-isms.

\begin{definition}\label{def:hlp}
  For $C^*$-algebras $A_0$, $A$, $B$, and $E$ and $^*$-homomorphisms $\alpha \colon A_0 \rightarrow A$ and $\pi \colon E \rightarrow B$ with $\pi$ surjective, we say the pair $(\alpha, \pi)$ satisfies the \emph{homotopy lifting property} if the diagram completion problem
  \begin{equation}
    \begin{tikzcd}
      A  \arrow[bend 	left]{ddrrr}{\tilde\phi} \arrow[bend right]{dddrr}[swap]{\theta}&[-10pt] &[-10pt] &[15pt] \\[-10pt]
      & A_0 \arrow{ul}[swap]{\alpha} \arrow[dashed]{dr}{\tilde\theta} & & \\[-10pt]
      & & IE \arrow{r}{{\rm ev}_0^E} \arrow{d}[swap]{I\pi} & E \arrow{d}{\pi} \\[15pt]
      & & IB \arrow{r}[swap]{{\rm ev}_0^B} & B 
    \end{tikzcd}
  \end{equation}
  always has a solution $\tilde\theta$, where each arrow represents a $^*$-homomorphism.
  We say $(A, \pi)$ satisfies the \emph{homotopy lifting property} if $(\mathrm{id}_A, \pi)$ does.
\end{definition}

The following result of Blackadar is the main result of \cite{Blackadar16}. It provides large classes of examples of $C^*$-algebras for which the homotopy lifting property holds.

\begin{theorem}[{\cite[Theorem~5.1]{Blackadar16}}]
  \label{thm:hlp-sp}
  If $A$ is a separable semiprojective $C^*$-algebra and $\pi$ is a surjective $^*$-homomorphism, then $(A, \pi)$ satisfies the homotopy lifting property.
\end{theorem}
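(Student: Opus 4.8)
The plan is to reduce the path-lifting problem to finitely many lifts over short subintervals of $I=[0,1]$ via a compactness argument, and to perform each such lift by invoking the stability theorem (Theorem~\ref{thm:liftable-closed}) in the case $A_0=A$, $\alpha=\mathrm{id}_A$; this is permissible since $A$ being semiprojective means precisely that $\mathrm{id}_A$ is semiprojective. So suppose $\tilde\phi\colon A\to E$ and $\theta\colon A\to IB$ are given with $\pi\tilde\phi=\mathrm{ev}_0^B\theta$. First I would apply Theorem~\ref{thm:liftable-closed} to $\mathrm{id}_A$ with $\mathcal F=\emptyset$ and $\epsilon=1$, producing a finite set $\mathcal G\subseteq A$ and a $\delta>0$ that depend only on $A$ (not on $B$, $E$, or $\pi$); this uniformity is what lets the same $\mathcal G$, $\delta$ be used at every step. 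Since each of the finitely many maps $t\mapsto\theta_t(a)$, $a\in\mathcal G$, is continuous on the compact interval $I$, there is $\eta>0$ with $\|\theta_t(a)-\theta_{t'}(a)\|<\delta$ whenever $a\in\mathcal G$ and $|t-t'|\le\eta$. Fix a partition $0=s_0<\dots<s_N=1$ with $s_{j+1}-s_j\le\eta$.

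I would then build $\tilde\theta$ inductively, maintaining that it is a $^*$-homomorphism $A\to C([0,s_j],E)$ with $\mathrm{ev}_0\tilde\theta=\tilde\phi$ and $C([0,s_j],\pi)\,\tilde\theta=\theta|_{[0,s_j]}$; in particular $\tilde\theta_{s_j}:=\mathrm{ev}_{s_j}\tilde\theta$ then lifts $\theta_{s_j}$ along $\pi$. To extend over $[s_j,s_{j+1}]$, form the pullback $M_j=C([s_j,s_{j+1}],B)\times_BE$ over $\mathrm{ev}_{s_j}$ and $\pi$, and note that $\tau_j\colon C([s_j,s_{j+1}],E)\to M_j$, $g\mapsto(\pi\circ g,\,g(s_j))$, is surjective: given $(f,e)\in M_j$, lift $f$ to some $g_0$ (the map $C([s_j,s_{j+1}],\pi)$ is onto), note $g_0(s_j)-e\in\ker\pi$, and replace $g_0$ by $t\mapsto g_0(t)-\chi(t)\big(g_0(s_j)-e\big)$ for $\chi$ continuous with $\chi(s_j)=1$. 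The assignments $\phi_0\colon a\mapsto\big(\mathrm{const}_{\theta_{s_j}(a)},\,\tilde\theta_{s_j}(a)\big)$ and $\psi_0\colon a\mapsto\big(\theta|_{[s_j,s_{j+1}]}(a),\,\tilde\theta_{s_j}(a)\big)$ are $^*$-homomorphisms $A\to M_j$ (here $\pi\tilde\theta_{s_j}=\theta_{s_j}$ is used), we have $\|\phi_0(a)-\psi_0(a)\|=\sup_{t\in[s_j,s_{j+1}]}\|\theta_{s_j}(a)-\theta_t(a)\|<\delta$ for $a\in\mathcal G$, and $\phi_0$ lifts along $\tau_j$ to $a\mapsto\mathrm{const}_{\tilde\theta_{s_j}(a)}$. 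Theorem~\ref{thm:liftable-closed} then supplies a $^*$-homomorphism $\tilde\psi_0\colon A\to C([s_j,s_{j+1}],E)$ with $\tau_j\tilde\psi_0=\psi_0$, i.e.\ $C([s_j,s_{j+1}],\pi)\,\tilde\psi_0=\theta|_{[s_j,s_{j+1}]}$ and $\mathrm{ev}_{s_j}\tilde\psi_0=\tilde\theta_{s_j}$. As $\tilde\psi_0$ and the current $\tilde\theta$ agree at $s_j$, gluing them gives the extension to $C([0,s_{j+1}],E)$ with the invariant intact. After $N$ steps this yields $\tilde\theta\colon A\to IE$ with $\mathrm{ev}_0^E\tilde\theta=\tilde\phi$ and $I\pi\,\tilde\theta=\theta$.

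The main obstacle is arranging exact compatibility at the gluing points, not the compactness or gluing bookkeeping. If one instead extended over $[s_j,s_{j+1}]$ by applying Theorem~\ref{thm:liftable-closed} directly to $C([s_j,s_{j+1}],\pi)$ — comparing $\theta|_{[s_j,s_{j+1}]}$ with the constant path at $\theta_{s_j}$ — one would obtain a lift whose value at $s_j$ is only close to $\tilde\theta_{s_j}$, and repairing this by a connecting homotopy (in the spirit of Corollary~\ref{cor:wishlist}) would reparametrize the base path and thereby break the on-the-nose identity $I\pi\,\tilde\theta=\theta$. Encoding the prescribed value $\tilde\theta_{s_j}$ at the left endpoint into the pullback $M_j$ is exactly what avoids this. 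Given Theorem~\ref{thm:liftable-closed} — whose proof is the substantial part of \cite{Blackadar16} — this is essentially the entire argument, and it uses only the $A_0=A$, $\alpha=\mathrm{id}_A$ case.
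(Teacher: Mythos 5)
Your argument is correct. Note, though, that the paper does not prove this statement at all: Theorem~\ref{thm:hlp-sp} is quoted verbatim from \cite[Theorem~5.1]{Blackadar16}, so there is no internal proof to compare against. Measured against the cited source and the techniques used elsewhere in the paper, your proof is essentially the standard (Blackadar-style) argument: subdivide $I$ using uniform continuity of $\theta$ on the finite set $\mathcal G$, and at each step apply the stability theorem (Theorem~\ref{thm:liftable-closed} with $\alpha=\id_A$) to the surjection onto the endpoint-anchored pullback $M_j=C([s_j,s_{j+1}],B)\times_B E$, which is exactly the mapping-cylinder device the authors themselves use (cf.\ $Z_\pi$ in the proof of Theorem~\ref{thm:hlp-qdl} and the pullback $\hat E\to\hat B$ in Corollary~\ref{cor:wishlist}). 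The two points on which the argument could have foundered are both handled correctly: the pair $(\mathcal G,\delta)$ furnished by Theorem~\ref{thm:liftable-closed} is independent of the quotient map, so the same data serves every $\tau_j$; and encoding the value at $s_j$ into $M_j$ yields exact, not merely approximate, agreement at the gluing points, so the identity $(I\pi)\tilde\theta=\theta$ holds on the nose. Your verification that $\tau_j$ is surjective and that the glued map is a $^*$-homomorphism into $C([0,s_{j+1}],E)$ (via the pullback description of that algebra) is also sound.
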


We have been unable to prove a relative version of Theorem~\ref{thm:hlp-sp} for semiprojective $^*$-homomorphisms. If such a result is true, then the asymptotic homotopy lifting property would hold for all separable $C^*$-algebras and surjective $^*$-homomorphisms via our proof of Theorem~\ref{thm:ahlp-asp} (see Theorem~\ref{thm:ahlp}).
The following relative homotopy lifting property with respect to approximate $^*$-homomorphisms will lead to Theorem~\ref{thm:ahlp-qd}.

\begin{theorem}\label{thm:hlp-qdl}
  Let $A_0$, $A$, $B$, and $E$ be $C^*$-algebras with $A_0$ and $A$ separable.
  If $\alpha \colon A_0 \rightarrow A$ is a semiprojective $^*$-homomorphisms and $\pi \colon E \rightarrow B$ is a surjective approximately decomposable $^*$-homomorphism, then $(\alpha, \pi)$ satisfies the homotopy lifting property.
\end{theorem}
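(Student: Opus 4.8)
The plan is to recast the diagram completion problem of Definition~\ref{def:hlp} as a single relative lifting problem along a surjective $^*$-homomorphism, and then to solve it by combining Lemma~\ref{lem:approx-lift} with the approximate direct sum decomposition of $\pi$ afforded by Proposition~\ref{prop:approx-direct-sum}.

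First I would form the pullback $C^*$-algebra
\begin{equation*}
  D = \big\{ (g, e) \in IB \oplus E : \ev_0^B(g) = \pi(e) \big\}
\end{equation*}
together with the $^*$-homomorphism $p \colon IE \to D$ given by $p(f) = \big(I\pi(f),\, \ev_0^E(f)\big)$. Since $\pi$ is surjective so is $I\pi$, and given $(g,e) \in D$ one may lift $g$ to some $f_0 \in IE$ along $I\pi$, observe $e - \ev_0^E(f_0) \in \ker\pi$, and replace $f_0$ by $t \mapsto f_0(t) + (1-t)\big(e - \ev_0^E(f_0)\big)$; this shows $p$ is surjective. The compatibility $\ev_0^B\theta = \pi\tilde\phi$ built into the hypotheses of Theorem~\ref{thm:hlp-qdl} says precisely that $\psi \colon A \to D$, $\psi(a) = \big(\theta(a), \tilde\phi(a)\big)$, is a well-defined $^*$-homomorphism, and unwinding definitions shows that a $^*$-homomorphism $\tilde\theta \colon A_0 \to IE$ solves the diagram completion problem if and only if $p\tilde\theta = \psi\alpha$. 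Applying Lemma~\ref{lem:approx-lift} with $(E, B, \pi, \phi)$ there replaced by $(IE, D, p, \psi)$, it therefore suffices to produce an approximate morphism $(\sigma'_\lambda) \colon A \to IE$ with $\lim_\lambda \|p(\sigma'_\lambda(a)) - \psi(a)\| = 0$ for all $a \in A$.

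To construct $(\sigma'_\lambda)$, I would invoke Proposition~\ref{prop:approx-direct-sum} for the extension $0 \to \ker\pi \to E \xrightarrow{\pi} B \to 0$ to obtain self-adjoint, pointwise-bounded approximate morphisms $(\rho_\lambda) \colon E \to \ker\pi$ and $(\sigma_\lambda) \colon B \to E$ with $\pi\sigma_\lambda = \id_B$ and $\rho_\lambda + \sigma_\lambda\pi = \id_E$ for all $\lambda$ (identifying $\ker\pi$ with its image in $E$), arranged moreover to be equicontinuous --- a routine modification of the proofs of Lemma~\ref{lem:qdelicious-bounded} and Proposition~\ref{prop:approx-direct-sum} that threads a Bartle--Graves continuous selection through the arguments, in the spirit of Proposition~\ref{prop:ptwise-bounded}\ref{ptbd3}. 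Then $g \mapsto \big(t \mapsto \sigma_\lambda(g(t))\big)$ defines an approximate morphism $IB \to IE$, and I would set
\begin{equation*}
  \sigma'_\lambda(a)(t) = \rho_\lambda\big(\tilde\phi(a)\big) + \sigma_\lambda\big(\theta(a)(t)\big), \qquad a \in A,\ t \in I.
\end{equation*}
Using $\pi\rho_\lambda = 0$, $\pi\sigma_\lambda = \id_B$, $\ev_0^B\theta = \pi\tilde\phi$, and $\rho_\lambda + \sigma_\lambda\pi = \id_E$, a direct computation gives $I\pi(\sigma'_\lambda(a)) = \theta(a)$ and $\sigma'_\lambda(a)(0) = \tilde\phi(a)$ exactly, so in fact $p\sigma'_\lambda = \psi$ on the nose, and Lemma~\ref{lem:approx-lift} then produces the desired $\tilde\theta$.

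The step I expect to require the most care is verifying that $(\sigma'_\lambda)$ is a genuine approximate morphism into $IE$ in the sense of Definition~\ref{def:approx-morph}. Three points arise: that $t \mapsto \sigma'_\lambda(a)(t)$ is continuous, which is exactly what forces the equicontinuity arrangement above, since $\theta(a)(I)$ is merely a compact subset of $B$; that the approximate-linearity, self-adjointness, and approximate-multiplicativity defects hold \emph{uniformly in} $t \in I$, which follows from equicontinuity together with a finite cover of $\theta(a)(I)$ by small balls; and that the cross terms $\rho_\lambda(\tilde\phi(a_1))\,\sigma_\lambda(\theta(a_2)(t))$ produced by adding two approximate morphisms vanish in the limit, uniformly in $t$. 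For the last point one rewrites $\rho_\lambda(\tilde\phi(a_1)) = \tilde\phi(a_1) - \sigma_\lambda(\pi(\tilde\phi(a_1)))$ and applies the approximate bimodule and approximate multiplicativity properties of $(\sigma_\lambda)$, exactly as in the verification of Remark~\ref{rem:approx-direct-sum}, the uniformity over the compact set $\theta(a_2)(I)$ again coming from equicontinuity. Granting these routine but fiddly estimates, the displayed formula defines an approximate morphism with $p\sigma'_\lambda = \psi$, and the proof is complete.
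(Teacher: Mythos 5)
Your proposal is correct and is, in substance, the paper's own argument: the paper likewise reduces the problem to Lemma~\ref{lem:approx-lift} applied to the surjection $IE \rightarrow Z_\pi = \{(e, f) \in E \oplus IB : \pi(e) = f(0)\}$ (your pullback $D$ with the coordinates swapped, with the same surjectivity argument), and it likewise takes as candidate lift the formula $s \mapsto \rho_\lambda(\tilde\phi(a)) + \sigma_\lambda(\theta(a)(s))$ coming from Proposition~\ref{prop:approx-direct-sum}. The one point where you genuinely diverge is the continuity/equicontinuity issue, which is exactly where the paper says it must ``work a bit harder.'' The paper keeps the exact (linear, possibly discontinuous) splitting of Proposition~\ref{prop:approx-direct-sum} and then perturbs $\sigma_\lambda$ to the equicontinuous $\sigma'_\lambda$ of Proposition~\ref{prop:ptwise-bounded}\ref{ptbd3}; this destroys the exact identities, so compatibility with the mapping-cylinder map holds only in the limit---which is all Lemma~\ref{lem:approx-lift} needs---and uniformity in $s$ of the defects is recovered by an Arzel\`a--Ascoli argument. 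You instead build equicontinuity into the decomposition so that $\pi\sigma_\lambda = \id_B$ and $\rho_\lambda + \sigma_\lambda\pi = \id_E$ survive and $p\sigma'_\lambda = \psi$ holds exactly. That strengthening is not stated in the paper, but it does go through as you indicate: in the pullback argument of Lemma~\ref{lem:qdelicious-bounded} one replaces the self-adjoint linear lift of $\tilde\sigma_\Lambda$ by a Bartle--Graves continuous selection of $\ell^\infty(\Lambda, E) \rightarrow P$ composed with $\tilde\sigma_\Lambda$ and symmetrized; this preserves $\pi\sigma_\lambda = \id_B$, pointwise closeness to the original net (hence the approximate morphism and bimodule properties), and yields equicontinuity, at the unavoidable cost of linearity---which neither your argument nor the definition $\rho_\lambda = \id_E - \sigma_\lambda\pi$ actually needs, though you should say explicitly that linearity is dropped. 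Your compactness-of-$\theta(a)(I)$ estimates for uniformity and the cross-term computation via the bimodule property (as in Remark~\ref{rem:approx-direct-sum}) are the same estimates the paper packages into its Arzel\`a--Ascoli step, so the two routes are interchangeable: the paper's avoids restating Proposition~\ref{prop:approx-direct-sum}, while yours buys exact compatibility and a marginally cleaner invocation of Lemma~\ref{lem:approx-lift}.
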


\begin{proof}
  Consider $^*$-homomorphisms $\theta \colon A \rightarrow IB$ and $\tilde\phi \colon A \rightarrow E$ with $\pi \tilde\phi = \mathrm{ev}_0^B \theta$.
  Let $Z_\pi = \{ (e, f) \in E\oplus IB : \pi(e) = f(0) \}$ be the mapping cylinder of $\pi$.
  The map $\varpi \colon IE\to Z_\pi$ given by $\varpi(g) = ( g(0), \pi g )$ is a $^*$-homomorphism.  Further, note that $\varpi$ is surjective.  Indeed, if $(e, f) \in Z_\pi$, then the surjectivity of $I\pi$ implies there exists $\tilde f \in IE$ such that $\pi\tilde f = f$.  Define $g \in IE$ by 
  \begin{equation}
  	g(s) = (1 - s) \big(e - g(0)\big) + g(t), \quad s \in I.
  \end{equation}
  Then $\pi g = f$ and $g(0) = e$, so $\varpi(g) = (e, f)$, as required.  By Lemma~\ref{lem:approx-lift}, it is enough to construct an approximate morphism $(\tilde{\theta}_\lambda)\colon A\to IE$ with \begin{equation}\label{eq:rhlp-target}
  	\lim_\lambda \big\|\varpi\big(\tilde\theta_\lambda (a)\big) - \big(\tilde{\phi}(a), \theta(a)\big)\big\| = 0, \quad a \in A.
  \end{equation}
  Indeed, in this case, $(\tilde \phi \oplus \theta)\alpha$ lifts along $\varpi$ to a $^*$-homomorphism $\tilde\theta \colon A_0 \rightarrow IE$, which necessarily satisfies $\mathrm{ev}_0^E \tilde\theta = \tilde\phi\alpha$ and $(I\pi)\tilde\theta = \theta\alpha$.
	
  To this end, let $\iota \colon \ker(\pi) \rightarrow E$ be the inclusion map and use Proposition~\ref{prop:approx-direct-sum} to obtain approximate morphisms $(\sigma_\lambda)\colon B\to E$ and $(\rho_\lambda)\colon E\to \ker(\pi)$ that are self-adjoint, linear, pointwise-bounded, and satisfy
  \begin{equation}
    \rho_\lambda \iota = \id_{\ker(\pi)}, \quad
    \pi\sigma_\lambda = \id_B, \quad\text{and} \quad
    \iota \rho_\lambda + \sigma_\lambda \pi = \id_E
  \end{equation}
  for every $\lambda$.
  Define $\tilde{\vartheta}_\lambda\colon A\to IE$ by
  \begin{equation}
    \tilde{\vartheta}_\lambda(a)(s)
    = \rho_\lambda \big(\tilde{\phi}(a)\big) +
    \sigma_\lambda\big( \theta(a)(s)\big), \quad a \in A,\ s \in I.
  \end{equation}
  For all $a\in A$ and for all $\lambda$, we have
  \begin{align}
    \begin{split}\label{eq:eval0}
      \tilde{\vartheta}_\lambda(a)(0)
      &= \rho_\lambda\big( \tilde{\phi}(a) \big) +
      \sigma_\lambda\big( \pi\big(\tilde{\phi}(a)\big) \big)\\
      &= (\iota\rho_\lambda + \sigma_\lambda \pi)\big(
        \tilde{\phi}(a)\big)\\
      &= \tilde{\phi}(a).
    \end{split}
  \end{align}
  Moreover,
  \begin{equation}
    \pi\big( \tilde\vartheta_\lambda(a)(s) \big) = \pi\big(
    \sigma_\lambda\big(\theta(a)(s)\big) \big) = \theta(a)(s), \quad a \in A,\ s \in I,\ \lambda.
  \end{equation}
  Therefore, $\varpi \tilde{\vartheta}_\lambda = \tilde{\phi} \oplus \theta$ for all $\lambda$.
	
  The proof would be finished if we proved that $(\tilde{\vartheta}_\lambda)$ is an approximate morphism.
  Unfortunately, this may not be the case, and we must work a bit harder to obtain an approximate morphism that does the job.
  To this end, use Proposition~\ref{prop:ptwise-bounded}\ref{ptbd3} to obtain an approximate morphism $(\sigma'_\lambda) \colon B \rightarrow E$, indexed by the same directed set, that is pointwise-bounded, equicontinuous, and satisfies $\lim_\lambda \| \sigma'_\lambda (b) - \sigma_\lambda (b) \| = 0$ for all $b\in B$.
  Define $\tilde\theta_\lambda\colon A\to IE$ by
  \begin{equation}
    \tilde\theta_\lambda(a)(s)
   = \rho_\lambda\big(
      \tilde{\phi}(a) \big) + \sigma'_\lambda\big( \theta(a)(s) \big),
    \quad a \in A,\ s \in I,
  \end{equation}
  and note that
  \begin{equation}\label{eq:1}
    \lim_\lambda \| \tilde\theta_\lambda
    (a)(s) -
      \tilde\vartheta_\lambda (a)(s) \| = 0, \quad 
     a \in A,\ s \in I.
  \end{equation}
  
  Fix $a_1, a_2 \in A$.
  Let $f_\lambda \in I\mathbb C$ be given by
  \begin{equation}
    f_\lambda(s) = \| \tilde{\theta}_\lambda(a_1a_2)(s) - \tilde{\theta}_\lambda(a_1)(s) \tilde{\theta}_\lambda(a_2)(s) \|, \quad s \in I.
  \end{equation}
  Then $(f_\lambda)$ is uniformly bounded, equicontinuous, and converges pointwise to $0$. The Arzel\`a--Ascoli Theorem implies that $(f_\lambda)$ is contained in a $\|\cdot\|$-compact subspace of $I\mathbb C$.
  Since every $\|\cdot\|$-convergent subnet of $(f_\lambda)$ converges to 0, it follows that $(f_\lambda)$ converges uniformly to 0.
  Therefore,
  \begin{equation}
    \lim_\lambda \|\tilde\theta_\lambda(a_1 a_2) - \tilde\theta_\lambda(a_1)	\tilde\theta_\lambda(a_2) \| = 0, \quad a_1, a_2 \in A.
  \end{equation}
  A similar argument shows that
  \begin{equation}
    \lim_\lambda \big\|\tilde\theta_\lambda(r_1a_1 + s_2a_2) -	\big(r_1\tilde\theta_\lambda(a_1) + r_2\tilde\theta_\lambda(a_2)\big)	\big\| = 0, \quad r_1, r_2 \in \mathbb C,\ a_1, a_2 \in A,
  \end{equation}
  and also that
  \begin{equation}
    \lim_\lambda \|\tilde\theta_\lambda(a^*) -	\tilde\theta_\lambda(a)^* \| = 0, \quad a \in A.
  \end{equation}
  The last three equations together imply that $(\tilde\theta_\lambda)$ is an approximate morphism.
  
  Note that \eqref{eq:1} implies that
  \begin{equation}
    \lim_\lambda \big\| \pi\big( \tilde\theta_\lambda(a)(s) \big) - \theta(a)(s) \big\| = 0, \quad a \in A,\ s \in I.
  \end{equation}
  Another application of equicontinuity shows that this convergence is uniform in $s$, and so 
  \begin{equation}\label{eq:second-coord}
  	\lim_\lambda \big\|(I\pi)\big( \tilde\theta_\lambda(a)\big) - \theta(a) \big\| = 0, \quad a \in A.
  \end{equation}
  Further, $\lim_\lambda \|\tilde\theta_\lambda(a)(0) - \tilde\phi(a)\| = 0$ for all $a \in A$ by \eqref{eq:eval0} and \eqref{eq:1}.  This together with \eqref{eq:second-coord} implies \eqref{eq:rhlp-target}, as required.
\end{proof}

\section{The asymptotic homotopy lifting property}\label{sec:ahlp}

Without further ado, we formally introduce the asymptotic homotopy lifting property discussed in the introduction.

\begin{definition}\label{def:ahlp}
  For $C^*$-algebras $A$, $B$, and $E$ and a surjective $^*$-homomorphism $\pi \colon E \rightarrow B$, we say the pair $(A, \pi)$ has the \emph{asymptotic homotopy lifting property} if the diagram completion problem
  \begin{equation}%\label{eq:ahlp-diagram}
    \begin{tikzcd}
      A \arrow[phantom]{ddr}[description]{\scriptstyle\cong}
      \arrow[phantom]{rrd}[description]{\scriptstyle\cong}
      \arrow[dashed]{dr}[description]{\approx}{\tilde\theta}\arrow[bend right]{ddr}[description]{\approx}[swap]{\theta} \arrow[bend left]{drr}[description]{\approx}{\tilde\phi} & &[10pt] \\ & IE 
      \arrow{d}[swap]{I\pi} \arrow{r}{{\rm ev}_0^E} & E \arrow{d}{\pi} \\[10pt] & IB \arrow{r}[swap]{{\rm ev}_0^B} & B 
    \end{tikzcd}
  \end{equation}
  always has a solution; i.e., given asymptotic morphisms $\tilde\phi \colon A \xrightarrow\approx E$ and $\theta \colon A \xrightarrow\approx IB$ with $\mathrm{ev}^B_0 \theta \cong \pi \tilde\phi$, there is an asymptotic morphism $\tilde\theta \colon A \xrightarrow\approx IE$ with $\mathrm{ev}^E_0 \tilde\theta \cong \tilde\phi$ and $(I\pi)\tilde \theta \cong \theta$.
\end{definition}

Theorems~\ref{thm:ahlp-asp} and~\ref{thm:ahlp-qd} will both be derived as corollaries of Theorem~\ref{thm:ahlp} below, which reduces the asymptotic homotopy lifting property to the relative homotopy lifting property for genuine $^*$-homomorphisms.
The proof is deferred to the next section, but we will show here how deduce the main theorems in the introduction from this theorem.

\begin{theorem}\label{thm:ahlp}
  Suppose $A$, $B$, and $E$ are $C^*$-algebras with $A$ separable and $\pi \colon E \rightarrow B$ is a surjective $^*$-homomorphism.
  Fix a shape system $(\underline A, \underline \alpha)$ for $A$.
  If $(\alpha_n, \pi)$ and $(\alpha_n, I\pi)$ satisfy the homotopy lifting property for all $n \geq 1$, then $(A, \pi)$ satisfies the asymptotic homotopy lifting property.
\end{theorem}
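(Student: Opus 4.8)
The plan is to use Dadarlat's diagrammatic representation machinery from Section~\ref{sec:approx} together with the relative homotopy lifting property hypothesis, iterating it up the shape system. First I would apply Proposition~\ref{prop:shape-lift} (or rather the corollary giving diagrammatic representations) to the given asymptotic morphisms: write $\tilde\phi \colon A \xrightarrow\approx E$ via a diagrammatic representation $(\underline{\tilde\phi}, \underline{\tilde h}, \underline t)$ and write $\theta \colon A \xrightarrow\approx IB$ via a diagrammatic representation $(\underline\theta, \underline k, \underline t)$ with respect to the shape system $(\underline A, \underline \alpha)$ (after passing to a common evaluation sequence and, using Lemma~\ref{lem:shift}, a common shift). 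Here $\tilde\phi_n \colon A_n \rightarrow E$ and $\theta_n \colon A_n \rightarrow IB$ are genuine $^*$-homomorphisms. The compatibility $\mathrm{ev}_0^B \theta \cong \pi \tilde\phi$ then becomes, up to a small perturbation that can be absorbed using Lemma~\ref{lem:close-diagrams}, the statement that $\mathrm{ev}_0^B \theta_n$ and $\pi \tilde\phi_n$ are close on a prescribed finite set; using Theorem~\ref{thm:liftable-closed} (or Corollary~\ref{cor:wishlist}) one can arrange, after further shifting and passing down the shape system, that $\mathrm{ev}_0^B \theta_n = \pi \tilde\phi_n$ exactly for each $n$ — here is where one replaces $\tilde\phi_n$ by a nearby lift of $\mathrm{ev}_0^B\theta_n$.

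Once we have genuine $^*$-homomorphisms $\theta_n \colon A_n \rightarrow IB$ and $\tilde\phi_n \colon A_n \rightarrow E$ with $\mathrm{ev}_0^B\theta_n = \pi\tilde\phi_n$, the pair $(\theta_n, \tilde\phi_n)$ is precisely the data of a lifting problem for the pair $(\alpha_n, \pi)$ — more precisely, we are in the situation of Definition~\ref{def:hlp} with the square built from $\pi\colon E \to B$ (or, for the path-level compatibility between consecutive stages, from $I\pi \colon IE \to IB$). Applying the homotopy lifting property of $(\alpha_n, \pi)$ produces a $^*$-homomorphism $\tilde\theta_n \colon A_n \rightarrow IE$ with $\mathrm{ev}_0^E \tilde\theta_n = \tilde\phi_n \alpha_n$ and $(I\pi)\tilde\theta_n = \theta_n\alpha_n$. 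The remaining task is to assemble the $\tilde\theta_n$ into a diagrammatic representation of an asymptotic morphism $\tilde\theta \colon A \xrightarrow\approx IE$, which requires producing the connecting homotopies $\tilde H_n$ between $\tilde\theta_n$ (as a map $A_n \to IE$) and $\tilde\theta_{n+1}\alpha_n$. These homotopies should lift the corresponding homotopies $\tilde h_n$ (between $\tilde\phi_n$ and $\tilde\phi_{n+1}\alpha_n$) and $k_n$ (between $\theta_n$ and $\theta_{n+1}\alpha_n$) — this is a second-level lifting problem, living in $IIE = C(I\times I, E)$, and it is exactly here that the hypothesis on $(\alpha_n, I\pi)$ enters: the homotopy lifting property for $(\alpha_n, I\pi)$ (applied with the square $II\pi \colon IIE \to IIB$, or via the mapping-cylinder trick as in the proof of Theorem~\ref{thm:hlp-qdl}) gives the needed compatible homotopy after possibly passing to a further stage of the shape system.

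The main obstacle is the bookkeeping of the iterated passage down the shape system: each application of the homotopy lifting property for $(\alpha_n,\pi)$ or $(\alpha_n, I\pi)$, and each application of Theorem~\ref{thm:liftable-closed} to convert an approximate commuting square into an exact one, may force us to increase the index $n$ (replacing $A_n$ by some $A_{n'}$ with $n' > n$ and composing with $\alpha_{n',n}$). One must organize these so that the process does not run away — i.e.\ so that at stage $n$ one only needs finitely many increments and the resulting reindexed system is cofinal — and so that the finite sets $\mathcal F_n$ and tolerances $\epsilon_n$ of Lemma~\ref{lem:close-diagrams} are respected, guaranteeing that the diagrammatic representation one finally builds represents an asymptotic morphism whose equivalence class is independent of the choices and genuinely satisfies $\mathrm{ev}_0^E\tilde\theta \cong \tilde\phi$ and $(I\pi)\tilde\theta \cong \theta$. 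Concretely, I would fix in advance the data $(\mathcal F_n,\epsilon_n)$ from Lemmas~\ref{lem:close-diagrams} and~\ref{lem:finite-sets}, then run the construction inductively, at each step choosing the new index large enough to accommodate both the first-level lift (via $(\alpha_n,\pi)$) and the second-level homotopy lift (via $(\alpha_n, I\pi)$) and to keep all perturbations below the prescribed $\epsilon_n$ on $\mathcal F_n$; a diagonalization over $n$ then yields the desired $\tilde\theta$, and the two compatibility relations follow by comparing diagrammatic representations via Lemma~\ref{lem:close-diagrams} after applying the functors $\mathrm{ev}_0^E{}_*$ and $(I\pi)_*$.
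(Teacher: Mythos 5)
Your overall architecture is the same as the paper's: build compatible diagrammatic representations, lift the ``edges'' using the homotopy lifting property of $(\alpha_n,\pi)$, fill the squares using $(\alpha_n,I\pi)$ (the paper does this via a homeomorphism $I^2\to I^2$ sending the three relevant edges to $I\times\{0\}$, Lemma~\ref{lem:X}), assemble the result into a diagrammatic representation of $\tilde\theta$, and control everything with Lemmas~\ref{lem:close-diagrams} and~\ref{lem:shift}. The gap is in your first step, and it is not a bookkeeping issue: the square-filling step requires the two diagrammatic representations to be \emph{exactly} compatible, i.e.\ $\pi_*(\underline{\tilde\phi},\underline{\tilde h},\underline t)=(\mathrm{ev}_0^B)_*(\underline\theta,\underline k,\underline t)$ including the homotopies, $(I\pi)\tilde h_n=(I\mathrm{ev}_0^B)k_n$, not merely $\pi\tilde\phi_n=\mathrm{ev}_0^B\theta_n$ at the vertices. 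Without equality on the bottom edge, the partial data $\tilde\nu_n$ on the three edges is not a lift of the restriction of the prescribed square $k_{n+2}\alpha_{n+1}$, so the lifting problem you want to hand to $(\alpha_n,I\pi)$ is not even well posed. Your proposed mechanism --- take independent representations of $\theta$ and $\tilde\phi$, then use Theorem~\ref{thm:liftable-closed} (or Corollary~\ref{cor:wishlist}) to replace $\tilde\phi_n$ by a nearby exact lift of $\mathrm{ev}_0^B\theta_n$ --- only repairs the vertex maps. The homotopies $(I\pi)\tilde h_n$ and $(I\mathrm{ev}_0^B)k_n$ are two genuinely different paths in $B$ (only asymptotically close on finite sets), and no perturbation of the vertices, nor the quotient-constant bridging homotopies of Corollary~\ref{cor:wishlist}, will make them equal along their whole length.

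This is exactly the problem the paper's Lemma~\ref{lem:somehow} is designed to solve, and its engine is not the stability theorem but Proposition~\ref{prop:lift-prescribed-quotient}: using exactness of $C_b(\mathbb{R}_+,\cdot)$ (Lemma~\ref{lem:as-exact}) one forms the inductive system of extensions $C_b(\mathbb{R}_{\geq m_n},J)\to C_b(\mathbb{R}_{\geq m_n},E)\to C_b(\mathbb{R}_{\geq m_n},B)$ and, via a pullback argument exploiting semiprojectivity of the $\alpha_n$, produces lifts $\tilde\Phi_{n+1}$ whose image under $\bar\pi$ is \emph{literally} $\bar\psi_{n+2}\Theta_{n+2}\alpha_{n+1}$ while their image under the restriction maps agrees with the given $\tilde\phi_{\rm as}$. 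This simultaneously solves the two constraints that are in tension in your sketch: if you keep the homotopies from an independently chosen representation of $\tilde\phi$ you lose exact quotient compatibility, whereas if you simply lift the $B$-side homotopies along $\pi$ (which the homotopy lifting property would let you do) you lose control of the endpoints and the glued asymptotic morphism need no longer represent the \emph{given} $\tilde\phi$. The remaining ingredients of Lemma~\ref{lem:somehow} (Corollary~\ref{cor:wishlist} with its quotient-constant bridges, the reparametrizations $f_n$, and the summable $\epsilon_n$ estimates feeding into Lemma~\ref{lem:close-diagrams}) are then the bookkeeping you anticipated; but without the prescribed-quotient lifting step your construction cannot reach the exact compatibility that the second-level application of $(\alpha_n,I\pi)$ requires. (As a minor point, the index growth you worry about is handled in the paper by fixed shifts of one and two stages, removed at the end by Lemma~\ref{lem:shift}, rather than by a diagonalization.)
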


Combining this reduction theorem with Blackadar's homotopy lifting theorem (Theorem~\ref{thm:hlp-sp}) gives Theorem~\ref{thm:ahlp-asp}.

\begin{proof}[Proof of Theorem~\ref{thm:ahlp-asp}]
	Suppose $(\underline A, \alpha)$ is an inductive system of semiprojective $C^*$-algebras with limit $A$ and $\pi \colon E \rightarrow B$ is a surjective $^*$-homomorphism between $C^*$-algebras $B$ and $E$.  Then $I \pi \colon IE \rightarrow IB$ is also a surjective $^*$-homomorphism. By Theorem~\ref{thm:hlp-sp}, $(A_n, \pi)$ and $(A_n, I\pi)$ satisfy the homotopy lifting property for all $n \geq 1$, and it follows that $(\alpha_n, \pi)$ and $(\alpha_n, I\pi)$ satisfy the homotopy lifting property for all $n \geq 1$.  By Theorem~\ref{thm:ahlp}, the pair $(A, \pi)$ satisfies the asymptotic homotopy lifting property.
\end{proof}

Using essentially the same proof but quoting Theorem~\ref{thm:hlp-qdl} in place of Theorem~\ref{thm:hlp-sp} yields the following result.
Note that this implies Theorem~\ref{thm:ahlp-qd}, as quasidiagonal extensions are approximately decomposable (Proposition~\ref{prop:qd-qdl}).

\begin{corollary}\label{cor:ahlp-qdl}
	If $A$, $B$, and $E$ are separable $C^*$-algebras and $\pi \colon E \rightarrow B$ is a surjective approximately decomposable $^*$-homomorphism, then $(A, \pi)$ satisfies the asymptotic homotopy lifting property.
\end{corollary}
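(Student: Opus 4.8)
The plan is to follow the proof of Theorem~\ref{thm:ahlp-asp} almost verbatim, substituting the relative homotopy lifting theorem for approximately decomposable quotients (Theorem~\ref{thm:hlp-qdl}) for Blackadar's theorem (Theorem~\ref{thm:hlp-sp}). Fix a shape system $(\underline A, \underline \alpha)$ for $A$, which exists since $A$ is separable. In the proof of Theorem~\ref{thm:ahlp-asp} the only point at which $I\pi$ enters is the trivial observation that $I\pi \colon IE \rightarrow IB$ is again surjective; here we must in addition check that $I\pi$ is again \emph{approximately decomposable}. Granting this, Theorem~\ref{thm:hlp-qdl} applies to the semiprojective $^*$-homomorphisms $\alpha_n$ together with the surjective approximately decomposable $^*$-homomorphisms $\pi$ and $I\pi$, so that $(\alpha_n, \pi)$ and $(\alpha_n, I\pi)$ satisfy the homotopy lifting property for every $n \geq 1$, and Theorem~\ref{thm:ahlp} then yields the asymptotic homotopy lifting property for $(A, \pi)$.

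It remains to verify that $I\pi$ is approximately decomposable whenever $\pi$ is. Using Proposition~\ref{prop:approx-direct-sum} followed by Proposition~\ref{prop:ptwise-bounded}\ref{ptbd3}, fix an approximate morphism $(\sigma_\lambda) \colon B \rightarrow E$ that is self-adjoint, pointwise-bounded, equicontinuous, and satisfies the two conditions of Definition~\ref{def:qdelicious}. Define $\tau_\lambda \colon IB \rightarrow IE$ by $\tau_\lambda(f)(t) = \sigma_\lambda(f(t))$; equicontinuity of $(\sigma_\lambda)$ guarantees each $\sigma_\lambda$ is continuous, so $\tau_\lambda(f)$ really is a continuous function $[0,1] \rightarrow E$. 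For fixed $f, g \in IB$, each of the maps $t \mapsto f(t)$, $t \mapsto g(t)$, $t \mapsto f(t)g(t)$ has compact range, and equicontinuity of $(\sigma_\lambda)$ makes the family of functions
\[
  t \longmapsto \big\| \tau_\lambda(fg)(t) - \tau_\lambda(f)(t)\tau_\lambda(g)(t) \big\|
\]
equicontinuous and uniformly bounded on $[0,1]$; since it converges pointwise to $0$, an equicontinuous pointwise-convergent net on a compact space converges uniformly, so $\lim_\lambda \|\tau_\lambda(fg) - \tau_\lambda(f)\tau_\lambda(g)\| = 0$. The linearity relation is handled the same way, and the adjoint relation holds exactly because $\sigma_\lambda$ is self-adjoint; hence $(\tau_\lambda)$ is an approximate morphism. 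The splitting condition of Definition~\ref{def:qdelicious} for $(\tau_\lambda)$ and $I\pi$ follows from the $t$-pointwise statement $\lim_\lambda \|\pi(\sigma_\lambda(f(t))) - f(t)\| = 0$ upgraded to uniformity in $t$ by the same device, and likewise for the bimodule condition, where one additionally uses the uniform bound on $\|\sigma_\lambda(f(t))\|$ coming from pointwise-boundedness, equicontinuity, and compactness of $f([0,1])$.

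With the lemma on $I\pi$ in hand the remaining argument is identical to that of Theorem~\ref{thm:ahlp-asp}. The only genuine point requiring care---and it is a mild one---is the upgrade of the $t$-pointwise relations for $(\sigma_\lambda)$ to the uniform-in-$t$ relations needed for $(\tau_\lambda)$ to be an approximate morphism; without the equicontinuity supplied by Proposition~\ref{prop:ptwise-bounded}\ref{ptbd3} this step would fail. This is exactly the phenomenon already confronted in the proof of Theorem~\ref{thm:hlp-qdl} via the Arzel\`a--Ascoli theorem, so no new difficulty arises.
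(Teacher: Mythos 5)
Your proposal is correct and follows the same overall skeleton as the paper's proof: both reduce the corollary to the single claim that $I\pi$ is again approximately decomposable, and then invoke Theorem~\ref{thm:hlp-qdl} for the pairs $(\alpha_n,\pi)$ and $(\alpha_n,I\pi)$ followed by Theorem~\ref{thm:ahlp}. The only genuine divergence is how that claim is verified. The paper stays at the level of the net algebras: after Lemma~\ref{lem:qdelicious-bounded} the conditions of Definition~\ref{def:qdelicious} become exact identities for the induced $^*$-homomorphism $\sigma_\Lambda \colon B \rightarrow E_\Lambda$, one applies the functor $I(\,\cdot\,)$, composes with the natural embedding $IE_\Lambda \rightarrow (IE)_\Lambda$, and then lifts to a net $IB \rightarrow IE$; this completely sidesteps any uniformity-in-$t$ estimates, the compactness of $I$ being absorbed into that embedding. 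You instead work with a well-chosen representative net: Proposition~\ref{prop:ptwise-bounded}\ref{ptbd3} supplies an equicontinuous, pointwise-bounded, self-adjoint $(\sigma_\lambda)$, and you upgrade the pointwise-in-$t$ relations for $\tau_\lambda(f)(t)=\sigma_\lambda(f(t))$ to uniform ones by equicontinuity, compactness of the ranges $f(I)$, and the Arzel\`a--Ascoli device already used in the proof of Theorem~\ref{thm:hlp-qdl}. This works: the two conditions of Definition~\ref{def:qdelicious} do survive the pointwise perturbation produced by Proposition~\ref{prop:ptwise-bounded}\ref{ptbd3} (a triangle-inequality check you leave implicit), and the uniform bound on $\|\sigma_\lambda(f(t))\|$ over $t$ and $\lambda$, as well as the uniform modulus of continuity of the $\sigma_\lambda$ on compact subsets of $B$, follow from pointwise boundedness plus equicontinuity by a routine covering argument (you mention the former for the bimodule condition; it is also needed for approximate multiplicativity). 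The trade-off is that your argument is more hands-on and repeats the Arzel\`a--Ascoli mechanism, while the paper's sequence-algebra formulation is shorter and pushes all the analysis into the lifting step; both are complete and yield the same statement.
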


\begin{proof}
	First we show that $I\pi \colon I E \rightarrow I B$ is approximately decomposable.
	Using that $\pi$ is approximately decomposable, there is an approximate morphism $(\sigma_\lambda) \colon B \rightarrow E$, indexed over a directed set $\Lambda$, as in Definition~\ref{def:qdelicious}.
	By Lemma~\ref{lem:qdelicious-bounded}, we may further assume that $(\sigma_\lambda)$ is pointwise bounded.
	Let $\sigma_\Lambda \colon B \rightarrow E_\Lambda$ be the $^*$-homomorphism induced by $(\sigma_\lambda)$ as in the remarks after Definition~\ref{def:approx-morph}, let $\pi_\Lambda \colon E_\Lambda \rightarrow B_\Lambda$ be the $^*$-homomorphism given by applying $\pi$ in each coordinate, and let $\iota^E_\Lambda \colon E \rightarrow E_\Lambda$ be the canonical inclusion.
	Then $\pi_\Lambda \sigma_\Lambda = \iota^B_\Lambda$ and $\iota^E_\Lambda(e)\sigma_\Lambda(b) = \sigma_\Lambda(\pi(e)b)$ for all $b \in B$ and $e \in E$.
	
	Composing $I \sigma_\Lambda \colon IB \rightarrow IE_\Lambda$ with the natural embedding $IE_\Lambda \rightarrow (IE)_\Lambda$ produces a $^*$-homomorphisms $\bar\sigma_{\Lambda} \colon IB \rightarrow (IE)_\Lambda$ satisfying $(I\pi)_\Lambda\bar\sigma_\Lambda = \iota^{IB}_\Lambda$ and $\iota^{IE}_\Lambda(f) \bar\sigma_\Lambda(g) = \bar\sigma_\Lambda((I\pi)(f)g)$ for all $f \in IE$ and $g \in IB$.
	Any lift of $\bar\sigma_\Lambda$ to a net of functions $(\bar\sigma_\lambda) \colon IB \rightarrow IE$ will satisfy the conditions of Definition~\ref{def:qdelicious}, proving that $I\pi$ is approximately decomposable.
	
	To finish the proof, fix a shape system $(\underline A, \underline, \alpha)$ for $A$ and note that $(\alpha_n, \pi)$ and $(\alpha_n, I\pi)$ satisfy the homotopy lifting property for all $n \geq 1$ by Theorem~\ref{thm:hlp-qdl}.
	The result now follows from Theorem~\ref{thm:ahlp}.
\end{proof}

The simple idea behind the proof of Theorem~\ref{thm:ahlp} is somewhat obscured by the analytic details, so we provide a sketch.
For the purposes of this sketch we assume both $(A_n, \pi)$ and $(A_n, I\pi)$ satisfy the homotopy lifting property (which is the case if $A_n$ is semiprojective by Theorem~\ref{thm:hlp-sp}). This extra hypothesis allows us to suppress the index shift in the proof of Theorem~\ref{thm:ahlp}, which adds no extra technical difficulty but introduces slightly distracting notation.
Consider asymptotic morphisms $\tilde \phi \colon A \xrightarrow\approx E$ and $\theta \colon A \xrightarrow\approx IB$ satisfying $\mathrm{ev}_0^B \theta \cong \pi \tilde\phi$.
We need to construct an asymptotic morphism $\tilde\theta \colon A \xrightarrow\approx IE$ with $\mathrm{ev}_0^E \tilde\theta \cong \tilde \phi$ and $(I\pi)\tilde\theta \cong \theta$.

Fix a shape system $(\underline A, \underline \alpha)$ for $A$.  We will construct $\tilde\theta$ via a diagrammatic representation $(\underline{\tilde \theta}, \underline{\tilde k}, \underline t) \colon (\underline A, \underline \alpha) \rightarrow IE$.  The most difficult part of the proof is constructing compatible diagrammatic representations for $\tilde\phi$ and $\theta$, which is the content of Lemma~\ref{lem:somehow}.  This will provide diagrammatic representations 
\begin{equation} 
	(\underline{\tilde \phi}, \underline{\tilde h}, \underline t) \colon (\underline A, \underline \alpha) \rightarrow E \quad \text{and} \quad (\underline \theta, \underline k, \underline t) \colon (\underline A, \underline \alpha) \rightarrow IB
\end{equation}
satisfying $(\mathrm{ev}_0^E)_*(\underline \theta, \underline k, \underline t) = \pi_*(\underline{\tilde \phi}, \underline{\tilde h}, \underline t)$.  Let $(\underline \phi, \underline h, \underline t) = \pi_*(\underline{\tilde\phi}, \underline{\tilde h}, \underline t)$.  

We prefer to visualize this data as in the diagram below:
\begin{equation}\label{eq:AB-picture}
\begin{aligned}
	\begin{tikzpicture}[scale=1,
	farrow/.style={decoration={markings,
			mark=at position 0.5 with {\arrow{Computer Modern Rightarrow}}}}
	]
	
	\tikzstyle{every node}=[font=\footnotesize]
	
	% define column and row separation
	\def\mycolsep{2}
	\def\myrowsep{1.5}
	% define vertex diameter
	\def\r{1.5pt}
	\def\eps{1pt}
	
	%%%%%%%%%%%%%%%%%%%
	%%% LEFT SQUARE %%%
	%%%%%%%%%%%%%%%%%%%
	
	% fill in left square
	\def\leftsquare{ (0,0) -- (\mycolsep,0) -- (\mycolsep, \myrowsep) --
		(0, \myrowsep) -- cycle };
	\fill[pattern color=black!20,pattern=north east lines] \leftsquare;
	
	% define vertices of left square, going ctr clockwise
	\fill (0,0) [black] circle (\r) node (l1) {};
	\fill (\mycolsep,0) [black] circle (\r) node (l2) {};
	\fill (\mycolsep, \myrowsep) [black] circle (\r) node (l3) {};
	\fill (0, \myrowsep) [black] circle (\r) node (l4) {};
	
	% label the vertices
	\node[below left] (ll1) at (0,0) {$ \phi_1 $};
	\node[below right] (ll2) at (\mycolsep,0) {$ \phi_2\alpha_1 $};
	%\node[above right] (ll3) at (\mycolsep, \myrowsep) {$ \psi_2\alpha_1 $};
	%\node[above left] (ll4) at (0, \myrowsep) {$ \psi_1 $};
	
	% draw edges of left square
	\begin{scope}[
		every path/.style={
			postaction={nomorepostaction,farrow,decorate}
		}
		]
		
		\draw (l1) -- (l2) node[midway, below] {$ h_1 $};
		\draw (l2) -- (l3) node[midway, right] {$ \theta_2\alpha_1 $};
		\draw (l1) -- (l4) node[midway, left] {$ \theta_1 $};
		\draw (l4) -- (l3); %node[midway, above] %{$\kappa_1 $};
	\end{scope}
	
	% label the left square 2-cell homotopy
	\node[fill=white, rounded corners] at (.5*\mycolsep, .5*\myrowsep)
	{$ k_1 $};
	
	%%%%%%%%%%%%%%%%%%%%
	%%% Right SQUARE %%%
	%%%%%%%%%%%%%%%%%%%%
	
	% the space between the squares
	\def\sqsep{4}
	
	% fill in right square
	\def\rightsquare{ (\sqsep,0) -- (\mycolsep+\sqsep,0) -- (\mycolsep+\sqsep, \myrowsep) -- (\sqsep, \myrowsep) -- cycle };
	\fill[pattern color=black!20,pattern=north east lines] \rightsquare;
	
	% define vertices of right square, going ctr clockwise
	\fill (\sqsep,0) [black] circle (\r) node (r1) {};
	\fill (\mycolsep+\sqsep,0) [black] circle (\r) node (r2) {};
	\fill (\mycolsep+\sqsep, \myrowsep) [black] circle (\r) node (r3) {};
	\fill (\sqsep, \myrowsep) [black] circle (\r) node (r4) {};
	
	% label the vertices
	\node[below left] (rr1) at (\sqsep,0) {$ \phi_2 $};
	\node[below right] (rr2) at (\mycolsep+\sqsep,0) {$ \phi_3\alpha_2 $};
	%\node[above right] (rr3) at (\mycolsep+\sqsep, \myrowsep) {$ \psi_3\alpha_2 $};
	%\node[above left] (rr4) at (\sqsep, \myrowsep) {$ \psi_2 $};
	
	% draw edges of right square
	\begin{scope}[
		every path/.style={
			postaction={nomorepostaction,farrow,decorate}
		}
		]
		
		\draw (r1) -- (r2) node[midway, below] {$ h_2 $};
		\draw (r2) -- (r3) node[midway, right] {$ \theta_3\alpha_2 $};
		\draw (r1) -- (r4) node[midway, left] {$ \theta_2 $};
		\draw (r4) -- (r3); % node[midway, above] {$ \kappa_2 $};
	\end{scope}
	
	% label the right square 2-cell homotopy
	\node[fill=white, rounded corners] at (.5*\mycolsep+\sqsep, .5*\myrowsep)
	{$ k_2 $};
	
	%%%%%%%%%%%%%%%%%
	%%% EXACT SEQ %%%
	%%%%%%%%%%%%%%%%%
	
	\node (x1) at (.5*\mycolsep, 1.6*\myrowsep) {$ \text{Hom}(A_1, B) $};
	\node (x2) at (.5*\mycolsep+\sqsep, 1.6*\myrowsep) {$  \text{Hom}(A_2, B) $};
	\node (x3) at (.5*\mycolsep+1.75*\sqsep, 1.6*\myrowsep) {$ \cdots $};
	
	\begin{scope}[arrows={-Computer Modern Rightarrow}]
		\draw (x3) -- (x2) node[midway, above] {$ \alpha_2^* $};
		\draw (x2) -- (x1) node[midway, above] {$ \alpha_1^* $};;
	\end{scope}
	
	\node (x3) at (.5*\mycolsep+1.75*\sqsep, .5*\myrowsep) {$ \cdots $};
	
	%%%%%%%%%%%%%%%%%%%%%%%%%%%%
	%%% ATTEMPT AT EDGE MAPS %%%
	%%%%%%%%%%%%%%%%%%%%%%%%%%%%
	
	\begin{scope}[on background layer]
		\def\funkypath{ (r4) .. controls (1.5*\mycolsep, .75*\myrowsep)
			.. (l3) -- (l2) .. controls (1.5*\mycolsep, .25*\myrowsep)
			.. (r1) -- cycle }
		
		\def\funkypath{ (r4.center) .. controls (1.5*\mycolsep,
			.75*\myrowsep) .. (l3.center) -- (l2.center) .. controls
			(1.5*\mycolsep, .25*\myrowsep) .. (r1.center) -- cycle}
		
		\fill[gray!20] \funkypath;
		
		\draw[arrows={-Computer Modern Rightarrow}, gray!80]
		(1.54*\mycolsep, .7*\myrowsep) -- (1.46*\mycolsep, .7*\myrowsep);
		\draw[arrows={-Computer Modern Rightarrow}, gray!80]
		(1.54*\mycolsep, .5*\myrowsep) -- (1.46*\mycolsep, .5*\myrowsep);
		\draw[arrows={-Computer Modern Rightarrow}, gray!80]
		(1.54*\mycolsep, .3*\myrowsep) -- (1.46*\mycolsep, .3*\myrowsep);
	\end{scope}
\end{tikzpicture}
\end{aligned}
\end{equation}
Here, we regard the sets $\mathrm{Hom}(A_n, B)$ as topological spaces with the point-norm topology.
Moreover, the maps $h_n, \theta_n \colon A_n \rightarrow IB$ and $k_n \colon A_n \rightarrow I(IB)$ are identified with continuous functions $I \rightarrow \mathrm{Hom}(A_n, B)$ and $I^2 \rightarrow \mathrm{Hom}(A_n, B)$.
The bridge between the squares is meant to suggest that the left edge of square $n+1$ is being mapped to the right edge of square $n$ and the asymptotic morphism $A \xrightarrow\approx IB$ is essentially given by gluing these squares together via this identification.

Now consider a similar picture with $E$ in place of $B$, where the bottom edges of the squares are given by the diagrammatic representation of $\tilde\phi$:
\begin{equation}\label{eq:AE-picture}
\begin{aligned}
\begin{tikzpicture}[scale=1,
	farrow/.style={decoration={markings,
			mark=at position 0.5 with {\arrow{Computer Modern Rightarrow}}}}
	]
	
	\tikzstyle{every node}=[font=\footnotesize]
	
	% define column and row separation
	\def\mycolsep{2}
	\def\myrowsep{1.5}
	% define vertex diameter
	\def\r{1.5pt}
	\def\eps{1pt}
	
	%%%%%%%%%%%%%%%%%%%
	%%% LEFT SQUARE %%%
	%%%%%%%%%%%%%%%%%%%
	
	\tikzdeclarepattern{name=north east stripes,
		type=uncolored,
		bottom left={(-.1pt,-.1pt)},
		top right={(10.1pt,10.1pt)},
		tile size={(10pt,10pt)},
		tile transformation={rotate=45},
		code={
			\tikzset{x=1pt,y=1pt}
			\draw[thin]  (0,2.5) -- (5,2.5) (5,7.5) -- (10,7.5); 
	} }
	
	% fill in left square
	\def\leftsquare{ (0,0) -- (\mycolsep,0) -- (\mycolsep, \myrowsep) --
		(0, \myrowsep) -- cycle };
	\fill[pattern color=black!20,pattern=north east stripes] \leftsquare;
	
	% define vertices of left square, going ctr clockwise
	\fill (0,0) [black] circle (\r) node (l1) {};
	\fill (\mycolsep,0) [black] circle (\r) node (l2) {};
	\draw (\mycolsep, \myrowsep) [black] circle (\r) node (l3) {};
	\draw (0, \myrowsep) [black] circle (\r) node (l4) {};
	
	% label the vertices
	\node[below left] (ll1) at (0,0) {$ \tilde\phi_1 $};
	\node[below right] (ll2) at (\mycolsep,0) {$ \tilde\phi_2\alpha_1 $};
	%node[above right] (ll3) at (\mycolsep, \myrowsep) {$ \tilde\psi_2\alpha_1 $};
	%\node[above left] (ll4) at (0, \myrowsep) {$ \tilde\psi_1 $};
	
	% draw edges of left square
	\begin{scope}[
		every path/.style={
			postaction={nomorepostaction,farrow,decorate}
		}
		]
		
		\draw (l1) -- (l2) node[midway, below] {$ \tilde h_1 $};
		\draw [dashed] (l2) -- (l3) node[midway, right] {$ \tilde\theta_2\alpha_1 $};
		\draw [dashed] (l1) -- (l4) node[midway, left] {$ \tilde\theta_1 $};
		\draw [dashed] (l4) -- (l3); %node[midway, above] {$ \tilde\kappa_1 $};
	\end{scope}
	
	% label the left square 2-cell homotopy
	\node[fill=white, rounded corners] at (.5*\mycolsep, .5*\myrowsep)
	{$ \tilde k_1 $};
	
	%%%%%%%%%%%%%%%%%%%%
	%%% Right SQUARE %%%
	%%%%%%%%%%%%%%%%%%%%
	
	% the space between the squares
	\def\sqsep{4}
	
	% fill in right square
	\def\rightsquare{ (\sqsep,0) -- (\mycolsep+\sqsep,0) -- (\mycolsep+\sqsep, \myrowsep) -- (\sqsep, \myrowsep) -- cycle };
	\fill[pattern color=black!20,pattern=north east stripes] \rightsquare;
	
	% define vertices of right square, going ctr clockwise
	\fill (\sqsep,0) [black] circle (\r) node (r1) {};
	\fill (\mycolsep+\sqsep,0) [black] circle (\r) node (r2) {};
	\draw (\mycolsep+\sqsep, \myrowsep) [black] circle (\r) node (r3) {};
	\draw (\sqsep, \myrowsep) [black] circle (\r) node (r4) {};
	
	% label the vertices
	\node[below left] (rr1) at (\sqsep,0) {$ \tilde\phi_2 $};
	\node[below right] (rr2) at (\mycolsep+\sqsep,0) {$ \tilde\phi_3\alpha_2 $};
	%\node[above right] (rr3) at (\mycolsep+\sqsep, \myrowsep) {$ \tilde\psi_3\alpha_2 $};
	%\node[above left] (rr4) at (\sqsep, \myrowsep) {$ \tilde\psi_2 $};
	
	% draw edges of right square
	\begin{scope}[
		every path/.style={
			postaction={nomorepostaction,farrow,decorate}
		}
		]
		
		\draw (r1) -- (r2) node[midway, below] {$ \tilde h_2 $};
		\draw [dashed] (r2) -- (r3) node[midway, right] {$ \tilde\theta_3\alpha_2 $};
		\draw [dashed] (r1) -- (r4) node[midway, left] {$ \tilde\theta_2 $};
		\draw [dashed] (r4) -- (r3); % node[midway, above] {$ \tilde\kappa_2 $};
	\end{scope}
	
	% label the right square 2-cell homotopy
	\node[fill=white, rounded corners] at (.5*\mycolsep+\sqsep, .5*\myrowsep)
	{$ \tilde k_2 $};
	
	%%%%%%%%%%%%%%%%%
	%%% EXACT SEQ %%%
	%%%%%%%%%%%%%%%%%
	
	\node (x1) at (.5*\mycolsep, 1.6*\myrowsep) {$ \text{Hom}(A_1, E) $};
	\node (x2) at (.5*\mycolsep+\sqsep, 1.6*\myrowsep) {$  \text{Hom}(A_2, E) $};
	\node (x3) at (.5*\mycolsep+1.75*\sqsep, 1.6*\myrowsep) {$ \cdots $};
	
	\begin{scope}[arrows={-Computer Modern Rightarrow}]
		\draw (x3) -- (x2) node[midway, above] {$ \alpha_2^* $};
		\draw (x2) -- (x1) node[midway, above] {$ \alpha_1^* $};;
	\end{scope}
	
	\node (x3) at (.5*\mycolsep+1.75*\sqsep, .5*\myrowsep) {$ \cdots $};
	
	%%%%%%%%%%%%%%%%%%%%%%%%%%%%
	%%% ATTEMPT AT EDGE MAPS %%%
	%%%%%%%%%%%%%%%%%%%%%%%%%%%%
	
	\begin{scope}[on background layer]
		\def\funkypath{ (r4) .. controls (1.5*\mycolsep, .75*\myrowsep)
			.. (l3) -- (l2) .. controls (1.5*\mycolsep, .25*\myrowsep)
			.. (r1) -- cycle }
		
		\def\funkypath{ (r4.center) .. controls (1.5*\mycolsep,
			.75*\myrowsep) .. (l3.center) -- (l2.center) .. controls
			(1.5*\mycolsep, .25*\myrowsep) .. (r1.center) -- cycle}
		
		\fill[gray!20] \funkypath;
		
		\draw[arrows={-Computer Modern Rightarrow}, gray!80]
		(1.54*\mycolsep, .7*\myrowsep) -- (1.46*\mycolsep, .7*\myrowsep);
		\draw[arrows={-Computer Modern Rightarrow}, gray!80]
		(1.54*\mycolsep, .5*\myrowsep) -- (1.46*\mycolsep, .5*\myrowsep);
		\draw[arrows={-Computer Modern Rightarrow}, gray!80]
		(1.54*\mycolsep, .3*\myrowsep) -- (1.46*\mycolsep, .3*\myrowsep);
	\end{scope}
\end{tikzpicture}
\end{aligned}
\end{equation}
The goal is to define the maps $\tilde\theta_n$ and $\tilde k_n$ completing the schematic in \eqref{eq:AE-picture} so that \eqref{eq:AE-picture} is a lift of \eqref{eq:AB-picture} along the quotient map $\pi$.  This will be done in two stages.  First, the homotopy lifting property for $(A_n, \pi)$ provides a $^*$-homomorphism $\tilde \theta_n \colon A_n \rightarrow IE$ such that $\mathrm{ev}_0^E\tilde\theta_n = \tilde\phi_n$ and $(I\pi) \tilde\theta_n= \theta_n$.  This fills in the left (and hence also the right) side of each square in \eqref{eq:AE-picture}.  

To complete the interior and top edge of the squares in \eqref{eq:AE-picture}, let $X \subseteq I^2$ denote the subspace consisting of the left, right, and bottom edge of the square, and note that there is a homeomorphism $I^2 \rightarrow I^2$ mapping $X$ onto $I \times \{0\} \subseteq I^2$.  View $k_n$ as a $^*$-homomorphism $A_n \rightarrow I^2 B$ and let $\tilde l_n \colon A_n \rightarrow XE$ denote the $^*$-homomorphisms determined by \eqref{eq:AE-picture}.  Then $(X\pi)\tilde l_n \colon A_n \rightarrow XB$ coincides with the composition of $k_n$ with the restriction map $I^2 B \rightarrow XB$.  The homotopy lifting property of $(A_n, I\pi)$ (in the form of Lemma~\ref{lem:X}) then provides a $^*$-homomorphism $\tilde k_n \colon A_n \rightarrow I^2E$ lifting $k_n$ and $\tilde l_n$, hence completing the diagram \eqref{eq:AE-picture}.

View $\tilde k_n$ as a map $A_n \rightarrow I(IE)$.  Then the maps $\tilde \theta_n$ and $\tilde k_n$ provide a diagrammatic representation $(\underline{\tilde \theta}, \underline{\tilde k}, \underline t)$ for an asymptotic $^*$-homomorphisms $\tilde\theta \colon A \xrightarrow\approx IE$.  By construction, 
\begin{equation}
	(\mathrm{ev}_0^E)_*(\underline{\tilde\theta}, \underline{\tilde k}, \underline t) = (\underline{\tilde \phi}, \underline{\tilde h}, \underline t) \quad \text{and} \quad (I\pi)_*(\underline{\tilde\theta}, \underline{\tilde k}, \underline t)  = (\underline \theta, \underline k, \underline t),
\end{equation}
and hence $\mathrm{ev}_0^E \tilde\theta \cong \tilde\phi$ and $(I\pi)\tilde\theta \cong \theta$, so $\tilde\theta$ satisfies the required properties.

In the actual proof of Theorem~\ref{thm:ahlp} in the next section, we only know that $(\alpha_n, \pi)$ and $(\alpha_n, I\pi)$ satisfy the homotopy lifting property instead of $(A_n, \pi)$ and $(A_n, I\pi)$.
The main difference is that one must shift the index in each application of the homotopy lifting property, both for lifting the left edges of the squares and the interior of the squares.
Hence the diagram in \eqref{eq:AE-picture} will lift the diagram obtained by shifting \eqref{eq:AB-picture} two stages to the left (and removing the left two squares).
This will not affect the limiting asymptotic morphisms.
Aside from this change and taking more care with the identifications being made, the proof of Theorem~\ref{thm:ahlp} will follow the above outline closely.

\section{Proof of Theorem~\ref{thm:ahlp}}

As discussed in the sketch of the proof of Theorem~\ref{thm:ahlp} in the previous section, the bulk of the work is in constructing compatible diagram representations of $\theta$ and $\tilde \phi$.  Before tackling this problem, we isolate the following simple lemma.

\begin{lemma}\label{lem:as-exact}
  % lemma 4
  The functor $C_b(\mathbb{R}_+, \,\cdot\,)$ is exact.
\end{lemma}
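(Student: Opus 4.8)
The plan is to unwind the definition of exactness. Fix a short exact sequence $0 \to J \xrightarrow{\iota} E \xrightarrow{\pi} B \to 0$ of $C^*$-algebras; I want to show that
\[
  0 \longrightarrow C_b(\mathbb{R}_+, J) \xrightarrow{\ \bar\iota\ } C_b(\mathbb{R}_+, E) \xrightarrow{\ \bar\pi\ } C_b(\mathbb{R}_+, B) \longrightarrow 0
\]
is exact, where $\bar\iota$ and $\bar\pi$ denote post-composition with $\iota$ and $\pi$. Two of the three conditions are immediate: since $\iota$ is isometric, so is $\bar\iota$, hence $\bar\iota$ is injective; and since $\ker\pi = \iota(J)$, a function $f \in C_b(\mathbb{R}_+, E)$ satisfies $\bar\pi(f) = 0$ if and only if $f(t) \in \ker\pi$ for every $t \in \mathbb{R}_+$, i.e.\ if and only if $f$ lies in the image of $\bar\iota$. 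So the only content of the lemma is the surjectivity of $\bar\pi$.

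For surjectivity I would invoke the Bartle--Graves selection theorem \cite[Theorem~4]{Bartle-Graves52} (also used, in a weaker form, in the proof of Proposition~\ref{prop:ptwise-bounded}), in the form that provides a continuous, positively homogeneous section: since $\pi$ is a bounded linear surjection of Banach spaces, there are a continuous map $s \colon B \to E$ with $\pi \circ s = \mathrm{id}_B$ and a constant $c > 0$ with $\|s(b)\| \leq c\|b\|$ for all $b \in B$. Given $g \in C_b(\mathbb{R}_+, B)$, the composite $s \circ g \colon \mathbb{R}_+ \to E$ is continuous and satisfies $\|s(g(t))\| \leq c\|g(t)\|$ for all $t$, so $s \circ g \in C_b(\mathbb{R}_+, E)$; since $\bar\pi(s \circ g) = \pi \circ s \circ g = g$, this shows $g$ is in the image of $\bar\pi$.

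The one real obstacle is the boundedness issue: a merely continuous section of $\pi$ need not be bounded on the bounded but (typically) non-relatively-compact set $g(\mathbb{R}_+)$, so the homogeneous, norm-controlled form of Bartle--Graves is genuinely needed here. If one wishes to avoid that refinement, there is a self-contained alternative: given $\epsilon > 0$, cover the paracompact space $\mathbb{R}_+$ by a locally finite open cover on each member of which $g$ varies by less than $\epsilon$, use an almost-isometric lifting of $\pi$ on each piece together with a subordinate partition of unity to build $e \in C_b(\mathbb{R}_+, E)$ with $\|\bar\pi(e) - g\| < \epsilon$, and then iterate and sum a geometric series to produce an exact bounded lift. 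Either route settles the surjectivity of $\bar\pi$ and completes the proof.
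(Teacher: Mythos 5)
Your proof is correct, but it proceeds differently from the paper. You and the paper agree that the only real content is surjectivity of $\bar\pi$ (injectivity of $\bar\iota$ and exactness in the middle are immediate). The paper handles surjectivity in an elementary, hands-on way: it cuts $\mathbb{R}_+$ into the compact intervals $[n,n+1]$, uses exactness of $C([n,n+1],\,\cdot\,)$ to produce norm-controlled lifts $g_n'$ of the restrictions $f|_{[n,n+1]}$, and then corrects each $g_n'$ by an affine, kernel-valued term so that consecutive pieces agree at the endpoints; the corrected pieces glue to a continuous lift with $\|g\| \leq 3\|f\|$. Your route instead invokes the strengthened Bartle--Graves theorem giving a continuous, positively homogeneous section $s$ with $\|s(b)\| \leq c\|b\|$, after which $s \circ g$ is immediately a bounded continuous lift. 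That is a genuinely cleaner one-step argument, and you correctly identify the one subtlety --- a merely continuous section (the form of Bartle--Graves actually quoted in the proof of Proposition~\ref{prop:ptwise-bounded}) need not be bounded on the non-relatively-compact set $g(\mathbb{R}_+)$, so the homogeneous, norm-controlled refinement is really what is needed; that refinement is a standard (Michael-selection-type) strengthening, so there is no gap, only a heavier citation than the paper uses. Your fallback argument (locally finite cover of the paracompact space $\mathbb{R}_+$, pointwise norm-preserving lifts patched by a partition of unity to get an $\epsilon$-approximate lift of controlled norm, then successive approximation and a geometric series) is a third valid route, closer in spirit to the paper's piecewise construction but replacing the endpoint-matching trick with an approximate-then-iterate scheme; if written out, it would need the small bookkeeping that each correction term lies in $C_b(\mathbb{R}_+,B)$ with norm bounded by the previous error, so the series of lifts converges in $C_b(\mathbb{R}_+,E)$. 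In short: what the paper's argument buys is self-containedness from the compact-interval case; what yours buys is brevity, at the price of a stronger selection theorem.
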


\begin{proof}
  Fix an exact sequence
  \begin{equation}
    0 \longrightarrow J \overset\iota\longrightarrow E \overset\pi\longrightarrow B \longrightarrow 0
  \end{equation}
  of $C^*$-algebras, and consider the induced sequence
  \begin{equation}
    0 \longrightarrow C_b(\mathbb{R}_+, J) \overset{\bar\iota}{\longrightarrow}
    C_b(\mathbb{R}_+, E) \overset{\bar\pi}{\longrightarrow}
    C_b(\mathbb{R}_+, B) \longrightarrow 0.
  \end{equation}
  Exactness at $C_b(\mathbb{R}_+, J)$ and $C_b(\mathbb{R}_+, E)$ is clear, so it suffices to prove that $\bar\pi$ is surjective.
  To this end, fix $f\in C_b(\mathbb{R}_+, B)$.
  Let $f_n = f |_{[n,n+1]}$ for $n\geq0$.
  By the exactness of $C([n,n+1], \,\cdot\,)$, there exists $g_n' \in C([n,n+1], E)$ with $\|g_n'\| \leq \|f_n\|$ and $\pi  g_n' = f_n$.
  Define $g_n \in C([n,n+1], E)$ by
  \begin{equation}
    g_n(t) = g_n'(t) + (t-n)\big( g_{n+1}'(n) - g_n'(n+1)
    \big).    
  \end{equation}
  Then $\pi g_n = f_n$, $\| g_n \| \leq 3\|f_n\|$, and $g_n(n+1) = g_{n+1}(n+1)$ for all $n \geq 0$.
  Therefore, $(g_n)_{n=0}^\infty$ induces $g\in C_b(\mathbb{R}_+, E)$ satisfying $\bar\pi(g) = \pi g = f$.
\end{proof}

The following lemma provides the diagrammatic representations we need.
When the result is applied, we will have $D = IB$ and $\psi = \mathrm{ev}_0^B$, but the structure of $IB$ and the map $\mathrm{ev}_0^B$ play no role in the proof, and it is notationally convenient to prove this slightly more general form.

\begin{lemma}\label{lem:somehow}
  Suppose $A$, $B$, $D$, and $E$ are $C^*$-algebras with $A$ separable, $(\underline A, \underline \alpha)$ is a shape system for $A$, and $\psi \colon D \rightarrow B$ and $\pi \colon E \rightarrow B$ are $^*$-homomorphisms with $\pi$ surjective.
  If $\tilde \phi \colon A \xrightarrow\approx E$ and $\theta \colon A \xrightarrow \approx D$ are asymptotic morphisms with $\psi \theta \cong \pi \tilde\phi$, then there are diagrammatic representations $(\underline \theta, \underline k, \underline t) \colon (\underline A, \underline \alpha) \rightarrow D$ and $(\underline{\tilde\phi}, \underline{\tilde h}, \underline t) \colon (\underline A, \underline \alpha) \rightarrow E$ of $\theta$ and $\tilde\phi$ such that $\psi_*(\underline \theta, \underline k, \underline t) = \pi_*(\underline{\tilde\phi}, \underline{\tilde h}, \underline t)$.
\end{lemma}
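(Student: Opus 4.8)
The plan is to merge $\theta$ and $\tilde\phi$ into a single asymptotic morphism valued in an auxiliary pullback $C^*$-algebra, to apply Dadarlat's shape-lifting construction to that one morphism, and then to push the resulting diagrammatic representation forward along the two coordinate projections; compatibility of the two triples and the sharing of a single evaluation sequence will then be automatic.

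First I would form the pullback $C = D \oplus_B E = \{(d,e) \in D \oplus E : \psi(d) = \pi(e)\}$, with coordinate $^*$-homomorphisms $p_D \colon C \to D$ and $p_E \colon C \to E$, which satisfy $\psi p_D = \pi p_E$. The crucial structural input is that $(\,\cdot\,)_{\rm as}$ carries this pullback square to a pullback square, i.e.\ that the canonical $^*$-homomorphism $C_{\rm as} \to D_{\rm as} \oplus_{B_{\rm as}} E_{\rm as}$ is an isomorphism. Injectivity is immediate from the formula for the norm on $C$, and for surjectivity one lifts a pair in the pullback to $f_D \in C_b(\mathbb R_+, D)$ and $f_E \in C_b(\mathbb R_+, E)$, observes that the difference of their images in $C_b(\mathbb R_+, B)$ lies in $C_0(\mathbb R_+, B)$, and corrects $f_E$ by an element of $C_0(\mathbb R_+, E)$. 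This last step uses the surjectivity of the map $C_0(\mathbb R_+, E) \to C_0(\mathbb R_+, B)$ induced by $\pi$, which holds because $\pi$ is surjective and $C_0(\mathbb R_+, \,\cdot\,)$ is exact --- the argument proving Lemma~\ref{lem:as-exact} applies verbatim, since the lift $g$ produced there satisfies $\|g_n\| \le 3\|f_n\|$ and hence lies in $C_0$ whenever $f$ does.

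Next, since $\psi\theta \cong \pi\tilde\phi$, writing $\psi_{\rm as} \colon D_{\rm as} \to B_{\rm as}$ and $\pi_{\rm as} \colon E_{\rm as} \to B_{\rm as}$ for the induced $^*$-homomorphisms, we have $\psi_{\rm as}\theta_{\rm as} = \pi_{\rm as}\tilde\phi_{\rm as}$ as maps $A \to B_{\rm as}$. Thus $(\theta_{\rm as}, \tilde\phi_{\rm as})$ defines a $^*$-homomorphism $A \to D_{\rm as} \oplus_{B_{\rm as}} E_{\rm as}$, and composing with the inverse of the isomorphism above yields a $^*$-homomorphism $\xi \colon A \to C_{\rm as}$ with $(p_D)_{\rm as}\xi = \theta_{\rm as}$ and $(p_E)_{\rm as}\xi = \tilde\phi_{\rm as}$. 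Because $A$ is separable, $\xi$ equals $\chi_{\rm as}$ for some asymptotic morphism $\chi \colon A \xrightarrow\approx C$; indeed, lifting $\xi$ along the quotient map $C_b(\mathbb R_+, C) \to C_{\rm as}$ by a Bartle--Graves section --- as in the proof of Proposition~\ref{prop:ptwise-bounded}\ref{ptbd3} --- and reading off the $\mathbb R_+$-coordinates produces such a $\chi$. Applying Proposition~\ref{prop:shape-lift} to $\chi$ and the given shape system $(\underline A, \underline\alpha)$, and converting the resulting ladder of restriction maps into a triple exactly as in the corollary immediately following Proposition~\ref{prop:shape-lift}, produces a diagrammatic representation $(\underline\chi, \underline l, \underline t) \colon (\underline A, \underline\alpha) \to C$ of $\chi$.

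Finally I would set $(\underline\theta, \underline k, \underline t) = (p_D)_*(\underline\chi, \underline l, \underline t)$ and $(\underline{\tilde\phi}, \underline{\tilde h}, \underline t) = (p_E)_*(\underline\chi, \underline l, \underline t)$. By the composition rule for diagrammatic representations these represent the asymptotic morphisms $p_D\chi$ and $p_E\chi$; since $(p_D\chi)_{\rm as} = (p_D)_{\rm as}\chi_{\rm as} = (p_D)_{\rm as}\xi = \theta_{\rm as}$, we have $p_D\chi \cong \theta$, so $(\underline\theta, \underline k, \underline t)$ is a diagrammatic representation of $\theta$, and similarly $(\underline{\tilde\phi}, \underline{\tilde h}, \underline t)$ is one of $\tilde\phi$. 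Both triples carry the same evaluation sequence $\underline t$ by construction, and, using $\psi p_D = \pi p_E$,
\[
  \psi_*(\underline\theta, \underline k, \underline t) = (\psi p_D)_*(\underline\chi, \underline l, \underline t) = (\pi p_E)_*(\underline\chi, \underline l, \underline t) = \pi_*(\underline{\tilde\phi}, \underline{\tilde h}, \underline t),
\]
as required. The main obstacle is establishing the pullback-compatibility of $(\,\cdot\,)_{\rm as}$ in the second paragraph (and carrying all the attendant identifications carefully); note that $\psi$ need not be surjective --- it enters only through the surjectivity of $\pi$, which is what makes $p_D$ (though possibly not $p_E$) surjective and what the $C_0$-exactness step needs.
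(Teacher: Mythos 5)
Your argument is correct, and it takes a genuinely different route from the paper's. You fold the two asymptotic morphisms into a single one with values in the pullback $C = D \oplus_B E$, check that $(\,\cdot\,)_{\rm as}$ carries this pullback square to a pullback square (the only nontrivial point being surjectivity of the induced map $C_0(\mathbb R_+, E) \rightarrow C_0(\mathbb R_+, B)$, which does follow from surjectivity of $\pi$ by the same patching as in Lemma~\ref{lem:as-exact}), lift the resulting $^*$-homomorphism $A \rightarrow C_{\rm as}$ to an asymptotic morphism $\chi \colon A \xrightarrow{\approx} C$ (here any set-theoretic lift to $C_b(\mathbb R_+, C)$ already works, since each value is a bounded continuous function and the defect of multiplicativity, linearity, and self-adjointness lands in $C_0(\mathbb R_+, C)$; Bartle--Graves is not actually needed), and then apply Proposition~\ref{prop:shape-lift} once, to $\chi$ and the given shape system. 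Exact equality $\psi_*(\underline\theta, \underline k, \underline t) = \pi_*(\underline{\tilde\phi}, \underline{\tilde h}, \underline t)$ is then automatic from $\psi p_D = \pi p_E$ and functoriality of the pushforward, and the pushed-forward triples induce $\theta_{\rm as}$ and $\tilde\phi_{\rm as}$, which is exactly what ``diagrammatic representation of $\theta$ (resp.\ $\tilde\phi$)'' means and all that the proof of Theorem~\ref{thm:ahlp} uses. The paper instead applies Proposition~\ref{prop:shape-lift} only to $\theta$, lifts $\tilde\phi_{\rm as}$ through the tower of surjections $C_b(\mathbb R_{\geq m_n}, E) \rightarrow C_b(\mathbb R_{\geq m_n}, B)$ with prescribed quotient via Proposition~\ref{prop:lift-prescribed-quotient} (a second use of relative semiprojectivity), and then repairs the mismatch at the endpoints of the homotopies by quantitative perturbation and reparametrization (Corollary~\ref{cor:wishlist}, Lemma~\ref{lem:close-diagrams}), at the cost of substantial bookkeeping; what that buys is an explicit, hands-on construction of the compatible representation of $\tilde\phi$ built over Dadarlat's representation of $\theta$, while your pullback argument buys brevity by transferring the compatibility problem into a single algebra where it becomes exact. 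For the lemma as stated, your approach suffices.
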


\begin{proof}
  By Proposition~\ref{prop:shape-lift}, there is a strictly increasing sequence $(m_n)_{n=1}^\infty$ of positive integers and a morphism
  \begin{equation}\label{eq:somehow1}
    \begin{tikzcd}
      A_1 \arrow{r}{\alpha_1} \arrow{d}{\Theta_1} &
      A_2 \arrow{r}{\alpha_2} \arrow{d}{\Theta_2} &
      A_3 \arrow{r}{\alpha_3} \arrow{d}{\Theta_3} & \cdots\\
      C_b(\mathbb{R}_{\geq m_1}, D) \arrow{r}{\rho^D_1} &
      C_b(\mathbb{R}_{\geq m_2}, D) \arrow{r}{\rho^D_2} &
      C_b(\mathbb{R}_{\geq m_3}, D) \arrow{r}{\rho^D_3} & \cdots
    \end{tikzcd}
  \end{equation}
  of inductive systems inducing $\theta_{\rm as} \colon A \rightarrow
  D_{\rm as}$, where $\rho^D_n$ denotes the restriction map.
  Define $\bar \psi_n \colon C_b(\mathbb R_{\geq m_n}, D) \rightarrow C_b(\mathbb R_{\geq m_n}, B)$ by $\bar \psi_n(f)(t) = \psi(f(t))$ for all $f \in C_b(\mathbb R_{\geq m_n}, D)$, $t \in \mathbb R_{\geq m_n}$, and $n \geq 1$.

  Let $J = \ker(\pi)$ and let $\iota \colon J \rightarrow E$ denote the inclusion map.
  By Lemma~\ref{lem:as-exact}, there is an inductive system of exact sequences of $C^*$-algebras
  \begin{equation}\label{eq:somehow3}
    \begin{tikzcd}
      0 \arrow{r} &[-2.2ex] C_b(\mathbb R_{\geq m_n}, J) \arrow{r}{\bar\iota_n} \arrow{d}{\rho^J_n} & \arrow{r} C_b(\mathbb R_{\geq m_n}, E) \arrow{r}{\bar\pi_n} \arrow{d}{\rho^E_n} & C_b(\mathbb R_{\geq m_n}, B) \arrow{r} \arrow{d}{\rho^B_n} &[-2.2ex] 0\hphantom{,}\\
      0 \arrow{r} & C_b(\mathbb R_{\geq m_{n+1}}, J) \arrow{r}{\bar\iota_{n+1}} & C_b(\mathbb R_{\geq m_{n+1}}, E) \arrow{r}{\bar\pi_{n+1}} & C_b(\mathbb R_{\geq m_{n+1}}, B) \arrow{r} & 0,
    \end{tikzcd}
  \end{equation}
  where the vertical maps are the restriction maps and the horizontal maps $\bar\iota_n$ and $\bar\pi_n$ are given by applying $\iota$ and $\pi$ pointwise.
  Taking the inductive limit over $n$ yields the exact sequence
  \begin{equation}\label{eq:somehow4}
    \begin{tikzcd}
      0 \arrow{r} & J_{\rm as} \arrow{r}{\iota_{\rm as}} & E_{\rm as} \arrow{r}{\pi_{\rm as}} & B_{\rm as} \arrow{r} & 0.
    \end{tikzcd}
  \end{equation}
  
  The maps $\tilde \psi_{n+2} \Theta_{n+2} \colon A_{n+2} \rightarrow C_b(\mathbb R_{m_{n +2}}, B)$ and $\tilde \phi_{\rm as} \alpha_{\infty, n+2} \colon A_{n+2} \rightarrow E_{\rm as}$ satisfy
  \begin{equation}
  	\rho^B_{\infty, n+2} \bar\psi_{n+2} \Theta_{n+2} = \pi_{\rm as} \tilde\phi_{\rm as} \alpha_{\infty, n+2},
  \end{equation} since $\pi \tilde{\phi} \cong \psi \theta$.
  Therefore, by Proposition~\ref{prop:lift-prescribed-quotient} (enlarging the $m_n$ if necessary), there exist $^*$-homomorphisms $\tilde\Phi_{n+1} \colon A_{n+1} \rightarrow C_b(\mathbb R_{\geq m_{n+2}}, E)$ for $n \geq 1$ such that
  \begin{equation}\label{eq:somehow5}
    \begin{aligned}
      \rho^E_{\infty, n+2} \tilde\Phi_{n+1}
      & = \tilde\phi_{\rm as} \alpha_{\infty, n+1}
%        = \tilde\phi_{\rm as} \alpha_{\infty, n+1},
        \quad\text{and}\\
      \bar\pi_{n+2}
      \tilde\Phi_{n+1} & =  \bar\psi_{n+2} \Theta_{n+2} \alpha_{n+1}.
    \end{aligned}
  \end{equation}
  Note that, for all $n \geq 1$, $\rho^E_{n+2} \tilde\Phi_{n+1}$ and
  $\tilde\Phi_{n+2} \alpha_{n+1}$ are equal after applying
  $\rho^E_{\infty, n+3}$, since by \eqref{eq:somehow5},
  \begin{align}\label{eq:somehow6}
    \begin{split}
      \rho^E_{\infty, n+3} \rho^E_{n+2} \tilde\Phi_{n+1}
      &\eqrel{=} \rho^E_{\infty, n+2} \tilde\Phi_{n+1} \\
      &\eqrel{$\overset{\eqref{eq:somehow5}}{=}$}  \tilde\phi_{\rm as} \alpha_{\infty, n+1} \\
      &\eqrel{=} \tilde\phi_{\rm as} \alpha_{\infty, n+2}
        \alpha_{n+1}\\
      &\eqrel{$\overset{\eqref{eq:somehow5}}{=}$}  \rho^E_{\infty, n+3} \tilde\Phi_{n+2} \alpha_{n+1}.
    \end{split}
  \end{align}
  In particular, for all $n \geq 1$ and $a \in A_{n+1}$,
  \begin{equation}\label{eq:somehow7}
    \lim_t \big\| \tilde\Phi_{n+1}(a)(t) - \tilde\Phi_{n+2}\big(
    \alpha_{n+1}(a) \big)(t) \big\| = 0.
  \end{equation}
  
  By Lemma~\ref{lem:finite-sets}, there are finite sets $\mathcal F_n \subseteq A_n$ such that $\alpha_n(\mathcal F_n) \subseteq \mathcal F_{n+1}$ for all $n \geq 1$ and such that $\bigcup_{n=1}^\infty \alpha_{\infty, n}(\mathcal F_n)$ is dense in $A$.
  Let $(\epsilon_n)_{n=1}^\infty$ be a decreasing sequence of strictly positive real numbers with $\sum_{n=1}^\infty \epsilon_n < \infty$.
  Apply Corollary~\ref{cor:wishlist} to $\alpha_n \colon A_n \rightarrow A_{n+1}$, $\mathcal F_n \subseteq A_n$, and $\epsilon_n > 0$ to obtain a corresponding finite set $\mathcal G_{n+1} \subseteq A_{n+1}$ and tolerance $\delta_{n+1} > 0$.
  Enlarging the $\mathcal G_n$ and decreasing the $\delta_n$ if necessary, we may assume $\mathcal F_n \subseteq \mathcal G_{n+1}$ and $\epsilon_n > \delta_{n+1}$.
  Then use \eqref{eq:somehow7} to construct a strictly increasing sequence $(t_n)_{n=1}^\infty \subseteq \mathbb R$ such that, for all $n \geq 1$ and $t_n \geq m_{n+2}$,
  \begin{equation}\label{eq:somehow8}
    \big\| \tilde\Phi_{n+1}(a)(t) - \tilde\Phi_{n+2}\big(\alpha_{n+1}(a)\big)(t)\big\| < \delta_{n+1}, \qquad a \in \mathcal G_{n+1},\ t \geq t_{n+1}.
  \end{equation}
  
  Define $^*$-homomorphisms $\theta_n \colon A_n \rightarrow D$ by $\theta_n(a) = \Theta_n(a)(t_n)$ for all $a \in A_n$ and $n \geq 1$.
  Also, for $n \geq 1$, define homotopies $k_n' \colon A_n \rightarrow ID$ by
  \begin{equation}\label{eq:somehow9}
    k_n'(a)(s) = \Theta_n(a)\big((1-s)t_n + st_{n+1}\big), \qquad a \in A_n,\ s \in I.
  \end{equation} 
  Then, by construction, $(\underline \theta, \underline k', \underline t)$ is a diagrammatic representation of $\theta$.
	
  Note that if we define $\tilde\phi_n \colon A_n \rightarrow E$ and $\tilde h_n \colon A \rightarrow IE$ in the analogous way, then $(\underline{\tilde\phi}, \underline{\tilde h}, \underline t)$ need not be a diagrammatic representation of an asymptotic morphism.
  Indeed, the $^*$-homomorphisms $\tilde\phi_{n+1} \alpha_n$ and $\mathrm{ev}^E_1 \tilde h_n$ need not be equal; they are, however, point-norm close by \eqref{eq:somehow8} and the choice of the sequence $(t_n)_{n=1}^\infty$.
  We will use Corollary~\ref{cor:wishlist} to bridge the gap between these $^*$-homomorphisms via short homotopies and perturb the homotopies $k'_n$ to account for this extra path.
  In the construction below, we also start by defining a homotopy $\tilde h'_{n+1} \colon A_{n+1} \rightarrow IE$; the index shift allows us to exploit the semiprojectivity of $\alpha_n$ in the application of Corollary~\ref{cor:wishlist}.
	
  Following this strategy, for $n \geq 1$, we define a $^*$-homomorphism $\tilde\phi_n \colon A_n \rightarrow E$ by
  \begin{equation}\label{eq:somehow11}
    \tilde\phi_n(a) = \tilde\Phi_{n+1}\big(\alpha_n(a)\big)(t_n), \quad a \in A_n,
  \end{equation}
  and a homotopy $\tilde h'_{n+1} \colon A_{n+1} \rightarrow IE$ by
  \begin{equation}\label{eq:somehow12}
       \tilde h'_{n+1}(a)(s) = \tilde\Phi_{n+1}(a)\big((1-s)t_n + s t_{n+1}\big), \quad a \in A_{n+1},\ s \in I.
  \end{equation}
  Note that $\mathrm{ev}^E_0 \tilde h'_{n+1} \alpha_n = \tilde\phi_n$.
  Moreover, we have
  \begin{equation}\label{eq:somehow12.5}
    \pi \tilde\phi_n = \psi\theta_n \qquad \text{and} \qquad (I\pi) \tilde h'_{n+1}\alpha_n = (I\psi) k'_n
  \end{equation}
  by the second equation in \eqref{eq:somehow5}.  Indeed, if $a \in A_n$, then
  \begin{align}
  	\begin{split}
        \pi\big(\tilde\phi_n(a)\big) 
      &\eqrel{$\overset{\eqref{eq:somehow11}}{=}$} \pi\big(\tilde\Phi_{n+1}\big(\alpha_n(a)\big)(t_n)\big) \\
      &\eqrel{$=$} (\bar \pi_n \tilde\Phi_{n+1}\alpha_n)(a)(t_n) \\
      &\eqrel{$\overset{\eqref{eq:somehow5}}{=}$} (\bar \psi_{n+2} \Theta_{n+2} \alpha_{n+2, n})(a)(t_n) \\
      &\eqrel{$\overset{\eqref{eq:somehow1}}{=}$} \psi\big(\Theta_n(a)(t_n)\big) \\
      &\eqrel{$=$} \psi(\theta_n(a)),
    \end{split}
  \end{align}
  which proves the first equality in \eqref{eq:somehow12.5}.  For the second equality, note that for each $a \in A_n$, we have
  \begin{align}
  	\begin{split}
  	    \big((I \pi) \tilde h_{n+1}'\alpha_n\big)(a)(s)
  	  &\eqrel{$=$} \pi\big(\big(\alpha_n(a)\big)(s)\big) \\
  	  &\eqrel{$\overset{\eqref{eq:somehow12}}{=}$}  \pi\big(\tilde\Phi_{n+1}(a)\big((1-s)t_n + st_{n+1}\big)\big) \\
  	  &\eqrel{$=$} (\bar \pi_{n+2}\tilde\Phi_{n+1})(a)\big((1-s)t_n + st_{n+1}\big) \\
  	  &\eqrel{$\overset{\eqref{eq:somehow5}}{=}$} (\bar \psi_{n+2}\Theta_{n+2} \alpha_{n+2, n})(a)\big((1-s)t_n + st_{n+1}\big) \\
  	  &\eqrel{$\overset{\eqref{eq:somehow1}}{=}$} (\bar\psi_n \Theta_n)(a)\big((1-s)t_n + st_{n+1}\big) \\
  	  &\eqrel{$=$} \psi\big( \Theta_n(a)\big((1-s)t_n + s t_{n+1}\big)\big) \\
  	  &\eqrel{$\overset{\eqref{eq:somehow9}}{=}$} \psi\big(k_n'(a)(s)\big),
    \end{split}
  \end{align}
  which shows the second equality in \eqref{eq:somehow12.5}.
  
  We can also estimate $\mathrm{ev}^E_1\tilde h'_{n+1}$ as follows.
  For $n \geq 1$ and $a \in A_{n+1}$, we have
  \begin{align}\label{eq:somehow13}
    \tilde h'_{n+1}(a)(1)
    & \stackrel{\eqref{eq:somehow12}}{=} \tilde\Phi_{n+1}(a)(t_{n+1}) \\
    \intertext{and}\label{eq:somehow14}
    \tilde\phi_{n+1}(a)
    & \stackrel{\eqref{eq:somehow11}}{=}
      \tilde\Phi_{n+2}\big( \alpha_{n+1}(a)\big) (t_{n+1}).
  \end{align}
  Combining the previous two equations with \eqref{eq:somehow8} produces
  \begin{equation}\label{eq:somehow15}
    \|\tilde h'_{n+1}(a)(1) - \tilde\phi_{n+1}(a) \| < \delta_{n+1}, \qquad a \in \mathcal G_{n+1}.
  \end{equation}
  
  For $n \geq 1$, choose $\gamma_n \in (0,1)$ such that for all $s_1, s_2 \in I$ with $|s_1 - s_2| < \gamma_n$, we have the estimates
  \begin{alignat}{2}\label{eq:somehow16}
    \|k'_n(a)(s_1) - k'_n(a)(s_2)\| &< \epsilon_n, & \hspace{3ex} & a \in \mathcal F_n,
  \intertext{and}\label{eq:somehow17}
    \|\tilde h'_{n+1}(a)(s_1) - \tilde h'_{n+1}(a)(s_2) \| &<
                                                             \epsilon_n,
   & \hspace{3ex} & a \in \mathcal F_{n+1}.
  \end{alignat}
  Define a continuous function
  \begin{equation}\label{eq:somehow18}
    f_n \colon I \rightarrow I \colon s \mapsto
    \begin{cases} 
      \frac{s}{1-\gamma_n} & 0 \leq s \leq 1 - \gamma_n, \\
      1 & 1 - \gamma_n < s \leq 1,
    \end{cases}
  \end{equation}
  and note that $|f_n(s) - s| < \gamma_n$ for all $s \in I$.
	
  Define $k_n \colon A_n \rightarrow ID$ by $k_n(a)(s) = k_n'(a)(f_n(s))$ for $a \in A_n$, $s \in I$, and $n \geq 1$.
  Then $\|k_n(a) - k_n'(a)\| < \epsilon_n$ for all $a \in \mathcal F_n$ and $n \geq 1$ by the choice of $\gamma_n$.
  Lemma~\ref{lem:close-diagrams} implies that $(\underline \theta, \underline k, \underline t)$ is diagrammatic representation of $\theta$.
  We work to construct a diagrammatic representation $(\underline{\tilde\phi}, \underline{\tilde h}, \underline t)$ of $\phi$ with $\psi_*(\underline \theta, \underline k, \underline t) = \pi_*(\underline{\tilde\phi}, \underline{\tilde h}, \underline t)$.
	
  For $n \geq 1$ and $a \in A_{n+1}$, we have
  \begin{equation}
    \begin{split}
      \pi\big( \tilde h'_{n+1}(a)(1) \big)
      &\eqrel{$\overset{\eqref{eq:somehow12}}{=}$} \pi\big(
        \tilde\Phi_{n+2}\big(\alpha_{n+1}(a)\big)(t_{n+1}) \big)\\
      &\eqrel{$\overset{\eqref{eq:somehow5}}{=}$} \psi\big(
        \Theta_{n+2}\big(\alpha_{n+1}(a)\big)(t_{n+1}) \big) \\
      &\eqrel{$\overset{\eqref{eq:somehow1}}{=}$}
        \psi\big(\Theta_{n+1}(a)(t_{n+1})\big) \\
      &\eqrel{$\overset{\eqref{eq:somehow9}}{=}$} 
           \psi\big( k_n'(a)(0) \big) \\
      &\eqrel{$=$}
        \psi\big( \theta_{n+1}(a) \big) \\
      &\eqrel{$\overset{\eqref{eq:somehow12.5}}{=}$}
        \pi\big( \tilde\phi_{n+1}(a) \big),
    \end{split}
  \end{equation}
  where the unlabeled equality holds since $(\underline \theta, \underline k', \underline t)$ is a diagrammatic representation of an asymptotic morphism.
  So, $\pi \mathrm{ev}^E_1\tilde h'_{n+1} = \pi \tilde\phi_{n+1} = \psi \theta_{n+1}$.
  The choice of $\mathcal G_{n+1}$ and $\delta_{n+1}$, together with \eqref{eq:somehow15}, now implies there is a homotopy $\tilde h''_n \colon A_n \rightarrow IE$ such that
  \begin{align}
    \tilde h''_n(a)(0)
    &= \tilde h'_{n+1}\big( \alpha_n(a) \big)(1),
    &&a \in A_n, \label{sh1} \\
    \tilde h''_n(a)(1)
    &= \tilde \phi_{n+1}\big( \alpha_n(a) \big),
    &&a \in A_n, \label{sh2}\\
    \pi\big( \tilde h''_n(a)(s) \big)
    &= \psi\big( \theta_{n+1}\big( \alpha_n(a) \big) \big)
    &&a \in A_n,\ s \in I, \label{sh3}
       \shortintertext{and}
    \big\| \tilde h''_n(a)(s) -
    &\tilde h'_{n+1}\big( \alpha_n(a) \big)(1) \big\| < \epsilon_n,
    &&a \in \mathcal F_n. \label{sh4}
  \end{align}
  Define a homotopy $\tilde h_n \colon A_n \rightarrow IE$ by
  \begin{equation}\label{eq:somehow20}
    \tilde h_n(a)(s) = 
    \begin{cases} 
      \tilde h'_{n+1}\big( \alpha_n(a) \big)
      \big(f_n(s)\big), 
      & 0 \leq s \leq 1 - \gamma_n, \\[.5ex]
      \tilde h''_n(a)\big(\frac{s - (1 - \gamma_n)}{\gamma_n}\big),
      & 1 - \gamma_n < s \leq 1,
    \end{cases}
    \qquad a \in A_n,\ s \in I,
  \end{equation}
  noting that continuity follows from \eqref{sh1}.
  For $n \geq 1$ and $a \in A_n$, using \eqref{eq:somehow11} and \eqref{eq:somehow12}, we have
  \begin{align}
  	\begin{split}
        \tilde h_n(a) 
      &\eqrel{$\overset{\eqref{eq:somehow20}}{=}$} \tilde h_n'\big(\alpha_n(a)\big)(0) \\
      &\eqrel{$\overset{\eqref{eq:somehow12}}{=}$} \tilde\Phi_{n+1}\big(\alpha_n(a)\big)(t_n) \\
      &\eqrel{$\overset{\eqref{eq:somehow11}}{=}$} \tilde\phi_n(a).
    \end{split}
  \shortintertext{So $\mathrm{ev}_0^E \tilde h_n = \tilde \phi_n$.  Similarly,}
    \begin{split}
        \tilde h_n(a)(1)
      &\eqrel{$\overset{\eqref{eq:somehow20}}{=}$} \tilde h_n''(a)(1) \\
      &\eqrel{$\overset{\eqref{sh2}}{=}$} \tilde\phi_{n+1}\big(\alpha_n(a)\big),
    \end{split}
  \end{align}
  and hence $\mathrm{ev}_1^E \tilde h_n = \tilde \phi_{n+1} \alpha_n$.
  Therefore, $(\underline{\tilde \phi}, \underline{\tilde h}, \underline t) \colon (\underline A, \underline \alpha) \rightarrow E$ is a diagrammatic representation of an asymptotic morphism.
	
  We claim $\pi_*(\underline{\tilde\phi}, \underline{\tilde h}, \underline t) = \psi_*(\underline \theta, \underline k, \underline t)$.
  To this end, it suffices to show $(I\pi) \tilde h_n = (I\pi) k_n$ for all $n \geq 1$ since the equalities $\pi \tilde \phi_n = \psi \theta_n$ would then follow by evaluating these homotopies at $0$.
  Fix $n \geq 1$, $a \in A_n$, and $s \in I$.
  If $0 \leq s \leq 1 - \gamma_n$, then
  \begin{align}
    \begin{split}\label{eq:somehow21}
      \pi\big( \tilde h_n(a)(s) \big)
      &\eqrel{$\overset{\eqref{eq:somehow20}}{=}$}
        \pi\big( \tilde h_{n+1}'\big(\alpha_n(a) \big)\big( f_n(s) \big) \big) \\
      &\eqrel{$\overset{\eqref{eq:somehow12.5}}{=}$}
      \psi\big( k'_n(a) \big(f_n(s) \big) \big) \\
      &\eqrel{$=$}\psi(k_n(a)(s)),
    \end{split}
    \intertext{using the definition of $k_n$ (given just after \eqref{eq:somehow23}) in the last equality.  Further, if $1 - \gamma_n < s \leq 1$, then}
    \begin{split}\label{eq:somehow22}
      \pi\big( \tilde h_n(a)(s) \big)
      &\eqrel{$\overset{\eqref{eq:somehow20}}{=}$}
        \pi\Big( \tilde h_n''(a)\Big(\frac{s - (1 - \gamma_n)}{\gamma_n}\Big)\Big) \\
      &\eqrel{$\overset{\eqref{sh3}}{=}$}
        \psi\big( \theta_{n+1}\big(\alpha_n(a) \big) \big) \\
      &\eqrel{$=$} \psi\big( k_n(a)(1) \big) \\
      &\eqrel{$\overset{\eqref{eq:somehow18}}{=}$}
        \psi\big( k_n(a) \big( f_n(s) \big) \big),
    \end{split}
  \end{align}
  where the third equality holds since $(\underline \theta, \underline k, \underline t)$ is a diagrammatic representation of an asymptotic morphism. Therefore, the claim holds.

  It remains to show that the diagrammatic representation $(\underline{\tilde \phi}, \underline{\tilde h}, \underline t)$ induces the asymptotic morphism $\tilde \phi$.  Let $\tilde\phi' \colon A \xrightarrow\approx E$ be the asymptotic morphism induced by $(\underline{\tilde \phi}, \underline{\tilde h}, \underline t)$.
  We must show $\tilde \phi \cong \tilde \phi'$.
  To this end, it suffices to show $\tilde \phi_{\rm as} = \tilde \phi'_{\rm as} \colon A \rightarrow E_{\rm as}$.
	
  We start by showing that for all $n \geq 1$,
  \begin{align}\label{eq:somehow23}
    \big\| \tilde h_n(a)(s) - \tilde h'_{n+1}\big( \alpha_n(a)
    \big)(s) \big\| 
    &< 2\epsilon_n,
    && a \in \mathcal F_n,\ s \in I.
       \intertext{For $s \in I$, we have $|f_n(s) - s| < \gamma_n$ as noted just after \eqref{eq:somehow18}. Since $\alpha_n(\mathcal F_n) \subseteq \mathcal F_{n+1}$, \eqref{eq:somehow17} implies}\label{eq:somehow24}
   \big\| \tilde h'_{n+1} \big( \alpha_n(a) \big) \big( f(s) \big) - \tilde h'_{n+1}\big( \alpha_n(a)\big)(s) \big\|
    &< \epsilon_n, \
    &&a \in \mathcal F_n,\ s \in I.
  \end{align}
  When $0 \leq s \leq 1 - \gamma_n$, \eqref{eq:somehow23} follows (with $\epsilon_n$ in place of $2 \epsilon_n$), using \eqref{eq:somehow20} to identify $\tilde h'_{n+1}(\alpha_n(a))(f(s))$ with $\tilde h_n(a)(s)$.
  When $1 - \gamma_n < s \leq 1$, we have $f(s)= 1$ by \eqref{eq:somehow18}, so \eqref{eq:somehow23} follows from combining \eqref{sh4} and \eqref{eq:somehow24}, using \eqref{eq:somehow20} to compute $\tilde h_n(a)(s)$.
  This completes the proof of \eqref{eq:somehow23}.
	
  For $n \geq 1$, let $\epsilon'_n = \sum_{m=n}^\infty \epsilon_m$ and note that $\lim_n \epsilon'_n = 0$, since $\sum_{m=1}^\infty \epsilon_m < \infty$. We will show that
  \begin{equation}\label{eq:somehow25}
    \big\|\tilde  \phi_{\rm as} \big(\alpha_{\infty, n}(a)\big) - \tilde\phi'_{\rm as}\big(\alpha_{\infty, n}(a) \big) \big\| \leq 2\epsilon_n + \epsilon'_n, \qquad a \in \mathcal F_n,\ n \geq 1.
  \end{equation}
  Since $\bigcup_{n=1}^\infty \alpha_{\infty, n}(\mathcal F_n)$ is dense in $A$, $\lim_n \epsilon_n = 0$, and $\lim_n \epsilon'_n = 0$, \eqref{eq:somehow25} will prove $\tilde\phi_{\rm as} = \tilde\phi'_{\rm as}$, completing the proof.
  
  Fix $n \geq 1$.
  Define $\tilde\Phi'_n \colon A_n \rightarrow C_b(\mathbb R_{\geq m_{n+2}}, E)$ by
  \begin{equation}\label{eq:somehow26}
    \tilde\Phi'_n(a)(t) = 
    \begin{cases}
      \tilde\phi_n(a) & m_{n + 2} \leq t < t_n \\[.5ex]
      \tilde h_m\big(\alpha_{m, n}(a)\big)(\frac{t - t_m}{t_{m+1} - t_m}) & t_m \leq t < t_{m+1},\ m \geq n,
    \end{cases}
  \end{equation}
  for $a \in A_n$ and $t \in \mathbb R_{\geq m_{n+2}}$.  Then
  \begin{equation}\label{eq:somehow27}
    \tilde\phi'_{\rm as} \alpha_{\infty, n} = \rho^E_{\infty, n+2} \tilde\Phi'_n \qquad \text{and} \qquad   \tilde \phi_{\rm as} \alpha_{\infty, n} \overset{\eqref{eq:somehow18}}{=} \rho^E_{\infty, n+2} \tilde\Phi_{n+1}\alpha_n,
  \end{equation}
  We will prove the following bound, which readily implies \eqref{eq:somehow25}:
  \begin{equation}\label{eq:somehow28}
    \big\| \tilde\Phi_{n+1}\big( \alpha_n(a) \big)(t) - \tilde\Phi'_n(a)(t) \big\| < 2\epsilon_n + \epsilon'_n, \qquad a \in \mathcal F_n,\ t \geq t_n.
  \end{equation}
  
  With $n \geq 1$ still fixed, let $a \in \mathcal F_n$ and $t \geq t_n$ be given.
  Let $m \geq n$ be such that $t_m \leq t < t_{m+1}$.
  Since $\alpha_n(\mathcal F_n) \subseteq \mathcal G_{n+1}$ and $\epsilon_n > \delta_{n+1}$, applying \eqref{eq:somehow8} inductively yields
  \begin{equation}\label{eq:somehow29}
    \big\| \tilde\Phi_{n+1}\big( \alpha_n(a) \big)(t) - \tilde\Phi_{m+1}\big(\alpha_{m+1, n}(a) \big)(t) \big\| < \epsilon_n'.
  \end{equation}
  Note that
  \begin{align}
    \tilde\Phi_{m+1}\big( \alpha_{m+1,n}(a) \big)(t)
    &\eqrel{$\overset{\eqref{eq:somehow12}}{=}$}
      \tilde h'_{m+1}\big( \alpha_{m+1, n}(a) \big)\Big(
      \frac{t - t_m}{t_{m+1}-t_m}\Big)
      \intertext{and}
      \tilde\Phi'_n(a)(t)
    &\eqrel{$\overset{\eqref{eq:somehow26}}{=}$}
      \tilde h_m\big( \alpha_{m, n}(a) \big)\Big(
      \frac{t - t_m}{t_{m+1}-t_m}\Big)
  \end{align}
  Since $\alpha_{m, n}(a) \in \mathcal F_m$ and $\epsilon_m \leq \epsilon_n$, the previous two equalities combine with \eqref{eq:somehow23} and \eqref{eq:somehow29} to produce \eqref{eq:somehow28}, which (finally) completes the proof.
\end{proof}

The rest of the proof closely follows the argument sketched in Section~\ref{sec:ahlp}.  One just needs to take care with the index shift arising from the relative homotopy lifting property and the several identifications being made in the sketch.  The following lemma is deigned to help with the latter.

\begin{lemma}
  \label{lem:X}
  Suppose $A_0$, $A$, $B$, and $E$ are $C^*$-algebras and $\alpha_0 \colon A_0 \rightarrow A$ and $\pi \colon E \rightarrow B$ are $^*$-homomorphisms with $\pi$ surjective.
  Suppose further that $(\alpha, I\pi)$ satisfies the homotopy lifting property.
  Let $X \subseteq I^2$ be such that there is a homeomorphism $f \colon I^2 \rightarrow I^2$ such that $f(X) = I \times \{0\}$.
  Then the diagram completion problem
  \begin{equation}\label{eq:X}
    \begin{tikzcd}
      A  \arrow[bend left]{ddrrr}{\tilde\nu} \arrow[bend right]{dddrr}[swap]{\mu}&[-10pt] &[-10pt] &[15pt] \\[-10pt]
      & A_0 \arrow{ul}[swap]{\alpha} \arrow[dashed]{dr}{\tilde\mu} & & \\[-10pt]
      & & I^2E \arrow{r}{\rho^E_X} \arrow{d}[swap]{I^2\pi} & XE \arrow{d}{X\pi} \\[15pt]
      & & I^2B \arrow{r}[swap]{\rho^B_X} & XB 
    \end{tikzcd}
  \end{equation}
  always has a solution $\tilde\mu$, where $\rho^B_X$ and $\rho^E_X$ are the restriction maps.
\end{lemma}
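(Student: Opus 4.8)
The plan is to reduce the diagram completion problem in \eqref{eq:X} to a single application of the homotopy lifting property of $(\alpha, I\pi)$, after identifying $I^2D$ with $I(ID)$ for $D \in \{E, B\}$ and transporting everything along $f$.

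First I would normalize $X$. Any two edges of the square $I^2$ are interchanged by a homeomorphism of $I^2$, so after composing $f$ with the coordinate flip $(a,b) \mapsto (b,a)$ if necessary we may assume $f(X) = \{0\} \times I$. Under the canonical isomorphism $I^2D = I(ID)$ in which the outer interval parametrizes the first coordinate of $I^2$, the restriction map $\rho^D_{\{0\}\times I} \colon I^2 D \to C(\{0\}\times I, D) \cong ID$ becomes $\mathrm{ev}_0^{ID}$, and $I^2\pi$ becomes $I(I\pi)$. Next, for each $C^*$-algebra $D$ the homeomorphism $f$ induces a $C^*$-algebra isomorphism $\Theta^D \colon I^2 D \to I^2 D$, $\zeta \mapsto \zeta \circ f^{-1}$, which is natural in $D$ (so $\Theta^B (I^2\pi) = (I^2\pi)\Theta^E$) and which carries $X$ to $\{0\}\times I$; concretely, $\Theta^D$ restricts to an isomorphism $\Theta^D_\partial \colon XD \to C(\{0\}\times I, D) \cong ID$ with $\rho^D_{\{0\}\times I}\Theta^D = \Theta^D_\partial \rho^D_X$ and $\Theta^B_\partial (X\pi) = (I\pi)\Theta^E_\partial$.

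Now I would transport the given data. Set $\tilde\phi = \Theta^E_\partial \tilde\nu \colon A \to IE$ and $\theta = \Theta^B \mu \colon A \to I(IB)$. Since the outer square of \eqref{eq:X} formed by $\tilde\nu$, $\mu$, $X\pi$, $\rho^B_X$ commutes, i.e.\ $(X\pi)\tilde\nu = \rho^B_X \mu$, the intertwining relations above give
\begin{equation}
  (I\pi)\tilde\phi = (I\pi)\Theta^E_\partial \tilde\nu = \Theta^B_\partial (X\pi)\tilde\nu = \Theta^B_\partial \rho^B_X \mu = \rho^B_{\{0\}\times I}\Theta^B\mu = \mathrm{ev}_0^{IB}\theta,
\end{equation}
so $\tilde\phi$ and $\theta$ are admissible data for the homotopy lifting problem for the pair $(\alpha, I\pi)$. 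Applying the hypothesis that $(\alpha, I\pi)$ has the homotopy lifting property yields $\tilde\theta \colon A_0 \to I(IE) = I^2 E$ with $\mathrm{ev}_0^{IE}\tilde\theta = \tilde\phi\alpha$ and $(I(I\pi))\tilde\theta = \theta\alpha$. Finally I would set $\tilde\mu = (\Theta^E)^{-1}\tilde\theta$; reversing the identifications, $\rho^E_X\tilde\mu = (\Theta^E_\partial)^{-1}\mathrm{ev}_0^{IE}\tilde\theta = (\Theta^E_\partial)^{-1}\tilde\phi\alpha = \tilde\nu\alpha$ and $(I^2\pi)\tilde\mu = (\Theta^B)^{-1}(I^2\pi)\tilde\theta = (\Theta^B)^{-1}\theta\alpha = \mu\alpha$, so $\tilde\mu$ solves \eqref{eq:X}.

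No step here is genuinely difficult: the whole content is that the diagram \eqref{eq:X} is, after a change of coordinates on $I^2$, literally the homotopy lifting diagram for $(\alpha, I\pi)$. The only thing requiring care is the bookkeeping of the homeomorphism-induced isomorphisms of the $I^2$-algebras, and in particular checking that they are natural in the coefficient algebra so that they intertwine $\pi$, $I^2\pi$, and $X\pi$ as claimed.
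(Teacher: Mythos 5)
Your proposal is correct and follows essentially the same route as the paper: conjugate the whole diagram by the isomorphisms of $I^2$-algebras induced by $f$ (the paper's $\tilde\eta_f^D$ and $\eta_f^D$ build in the coordinate swap you handle by composing $f$ with the flip), so that $\rho^D_X$ becomes $\mathrm{ev}_0$ on $I(ID)$, then apply the homotopy lifting property of $(\alpha, I\pi)$ once and transport the lift back. The only differences are cosmetic bookkeeping in how the identification $I^2D \cong I(ID)$ and the naturality checks are organized.
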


\begin{proof}
  For each C$^*$-algebra $D$, define isomorphisms \begin{equation}
  	\tilde\eta_f^D \colon I^2 D \xrightarrow\cong I(ID) \qquad \text{and} \qquad \eta_f^D \colon XD \xrightarrow\cong ID
  \end{equation}
  by
  \begin{align}
  	\tilde\eta_f^D(g)(s_1)(s_2) &= g\big(f^{-1}(s_2, s_1)\big), &&g \in I^2D,\ s_1, s_2 \in I,
  \intertext{and}
    \eta_f^D(g)(s) &= g\big(f^{-1}(s, 0)\big), && g \in XD,\ s \in I.
  \end{align}
  Then the diagram
  \begin{equation}
  	\begin{tikzcd}
      & I(IE) \arrow{rr}{\mathrm{ev}_0^{IE}} \arrow{dd}[pos = .2]{I(I\pi)} & & IE \arrow{dd}{I\pi} \\
      I^2 E \arrow[crossing over]{rr}[pos = .8,swap]{\rho_X^E} \arrow{ur}{\tilde\eta_f^E} \arrow{dd}[swap]{I^2\pi} & & XE \arrow{ur}[swap, pos = .4]{\eta_f^E} & \\
      & I(IB) \arrow{rr}[pos = .2, swap]{\mathrm{ev}_0^{IB}} & & IB \\
      I^2B \arrow{rr}{\rho_X^B} \arrow{ur}{\tilde\eta_f^B} & & XB \arrow{ur}[swap]{\eta_f^B} \arrow[leftarrow, crossing over]{uu}[swap, pos = .8]{X\pi} &		
    \end{tikzcd}
  \end{equation}
  commutes.  Indeed, the faces involving $\pi$ commute by the naturality of $\eta_f$, $\tilde\eta_f$, $\rho_X$, and $\mathrm{ev}_0$.  To see the commutativity of the top and bottom faces, note that if $D$ is a C$^*$-algebra $g \in I^2D$, and $s \in I$, then
  \begin{align}
  	\begin{split}
  	    \mathrm{ev}_0^{ID}\big(\tilde \eta_f^D(g)\big)(s)
  	  &= \tilde \eta_f^D(g)(0)(s) \\
  	  &= g\big( f^{-1}(s, 0) \big) \\
  	  &= \rho_X^D(g)\big(f^{-1}(s, 0)\big) \\
  	  &= \eta_f^D\big(\rho_X^D(g)\big)(s).
    \end{split}
  \end{align}

  Since $(\alpha, I\pi)$ satisfies the homotopy lifting property, there is a $^*$-homomorphism $\tilde\mu' \colon A_0 \rightarrow I(IE)$ such that
  \begin{equation}
  	\big(I(I\pi)\big) \tilde\mu' = \tilde\eta_f^B \mu \alpha \qquad \text{and} \qquad \mathrm{ev}_0^{IE} \tilde \mu' = \eta_f^E \tilde\nu \alpha.
  \end{equation}
  Define $\tilde \mu = (\tilde \eta_f^E)^{-1} \tilde\mu'$.
\end{proof}

We are now ready to prove the main asymptotic homotopy lifting theorem (Theorem~\ref{thm:ahlp}).  As already noted in Section~\ref{sec:ahlp}, this implies Theorem~\ref{thm:ahlp-asp} and~\ref{thm:ahlp-qd} from the introduction.

\begin{proof}[Proof of Theorem~\ref{thm:ahlp}]
  Suppose $\theta \colon A \xrightarrow\approx IB$ and $\tilde\phi \colon A \xrightarrow\approx E$ are asymptotic morphisms such that $\mathrm{ev}_0^B \theta \cong \pi \tilde\phi$.
  By Lemma~\ref{lem:somehow}, there are diagrammatic representations $(\underline \theta, \underline k, \underline t) \colon (\underline A, \underline \alpha) \rightarrow B$ and $(\underline{\tilde\phi}, \underline{\tilde h}, \underline t) \colon (\underline A, \underline \alpha) \rightarrow E$ of $\theta$ and $\tilde\phi$, respectively, such that
  \begin{equation}
    \label{eq:ahlp.1}
    (\mathrm{ev}_0^B)_*(\underline \theta, \underline k, \underline t) = \pi_*(\underline{\tilde \phi}, \underline{\tilde h}, \underline t).
  \end{equation}
	
  Note that $\mathrm{ev_0^B} \theta_{n+2} = \pi \tilde\phi_{n+2}$.
  As $(\alpha_{n+1}, \pi)$ satisfies the homotopy lifting property by hypothesis, there is a $^*$-homomorphism $\tilde\theta_{n+1}' \colon A_{n+1} \rightarrow IE$ such that
  \begin{equation}
    \label{eq:ahlp.2}
    \mathrm{ev}_0^E \tilde\theta'_{n+1} = \tilde\phi_{n+2} \alpha_{n+1} \quad\text{and}\quad (I\pi) \tilde\theta'_{n+1} = \theta_{n+2}\alpha_{n+1}.
  \end{equation}
  Define
  \begin{equation}
    \label{eq:ahlp.3}
    X = \big\{ (s_1, s_2) \in I \times I : s_1 = 0,\ s_1 = 1,\ \text{or}\ s_2 = 0 \big\}
  \end{equation}
  and note that there is a homeomorphism $f \colon I^2 \rightarrow I^2$ such that $f(X) = I \times \{0\}$.
  Then define $\tilde \nu_{n+1} \colon A_{n+1} \rightarrow XE$ by
  \begin{equation}
    \label{eq:ahlp.4}
    \tilde \nu_{n+1}(a)(s_1, s_2) =
    \begin{cases}
      \tilde\theta'_{n+1}(a)(s_2) & s_1 = 0 \\
      \tilde\theta'_{n+2}\big(\alpha_{n+1}(a)\big)(s_2) & s_1 = 1 \\
      \tilde h_{n+2}\big(\alpha_{n+1}(a)\big)(s_1) & s_2 = 0
    \end{cases}
  \end{equation}
  and define $\mu_{n+1} \colon A_{n+1} \rightarrow I^2B$ by $\mu_{n+1}(a)(s_1, s_2) = k_{n+2}(\alpha_{n+1}(a))(s_1)(s_2)$.
	
  Let $n \geq 1$ and $a \in A_{n+1}$ be given. For $s_2 \in I$, the definition of $\mu_{n+1}$, the fact that $(\underline{\theta}, \underline{k}, \underline{t})$ is a diagrammatic representation of $\theta$, \eqref{eq:ahlp.2}, and \eqref{eq:ahlp.4} give
  \begin{align}
    \begin{split}
      \mu_{n+1}(a)(0, s_2) &= k_{n+2}\big(\alpha_{n+1}(a)\big)(0)(s_2) \\ &=
      \theta_{n+2}\big(\alpha_{n+1}(a)\big)(s_2) \\ &= \pi\big(\tilde\theta_{n+1}'(a)(s_2)\big) \\ &= \pi\big(\tilde\nu_{n+1}(a)(0, s_2)\big),
    \end{split}
    \intertext{and, for the same reasons,}
    \begin{split}
      \mu_{n+1}(a)(1,s_2) 
      &= k_{n+2}\big(\alpha_{n+1}(a)\big)(1)(s_2) \\ 
      &= \theta_{n+3}\big(\alpha_{n+3, n+1}(a)\big)(s_2)
      \\ 
      &= \pi\big(\tilde\theta'_{n+2}\big(\alpha_{n+1}(a)\big)(s_2)\big) \\ 
      &= \pi\big(\tilde\nu_{n+1}(a)(1,s_2)\big)
    \end{split}
    \intertext{Moreover, the definition of $\mu_{n+1}$, \eqref{eq:ahlp.1}, and \eqref{eq:ahlp.4} imply that for all $s_1 \in I$, we have}
    \begin{split}
      \mu_{n + 1}(a)(s_1, 0) &= k_{n+2}\big(\alpha_{n+1}(a)\big)(s_1)(0) \\ &=  \pi\big(\tilde h_{n+2}\big(\alpha_{n+1}(a)\big)(s_1)\big) \\ &= \pi\big(\tilde \nu_{n+1}(a)(s_1, 0)\big).
    \end{split}
  \end{align}
  The three computations show that $\rho^B_X \mu_{n+1} = (X\pi)\tilde\nu_{n+1}$.
	
  By Lemma~\ref{lem:X}, for all $n \geq 1$, there is a $^*$-homomorphism $\tilde\mu_n \colon A_n \rightarrow I^2E$ such that
  \begin{equation}
    \label{eq:ahlp.5}
    \rho^E_X \tilde \mu_n = \tilde \nu_{n+1} \alpha_n \qquad\text{and}\qquad (I^2\pi) \tilde\mu_n = \mu_{n+1} \alpha_n.
  \end{equation}
  For $n \geq 1$, define $\tilde \theta_n \colon A_n \rightarrow IE$ and $\tilde k_n \colon A_n \rightarrow I(IE)$ by
  \begin{align}
    \tilde \theta_n(a)(s) &= \tilde\theta'_{n+1}\big(\alpha_n(a)
    \big)(s), &&a \in A_n, s \in I,\label{eq:ahlp.6}
                                                                     \intertext{and}
                                                                     \tilde k_n(a)(s_1)(s_2) &= \tilde\mu_n(a)(s_1, s_2), &&a \in A_n,\ s_1, s_2 \in I.\label{eq:ahlp.7}
  \end{align}
  Fix $n \geq 1$.  Then \eqref{eq:ahlp.6}, \eqref{eq:ahlp.5}, \eqref{eq:ahlp.4}, and \eqref{eq:ahlp.6} give
  \begin{align}
    \begin{split}
      \tilde k_n(a)(0)(s) 
      &= \tilde\mu_n(a)(0, s) \\
      &= \tilde \nu_{n+1}\big(\alpha_n(a)\big)(0, s) \\
      &= \tilde\theta'\big(\alpha_n(a)\big)(s) \\
      &= \tilde\theta_n(a)(s)
    \end{split}
    \intertext{for all $a \in A_n$ and $s \in I$.
    Thus $(I\mathrm{ev}^{IE}_0) \tilde k = \tilde \theta_n$.
    Further, for the same reasons,}
	\begin{split}
          \tilde k_n(a)(1)(s) 
          &= \tilde \mu_n(a)(1, s) \\
          &= \tilde \nu_{n+1}\big(\alpha_n(a)\big)(1,s) \\
          &= \tilde \theta'_{n+2}\big(\alpha_{n+2, n}(a)\big)(s) \\
          &= \tilde \theta_{n+1}\big(\alpha_n(a)\big)(s)
	\end{split}
  \end{align}
  for all $a \in A_n$ and $s \in I$.
  Therefore, $(I\mathrm{ev}_1^{IE})\tilde k_n = \tilde\theta_{n+1} \alpha_n$.
  Let $\underline t_{+2}$ denote the sequence $(t_{n+2})_{n=1}^\infty$.
  Then $(\underline{\tilde \theta}, \underline{\tilde k}, \underline t_{+2}) \colon (\underline A, \underline \alpha) \rightarrow IE$ is a diagrammatic representation for an asymptotic morphism $\tilde \theta \colon A \xrightarrow\approx IE$.
	
  For $n \geq 1$, $a \in A_n$, and $s_1, s_2 \in I$, by \eqref{eq:ahlp.7}, \eqref{eq:ahlp.5}, and the definition of $\mu_{n+1}$, we have
  \begin{equation}\label{eq:ahlp.8}
    \begin{split}
      \pi\big(\tilde k_n(a)(s_1)(s_2)\big) 
      &= \pi\big(\tilde \mu_n(a)(s_1, s_2)\big) \\
      &= \mu_{n+1}\big(\alpha_n(a)\big)(s_1, s_2) \\
      &= k_{n+2}\big(\alpha_{n+2, n}(a)\big)(s_1)(s_2),
    \end{split}
  \end{equation}
  and hence $(I(I\pi)) \tilde k_n = k_{n+2}\alpha_{n+2, n}$.
  We also have $(I\pi) \tilde \theta_n = \theta_{n+2} \alpha_{n+2, n}$ by applying \eqref{eq:ahlp.8} with $s_1 = 0$.
  Therefore, $\pi_*(\underline{\tilde \theta}, \underline{\tilde k}, \underline t_{+2}) = (\underline \theta, \underline k, \underline t)_{+2}$.
  By Lemma~\ref{lem:shift}, $\pi \tilde \theta \cong \theta$.
	
  Similarly, note that for all $n \geq 1$, $a \in A_n$, and $s \in I$, we have
  \begin{equation}\label{eq:ahlp.9}
    \begin{split}
      (I\mathrm{ev}^E_0)\big(\tilde k_n(a)\big)(s)
      &= \tilde k_n(a)(s)(0) \\
      &= \tilde \mu_n(a)(s, 0) \\
      &= \tilde \nu_{n+1}\big(\alpha_n(a)\big)(s, 0) \\
      &= \tilde h_{n+2}\big(\alpha_{n+2, n}(a)\big)(s),
    \end{split}
  \end{equation}
  and hence $(I\mathrm{ev}^{E}_0) \tilde k_n = \tilde h_{n+2} \alpha_{n+2, n}$.
  Taking $s_1 = 0$ in \eqref{eq:ahlp.9} implies that $\mathrm{ev}^E_0 \tilde \theta_n = \tilde \phi_{n+2}\alpha_{n+2, n}$.
  Thus $(\mathrm{ev}^E_0)_*(\underline{\tilde \theta}, \underline{\tilde k}, \underline t_{+2}) = (\underline{\tilde \phi}, \underline{\tilde h}, \underline t)_{+2}$.
  Another application of Lemma~\ref{lem:shift} implies that $\mathrm{ev}^E_0 \tilde\theta \cong \tilde\phi$, which completes the proof.
\end{proof}

% \bibliography{ahlp}
% \bibliographystyle{abbrv}

\end{document}